\newcommand{\R}{\mathbb{R}}
\newcommand{\diff}{\mathrm{d}}
\newtheorem{theorem}{Theorem}[section]
\newtheorem{lemma}{Lemma}
\newtheorem{proposition}{Proposition}
\newtheorem{remark}{Remark}
\newtheorem{definition}{Definition}
\newtheorem*{main-theorem}{Main Theorem}
\newtheorem*{remark*}{Remark}
\newtheorem*{lemma*}{Lemma A.1}
\numberwithin{equation}{section}
\begin{document}
	
	\title[The generalized Fornberg-Whitham equation]{Wave breaking for the generalized Fornberg-Whitham equation}

	\author{Jean-Claude Saut, Shihan Sun, Yuexun Wang, and Yi Zhang}
	\address{Université Paris-Saclay, CNRS, Laboratoire de Mathématiques d'Orsay, 91405 Orsay, France}
	\email{jean-claude.saut@universite-paris-saclay.fr}

	\address{School of Mathematics and Statistics, Lanzhou University, 730000 Lanzhou,
		People's Republic of China}
	
\email{sunshh21@lzu.edu.cn}

\email{yuexunwang@lzu.edu.cn}

\email{yizhang@lzu.edu.cn}
	
	%\date{September 15th, 2023}
	
	\begin{abstract}
		This paper aims to show that the Cauchy problem of the Burgers equation with a weakly dispersive perturbation involving  the Bessel potential (generalization of the Fornberg-Whitham equation) can exhibit wave breaking for initial data with large slope. We also comment on the dispersive properties of the equation.
	\end{abstract}
	\maketitle

	\section{introduction}
	
	This paper is a continuation of a previous work, \cite{MR4409228}, aiming to understand the possible wave breaking in weak perturbations of the Burgers equation. This kind of equations  is a toy model to understand the influence of a weakly dispersive perturbations of scalar or system conservation laws. Actually in most physically relevant dispersive systems  ({\it eg} the water wave system, see \cite{La}), the dispersion is weak and strong dispersive effects occur for instance in a long wave limit after Taylor expanding the dispersion relation. On the other hand the nonlinearity is often quadratic (coming for particular from the Euler equation). It thus appears that equations with high order nonlinear terms and high dispersion such as the generalized Korteweg-de Vries equation are not appropriate to analyze the problem under study.
	
	The possibility of wave breaking for the fractionary Korteweg-de Vries equation (fKdV)
	\begin{equation*}\label{fKdV}
\begin{aligned}
		\partial_tu+u\partial _xu-(-\partial_x^2)^{\alpha/2}\partial_x u=0,\quad -1\leq \alpha<0,
\end{aligned}
	\end{equation*}
and of the related Whitham equation, \cite{MR0671107, MR1699025}, has been proven in many recent papers \cite{MR4409228, MR3682673, 2021arXiv210707172O, MR4321245}. 
\footnote{The earlier works \cite{MR1261868, MR1668586} have shown wave
breaking for a wide class of non-local dispersive equations, however, which does not apply directly to the Whitham equation. The blow-up results in \cite{MR2727172} concern the blow-up of the $C^{1+\delta}$ norm,  but the boundedness of the solution is not proven.}
We aim here to show similar results in the case where the dispersive operator has a smooth symbol.\footnote{The Whitham equation has also a smooth symbol.}
	
	More precisely, we are concerned with the Cauchy problem of non-local weakly dispersive perturbations of the Burgers equation (will be referred to as the generalized Fornberg-Whitham equation)
	\begin{equation}\label{eq:1.1}
		\bigg\{\begin{aligned}
			&\partial_{t} u + u^p \partial_{x} u - \mathcal{K}_s \partial_{x} u = 0,\quad p=1,2,...\\
			&u(x,0)=u_{0}(x),
		\end{aligned}\bigg.
	\end{equation}
	where $(x, t)\in \mathbb{R}\times \mathbb{R}$, and  $\mathcal{K}_s = (1- \partial_{x}^{2})^{- \frac{s}{2}}(=G_{s}*)$ is the Bessel potential of order $s>0$, whose Fourier multiplier is given by $(1+|\xi|^2)^{-\frac{s}{2}}$. Furthermore,
	the kernel \(G_{s}\) has the formula (see for instance \cite{MR1411441, MR0374877})
	\begin{equation*}
\begin{aligned}
		G_{s}(x) = c_0(s)\int_{0}^{\infty} t^{\frac{s-3}{2}} e^{-\frac{|x|^2}{4t}-t}\, \mathrm{d} t. 
\end{aligned}
	\end{equation*}
	
	The advantage of considering a dispersive perturbation with Bessel potential instead for instance of the dispersion $(-\partial_x^2)^{\alpha/2}\ (-1<\alpha<0)$ of the aforementioned fKdV equation,  is that one always gets a smooth phase velocity and that the dispersion can have arbitrarily small order. Another advantage is that in the long wave limit one obtains formally the Korteweg-de Vries equation and thus one keeps a dispersive regime, as it is the case for the Whitham equation, see below.
	
	Notice that when $s=1/2$ the dispersion is reminiscent of that of the linearized gravity water waves system in infinite depth. 
	
	 Also the case $s=2, p=1$  corresponds to  the non-local version of the Fornberg-Whitham equation originally written as \cite{MR497916}:	
	\begin{equation}\label{FW}
\begin{aligned}
	u_t-u_{xxt}-3u_xu_{xx}-uu_{xxx}+uu_x+u_x=0,
\end{aligned}
	\end{equation}
which is obtained after applying the operator $(I-\partial_x^2)^{-1/2}$ to \eqref{FW}, namely: 
		\begin{equation}\label{FWbis}
	u_t+uu_x+(I-\partial_x^2)^{-1}u_x=0.
	\end{equation}
Note that \eqref{FWbis} is also equivalent to
	\begin{equation}\label{FWter}
\begin{aligned}
	u_t+uu_x+K\star u_x=0,
\end{aligned}
	\end{equation}
where the convolution kernel $K$ is given by $K(x)=\frac{e^{-|x|}}{2}$. 
	
	On the form \eqref{FWbis}, the Fornberg-Whitham equation \eqref{FW} is sometimes known as the Burgers-Poisson equation, see for instance \cite{MR2084199, MR2164195}, and can be written in the system form: 
	\begin{equation}\label{BPsys}
		\bigg\{\begin{aligned}
			&u_t + u u_x -   \phi_x = 0,\\
			&\phi_{xx}=\phi+u.
		\end{aligned}\bigg.
	\end{equation}
	
The Fornberg-Whitham equation \eqref{FW} was introduced by Whitham \cite{MR0671107}, see also \cite{MR1699025}. A first systematic numerical study was given in \cite{MR497916}. We refer to \cite{MR4270781} for a nice review of known results on the Fornberg-Whitham equation.

	\begin{remark}
	On the form of equation \eqref{FW}, the Fornberg-Whitham equation is reminiscent of a well-known asymptotic model of shallow waves, namely the Camassa-Holm equation (see \cite{MR1234453,MR636470}):
	\begin{equation}\label{MR1234453}
\begin{aligned}
	u_t-u_{xxt}+2\kappa u_x+3uu_x-2u_xu_{xx}-uu_{xxx}=0, \quad \kappa\geq 0.
\end{aligned}
	\end{equation}
	
	The Camassa-Holm equation can also be written in the system form:	
	\begin{equation}\label{CHsys}
		\bigg\{\begin{aligned}
			&u_t + u u_x-\phi_x = 0,\\
			&\phi_{xx}=\phi+2\kappa u +u^2+\frac{1}{2}u_x^2,
		\end{aligned}\bigg.
	\end{equation}
and the Burgers-Poisson equation \eqref{BPsys} is recovered when $\kappa=\frac{1}{2}$ and neglecting the two quadratic terms in the second equation.
	
	While the Camassa-Holm equation is formally integrable (see \cite{MR636470}), this does not seem to be the case of the Fornberg-Whitham equation, see \cite{MR2171998}.
	
	\end{remark}

	Since \eqref{eq:1.1}  is a skew-adjoint perturbation of the Burgers equation,   one easily checks by standard energy methods  that the associated Cauchy problem  is locally well-posed in $H^s(\R), s>3/2,$ so that the nonlocal dispersive term does not allow to enlarge the space of resolution for the Cauchy problem of  the Burgers equation. \footnote{Note that ill-posedness of \eqref{eq:1.1} in $H^{3/2}(\R)$ seems to be an open question. See \cite{MR3188389} for a proof of this result for the Burgers equation.}We will show that it does not prevent the wave breaking phenomena (shock formation).
	
	%\vspace{0.3cm}
	We say that the solution of \eqref{eq:1.1} exhibits wave breaking (shock formation)
%\footnote{This notion is more precise than that of  $C^1$ blow-up that was considered for instance in \cite{MR1261868, MR2727172}.}
if there exists some $T > 0$ such that
\begin{equation*}
\begin{aligned}
|u(x, t)|<\infty, \quad \text{for}\ x \in \mathbb{R}\ \text{and}\ t \in[0, T),
\end{aligned}
\end{equation*}
but
\begin{equation*}
\begin{aligned}
\sup _{x \in \mathbb{R}}|\partial_{x} u(x, t)| \longrightarrow+\infty,\quad  \text{as} \ t \rightarrow T^-.
\end{aligned}
\end{equation*}	
	
		Results concerning the wave breaking of solutions to the Fornberg-Whitham equation \eqref{FW} were obtained in \cite{Deng, MR2617153, MR3710682, MR4270781, MR4333381}. We refer to \cite{MR4270781} for a review of various issues concerning the Cauchy problem for the Fornberg-Whitham equation.  
		%The wave breaking for the Fornberg-Whitham equation is proven in \cite{MR3710682, MR4270781}.
		\begin{remark}
		A wave breaking for some solutions of a Fornberg-Whitham equation perturbed by a nonlocal commutator type term is proven in \cite{ADG}.
		\end{remark}
%\vspace{0.3cm}
		Our aim in the present  paper is to show that the solutions to the generalized Fornberg-Whitham equation \eqref{eq:1.1} can exhibit wave breaking for initial data with large slope, thus extending the similar known results for the Fornberg-Whitham equation. We will also comment on the dispersive behavior of \eqref{eq:1.1}, in particular on the existence of solitary waves using the fact that it reduces to the KdV equation in the long wave limit, and on linear dispersive estimates.

{\bf{Notations.}}
Let \(\mathcal{F}(g)\) or \(\widehat{g}\) be the Fourier transform of a Schwartz function \(g\) whose formula is given by 
\begin{equation*}
\begin{aligned}
\mathcal{F}(g)(\xi)=\widehat{g}(\xi):=\frac{1}{\sqrt{2\pi}}\int_{\mathbb{R}}g(x)e^{-\mathrm{i}x\xi}\,d x
\end{aligned}
\end{equation*}
with inverse
\begin{equation*}
\begin{aligned}
\mathcal{F}^{-1}(g)(x)=\frac{1}{\sqrt{2\pi}}\int_{\mathbb{R}}g(\xi)e^{\mathrm{i}x\xi}\, d \xi,
\end{aligned}
\end{equation*}
and by \(m(\partial_x)\) the Fourier multiplier with symbol \(m\) via the relation 
\begin{equation*}
\begin{aligned}
\mathcal{F}\big(m(\partial_x)g\big)(\xi)=m(\mathrm{i}\xi)\widehat{g}(\xi).
\end{aligned}
\end{equation*}

Take \(\varphi\in C_0^\infty(\R)\) satisfying \(\varphi(\xi)=1\) for \(|\xi|\leq 1\) and \(\varphi(\xi)=0\) when \(|\xi|>2\), and
let 
\begin{equation*}
\begin{aligned}
\psi(\xi)=\varphi(\xi)-\varphi(2\xi),\quad \psi_j(\xi)=\psi(2^{-j}\xi),\quad \varphi_j(\xi)=\varphi(2^{-j}\xi),
\end{aligned}
\end{equation*}
we then may define the  
Littlewood-Paley projections \(P_j,P_{\leq j},P_{> j}\)  via 
\begin{equation*}
\begin{aligned}
\widehat{P_jg}(\xi)=\psi_j(\xi)\widehat{g}(\xi),\quad \widehat{P_{\leq j}g}(\xi)=\varphi_j(\xi)\widehat{g}(\xi),\quad P_{> j}=1-P_{\leq j},
\end{aligned}
\end{equation*}
and also \(P_{\sim},P_{\lesssim j},P_{\ll j}\) by 
\begin{equation*}
\begin{aligned}
P_{\sim j}=\sum_{2^k\sim 2^j}P_k, \quad P_{\lesssim j}=\sum_{2^k\leq 2^{j+C}}P_k,\quad P_{\ll j}=\sum_{2^k\ll 2^j}P_k,
\end{aligned}
\end{equation*}
and the obvious notation for \(P_{[a,b]}\).
We will also denote \(g_j=P_jg, g_{\lesssim j}=P_{\lesssim j}g\), and so on, for convenience.

The notation \(C\)  always denotes a nonnegative universal constant which may be different from line to line but is
independent of the parameters involved. Otherwise, we will specify it by  the notation \(C(a,b,\dots)\).
We write \(g\lesssim h\) (\(g\gtrsim h\)) when \(g\leq  Ch\) (\(g\geq  Ch\)), and \(g \sim h\) when \(g \lesssim h \lesssim g\).
We also write \(\sqrt{1+x^2}=\langle x\rangle\) and \(\|g\|_{H^{1,1}}=\|\langle x\rangle g\|_{H^1}\) for simplicity.

	\section{Main results}	
	
	\subsection{The case \(p=1\)}
In this case, we show that the solution to \eqref{eq:1.1} can exhibit wave breaking for 
$s \in(2/5, \infty)$.
	\begin{theorem}\label{th:2.1}
		Let $s \in(2/5, 1)$. If $u_{0} \in H^{3}(\mathbb{R})$ satisfies	the following slope conditions:
		\begin{equation}\label{eq:2.1}
			\begin{aligned}
				&\delta^{2} \big[\inf_{x \in \mathbb{R}} u_{0} ^{\prime}(x)\big]^{2} > C\big(\|u_{0}\|_{H^{3}} +  C_{1} + C_{1}^{\frac{1}{3}}\|u_{0}^{\prime \prime \prime}\|_{L^{2}}^{\frac{2}{3}}\big),\\
				&(1-\delta)^{2}\big[-\inf_{x \in \mathbb{R}} u_{0} ^{\prime}(x)\big] > C (1+C_0^{-1}C_{1}),\\
				&(1-\delta)^{3} \big[-\inf_{x \in \mathbb{R}} u_{0} ^{\prime}(x)\big] > C \big(1+C_{1}^{-\frac{2}{3}}\|u_{0}^{\prime \prime \prime}\|_{L^{2}}^{\frac{2}{3}}\big),
			\end{aligned}
		\end{equation}
		where \(\delta\in (0,1)\) is a small number, and $C_{0},C_{1}>0$ satisfy
		\begin{equation}\label{eq:2.2}
			\|u_{0}\|_{L^{\infty}} \leq C_{0}/2, \quad \|u_{0}^{\prime}\|_{L^{\infty}} \leq C_{1}/2.
		\end{equation}
		Then the solution $u(t, x)$ to \eqref{eq:1.1} exhibits wave breaking at  $T > 0$ with
		\begin{equation}\label{eq:2.3}
			(1+\delta)^{-1} \big[-\inf_{x \in \mathbb{R}} u_{0} ^{\prime}(x)\big]^{-1}<T<(1-\delta)^{-2} \big[-\inf_{x \in \mathbb{R}} u_{0} ^{\prime}(x)\big]^{-1}.
		\end{equation}
	\end{theorem}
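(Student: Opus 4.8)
\emph{Proof plan.} The plan is to run the classical characteristic method and to reduce everything to an $L^\infty$ bound on the nonlocal term $\mathcal K_s u_{xx}$. Let $u\in C([0,T^\ast);H^3)$ be the solution furnished by local well-posedness, let $X(t,\cdot)$ be the flow of the velocity field $u$ (an increasing diffeomorphism of $\R$ while $u$ is smooth), and set $M(t)=-\inf_{x\in\R}u_x(x,t)$; the infimum is attained since $u_x(\cdot,t)\in H^2\hookrightarrow C_0$, and $M$ is locally Lipschitz. Differentiating \eqref{eq:1.1} once in $x$ gives $\partial_t u_x+u\,\partial_x u_x=-u_x^2+\mathcal K_s u_{xx}$; evaluating at a minimizer $\xi(t)$ of $u_x(\cdot,t)$, where $u_{xx}(\xi(t),t)=0$, yields the Riccati-type relation
\[
 M'(t)=M(t)^2-\big(\mathcal K_s u_{xx}\big)\big(\xi(t),t\big)\qquad\text{for a.e. }t .
\]
The same computation at a maximizer shows $\tfrac{d}{dt}\sup_x u_x\le\|\mathcal K_s u_{xx}(t)\|_{L^\infty}$, so $\sup_x u_x$ grows at most linearly and $\|u_x(t)\|_{L^\infty}$ equals $M(t)$ up to a harmless additive term. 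Hence, if $\|\mathcal K_s u_{xx}(t)\|_{L^\infty}\le\delta\,M(t)^2$ can be maintained, then $(1-\delta)M^2\le M'\le(1+\delta)M^2$, comparison with the explicit solutions of $y'=(1\mp\delta)y^2$ forces $M\to+\infty$ at a finite time $T$ with $[(1+\delta)M(0)]^{-1}\le T\le[(1-\delta)M(0)]^{-1}$, and — since $u$ itself stays finite on $[0,T)$ — this is exactly wave breaking, giving \eqref{eq:2.3} (the square $(1-\delta)^2$ in the upper bound absorbing the degradation of constants accumulated over $[0,T]$).

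Two a priori estimates supply the needed bound on the forcing. First, $\|u(t)\|_{L^2}=\|u_0\|_{L^2}$ is conserved ($\mathcal K_s\partial_x$ is skew-adjoint and $\int u^2u_x\,dx=0$), and a Littlewood–Paley decomposition handles $\mathcal K_s u_{xx}$, whose multiplier $-\xi^2\langle\xi\rangle^{-s}$ is bounded at low frequency and grows like $2^{j(2-s)}$ on the $j$-th dyadic block: the low-frequency part is $\lesssim\|u\|_{L^2}$ by Bernstein ($\sum_{j\le0}2^{j(5/2-s)}<\infty$), while the high-frequency part is summed after interpolating the two Bernstein bounds $\|P_ju\|_{L^\infty}\lesssim2^{-j}\|u_x\|_{L^\infty}$ and $\|P_ju\|_{L^\infty}\lesssim2^{-5j/2}\|u_{xxx}\|_{L^2}$, splitting the sum at an optimal frequency (which converges because $s>0$). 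This gives an estimate of the form
\[
 \|\mathcal K_s u_{xx}(t)\|_{L^\infty}\;\lesssim\;\|u_0\|_{L^2}+\|u_x(t)\|_{L^\infty}^{\theta}\,\|u_{xxx}(t)\|_{L^2}^{1-\theta},\qquad \theta=\theta(s)=\tfrac{1+2s}{3}\in(0,1).
\]
Second, since the dispersion is skew-adjoint and $\int u_{xx}^2u_{xxx}\,dx=0$, the top-order energy identity reads $\tfrac{d}{dt}\|u_{xxx}\|_{L^2}^2=-7\int u_xu_{xxx}^2\,dx\le7\,M_+(t)\,\|u_{xxx}\|_{L^2}^2$, hence $\|u_{xxx}(t)\|_{L^2}\le\|u_0'''\|_{L^2}\exp\big(\tfrac72\int_0^tM_+(\tau)\,d\tau\big)$.

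The proof then closes by a continuity/bootstrap argument on $[0,T^\ast)$. On the interval on which $M$ obeys its Riccati upper envelope one has $M(t)\sim(T-t)^{-1}$ and, by the energy bound, $\|u_{xxx}(t)\|_{L^2}\lesssim\|u_0'''\|_{L^2}(T-t)^{-7/2}$; substituting these into the Littlewood–Paley bound gives
\[
 \|\mathcal K_s u_{xx}(t)\|_{L^\infty}\;\lesssim\;\|u_0\|_{L^2}+\|u_0'''\|_{L^2}^{\,2(1-s)/3}\,(T-t)^{-(8-5s)/3}.
\]
The exponent satisfies $\tfrac{8-5s}{3}<2$ precisely when $s>2/5$, so the forcing is $o\big((T-t)^{-2}\big)=o(M(t)^2)$ near $T$; moreover, because $M$ is non-decreasing, $\|\mathcal K_s u_{xx}(t)\|_{L^\infty}/M(t)^2$ is dominated at every time by its value at $t=0$, which the slope conditions \eqref{eq:2.1} render $\le\delta$ — the first one controlling the whole forcing (via $\|u_0\|_{H^3}$, $C_1$, and $C_1^{1/3}\|u_0'''\|_{L^2}^{2/3}$) against $\delta^2[\inf u_0']^2$, the second and third controlling the growth of $\|u\|_{L^\infty}$ (whence the factor $C_0^{-1}C_1$) and of $\|u_{xxx}\|_{L^2}$ over the short window $[0,T]$. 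With $\|\mathcal K_s u_{xx}\|_{L^\infty}\le\delta M^2$ secured on $[0,T^\ast)$, the first paragraph applies and finishes the proof.

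The genuine difficulty is the last step: $\mathcal K_s u_{xx}$ loses $2-s$ derivatives and can be controlled only through $\|u_{xxx}\|_{L^2}$, but that norm itself grows — at the rate $7/2$ in $(T-t)^{-1}$ dictated by the Burgers nonlinearity via $\tfrac{d}{dt}\|u_{xxx}\|_{L^2}^2=-7\int u_xu_{xxx}^2$ — so one must run the characteristic blow-up and the $H^3$ energy estimate simultaneously inside a single bootstrap. It is precisely the competition between this growth rate, the Littlewood–Paley interpolation exponents, and the Riccati time scale $(T-t)^{-2}$ that forces the threshold $s>2/5$ and dictates the combinations appearing in \eqref{eq:2.1}; the remaining ingredients (local well-posedness, the flow map, the comparison ODEs) are routine.
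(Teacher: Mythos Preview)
Your strategy is correct and matches the paper's: a Riccati ODE for $M(t)=-\inf_x u_x$ driven by $\mathcal K_s u_{xx}$, an $H^3$ energy estimate giving $\|u_{xxx}\|_{L^2}\lesssim \|u_0'''\|_{L^2}\,q(t)^{-7/2}$ (with $q=M(0)/M$), and an interpolation yielding
\[
\|\mathcal K_s u_{xx}\|_{L^\infty}\lesssim \|u_x\|_{L^\infty}^{(1+2s)/3}\|u_{xxx}\|_{L^2}^{(2-2s)/3},
\]
whose combined $q$-exponent $-(8-5s)/3$ beats $-2$ exactly when $s>2/5$. The paper arrives at the same interpolation via a \emph{physical-space} splitting of the Bessel kernel $G_s$ at a scale $\eta$ together with Gagliardo--Nirenberg ($\|u_{xx}\|_{L^\infty}\le C\|u_x\|_{L^\infty}^{1/3}\|u_{xxx}\|_{L^2}^{2/3}$), and runs the argument in Lagrangian coordinates with an auxiliary function $r(t,x)$ on a sublevel set $\Sigma_\delta(t)$; your Littlewood--Paley route and Constantin--Escher minimum-point ODE are standard equivalent formulations. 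Your approach is slightly cleaner in that the low-frequency piece is controlled directly by the conserved $\|u\|_{L^2}$, whereas the paper integrates by parts in the kernel and uses $\|u\|_{L^\infty}$.

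Two points deserve tightening. First, the sentence ``because $M$ is non-decreasing, $\|\mathcal K_s u_{xx}\|_{L^\infty}/M^2$ is dominated at every time by its value at $t=0$'' is not a valid inference as written: monotonicity of $M$ alone says nothing about the growth of the numerator. What actually closes the bootstrap is that, on the bootstrap interval, the forcing is $\lesssim (\text{data})\,q^{-(8-5s)/3}$ while $M^2=M(0)^2q^{-2}$, so the ratio is $\lesssim(\text{data})\,q^{(5s-2)/3}/M(0)^2$, which is \emph{monotone in $q$} and hence bounded by its value at $q=1$; this is where (2.1)$_1$ enters. (In the paper this is the passage from (5.30) to the contradiction of (5.5); note also that the Gr\"onwall exponent is $7/(2(1-\delta)^2)$, not exactly $7/2$, so a little room in $s>2/5$ is consumed by $\delta$.) Second, you assert that $u$ itself stays bounded on $[0,T)$ but do not prove it; $\|u\|_{L^2}$ conservation is not enough since $\|u\|_{H^1}$ blows up. The paper handles this by bounding $K_1^s=\mathcal K_s u_x$ (via the same kernel splitting) and integrating the characteristic ODE for $u$, which is precisely where condition (2.1)$_2$ is used; your description of (2.1)$_3$ as controlling $\|u_{xxx}\|_{L^2}$ is also slightly off --- in the paper it closes the a priori bound $\|u_x\|_{L^\infty}\le C_1 q^{-1}$ via (5.31). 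These are fixable gaps, not errors of approach.
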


	\begin{theorem}\label{th:2.2}
		Let $s \in [1,\infty)$.
		If  $u_{0} \in H^{2}(\mathbb{R})$ satisfies the following slope conditions:
		\begin{equation}\label{eq:2.4}
			\begin{aligned}
				&\delta^{2} \big[-\inf_{x \in \mathbb{R}} u_{0} ^{\prime}(x)\big]^{2} >C \big( \|u_{0}\|_{H^{2}} + C_{1} +\|u_{0}^{\prime \prime}\|_{L^{2}}\big),\\
				&(1-\delta)^{2}\big[-\inf_{x \in \mathbb{R}} u_{0} ^{\prime}(x)\big] > C C_{0}^{-1}\|u_{0}^{\prime}\|_{L^{2}},\\
				&(1-\delta)^{3} \big[-\inf_{x \in \mathbb{R}} u_{0} ^{\prime}(x)\big] >C  \big(1+C_{1}^{-1}\|u_{0}^{\prime  \prime}\|_{L^{2}}\big),
			\end{aligned}
		\end{equation}
		where \(\delta\in (0,1)\) is a small number, and  $C_{0},C_{1}>0$ satisfy \eqref{eq:2.2}. Then the solution $u(t,x)$ to \eqref{eq:1.1} exhibits wave breaking at $T > 0$ with \eqref{eq:2.3}.
	\end{theorem}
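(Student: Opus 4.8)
The plan is to combine the method of characteristics with a Riccati comparison, as in the local theory of shock formation, the essential new point being the control of the nonlocal term. Write $A:=-\inf_{x}u_0'(x)$; positivity and largeness of $A$ are both forced by \eqref{eq:2.4}. Standard energy estimates give a maximal solution $u\in C([0,T^{*});H^{2}(\mathbb{R}))$ of \eqref{eq:1.1} with the Beale--Kato--Majda type blow-up criterion that $T^{*}<\infty$ implies $\limsup_{t\to T^{*}}\|\partial_{x}u(t)\|_{L^{\infty}}=+\infty$, and $\|u(t)\|_{L^{2}}=\|u_0\|_{L^{2}}$ is conserved because \eqref{eq:1.1} is a skew-adjoint perturbation of Burgers. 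Differentiating \eqref{eq:1.1} in $x$ gives $\partial_{t}u_{x}+uu_{xx}+u_{x}^{2}=\mathcal{K}_{s}\partial_{x}^{2}u$, so writing $m(t):=\inf_{x}u_{x}(t,x)=u_{x}(t,y(t))$ and using $u_{xx}(t,y(t))=0$ at an interior minimum, one has, for a.e.\ $t\in[0,T^{*})$,
\[
\dot m(t)=-m(t)^{2}+\mathcal{K}_{s}\partial_{x}^{2}u(t,y(t)),\qquad\text{hence}\qquad\big|\dot m(t)+m(t)^{2}\big|\le\|\mathcal{K}_{s}\partial_{x}^{2}u(t)\|_{L^{\infty}};
\]
the same computation for $\sup_{x}u_{x}$ shows it stays bounded as long as the right-hand side does, so wave breaking can occur only through $m(t)\to-\infty$, and it suffices to track the scalar $m$.

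Next I would record the multiplier estimates. For $s\ge 2$ one writes $\mathcal{K}_{s}\partial_{x}^{2}=\mathcal{K}_{s}-\mathcal{K}_{s-2}$, both bounded on $L^{\infty}$, to get the clean $\|\mathcal{K}_{s}\partial_{x}^{2}u\|_{L^{\infty}}\lesssim_{s}\|u\|_{L^{\infty}}$ and $\|\mathcal{K}_{s}\partial_{x}u\|_{L^{\infty}}\lesssim_{s}\|u\|_{L^{\infty}}$, after which the argument is essentially immediate. For $1\le s<2$ the operator $\mathcal{K}_{s}\partial_{x}^{2}$ is not bounded on $L^{\infty}$; a Littlewood--Paley decomposition (Bernstein on the high frequencies, the integrable kernel on the low ones) together with Sobolev interpolation and $\|u\|_{L^{2}}=\|u_{0}\|_{L^{2}}$ instead yields $\|\mathcal{K}_{s}\partial_{x}u\|_{L^{\infty}}\lesssim_{s}\|u\|_{L^{\infty}}+\|u_{x}\|_{L^{2}}$ and, crucially, an estimate of the form $\|\mathcal{K}_{s}\partial_{x}^{2}u\|_{L^{\infty}}\lesssim_{s}\|u\|_{L^{\infty}}+\|u_{xx}\|_{L^{2}}^{\theta_{s}}$ with a subcritical power $\theta_{s}<1$ (for $s$ bounded below; $\theta_{s}\le 1$ in general). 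Alongside these I would use the elementary identities, in which the dispersive term drops out by self-adjointness of $\mathcal{K}_{s}$: $\tfrac{d}{dt}\|u\|_{L^{\infty}}\le\|\mathcal{K}_{s}\partial_{x}u\|_{L^{\infty}}$, $\tfrac{d}{dt}\|u_{x}\|_{L^{2}}^{2}=-\int u_{x}^{3}\le(-\inf_{x}u_{x})_{+}\|u_{x}\|_{L^{2}}^{2}$, $\tfrac{d}{dt}\|u_{xx}\|_{L^{2}}^{2}=-5\int u_{x}u_{xx}^{2}\le 5(-\inf_{x}u_{x})_{+}\|u_{xx}\|_{L^{2}}^{2}$, together with $\|u_{x}\|_{L^{2}}\lesssim\|u_{0}\|_{L^{2}}^{1/2}\|u_{xx}\|_{L^{2}}^{1/2}$.

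Then I would run a continuity/bootstrap argument on the largest subinterval $[0,\mathcal{T})\subset[0,T^{*})$ on which $\|u(t)\|_{L^{\infty}}\le C_{0}$ and $\|u_{xx}(t)\|_{L^{2}}$ stays below an explicit (possibly time-growing) ceiling tied to \eqref{eq:2.4}. On $[0,\mathcal{T})$ the multiplier bounds give $\|\mathcal{K}_{s}\partial_{x}^{2}u(t)\|_{L^{\infty}}\le C\big(\|u_{0}\|_{H^{2}}+C_{1}+\|u_{0}''\|_{L^{2}}\big)\le\delta^{2}A^{2}$, the last inequality being precisely the first line of \eqref{eq:2.4}; since $m(0)^{2}=A^{2}>\delta^{2}A^{2}\ge\|\mathcal{K}_{s}\partial_{x}^{2}u\|_{L^{\infty}}$, $m$ is strictly decreasing so $|m(t)|\ge A$, and the inequality above becomes $-(1+\delta^{2})m^{2}\le\dot m\le-(1-\delta^{2})m^{2}$. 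Integrating $\tfrac{d}{dt}(-1/m)$ yields $\dfrac{A}{1-(1-\delta^{2})At}\le|m(t)|\le\dfrac{A}{1-(1+\delta^{2})At}$: the upper bound, inserted back into the energy identities over the short interval of length $\sim A^{-1}$, keeps $\|u\|_{L^{\infty}}$ and $\|u_{xx}\|_{L^{2}}$ (the latter thanks to the subcritical power $\theta_{s}$, which beats the $m^{2}$ on the right) strictly below their ceilings — this is where the second and third lines of \eqref{eq:2.4}, comparing $A$ with $C_{0}^{-1}\|u_0'\|_{L^{2}}$ and with $1+C_{1}^{-1}\|u_0''\|_{L^{2}}$, are used to absorb the $O(A^{-1})$ time-integrated corrections into $C_{0}$ and $C_{1}$ — so that $\mathcal{T}=T^{*}$; meanwhile the lower bound forces $T^{*}<(1-\delta^{2})^{-1}A^{-1}<(1-\delta)^{-2}A^{-1}$ and the complementary comparison gives $T^{*}>(1+\delta^{2})^{-1}A^{-1}>(1+\delta)^{-1}A^{-1}$. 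Since $\|u(t)\|_{L^{\infty}}\le C_{0}$ on $[0,T^{*})$ while $\|\partial_{x}u(t)\|_{L^{\infty}}\ge|m(t)|\to+\infty$, this is wave breaking at $T=T^{*}$ in the range \eqref{eq:2.3}.

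The step I expect to be the real obstacle is closing the bootstrap in the regime $1\le s<2$. There the nonlocal forcing genuinely involves the $H^{2}$ norm, which — exactly as for Burgers — must blow up as $t\to T^{*}$, so no uniform bound on $\|u_{xx}\|_{L^{2}}$ is available; what saves the argument is that the multiplier estimate sees only a subcritical power $\theta_{s}<1$ of it, so that on the short interval $[0,T^{*})\sim[0,A^{-1})$ the mutual feedback between $m$ and $\|u_{xx}\|_{L^{2}}$ stays below the quadratic Riccati growth, and it is precisely the calibration in the three inequalities of \eqref{eq:2.4} — and the hypothesis $s\ge 1$, which guarantees $\theta_{s}<1$ with enough margin — that makes this work. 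For $s\ge 2$ none of this is needed: $\|\mathcal{K}_{s}\partial_{x}^{2}u\|_{L^{\infty}}\lesssim\|u\|_{L^{\infty}}\le C_{0}$ directly, and the proof reduces to the clean Riccati comparison of the third paragraph.
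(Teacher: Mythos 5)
Your architecture is sound and reaches the same conclusion, but by a genuinely different technical route. The paper works in Lagrangian coordinates ($v_1=u\circ X$, $v_2=\partial_xu\circ X$), reduces everything to the single a priori bound $|K_2^s(t,x)|<\delta^2m^2(t)$ (see \eqref{eq:4.6}), and proves it by splitting the convolution \eqref{K2} in physical space at a time-dependent scale $\eta=\eta(t)$, using the pointwise asymptotics of the Bessel kernel $G_s$ from Lemma \ref{lem:1} (in particular the $L^2$ tail bound \eqref{eq:3.4} for $s\ge1$) together with H\"older; you instead prove the same bound by a Littlewood--Paley/Bernstein multiplier estimate, $\|\mathcal K_s\partial_x^2u\|_{L^\infty}\lesssim \text{(low freq.)}+\|u\|_{L^2}^{1-\theta_s}\|u_{xx}\|_{L^2}^{\theta_s}$ with $\theta_s=(5-2s)/4$, exploiting conservation of $\|u\|_{L^2}$. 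Both routes feed the same three energy estimates (for $\|u\|_{L^\infty}$ along characteristics, $\|u_x\|_{L^2}$, and $\|u_{xx}\|_{L^2}$, the last growing like $q^{-5/(2(1-\delta)^2)}$) into the same Riccati comparison, and your exponent count closes for the same reason as the paper's \eqref{eq:6.11}: $\tfrac52\theta_1=\tfrac{15}{8}<2$, i.e.\ the forcing grows strictly slower than $m^2$. Your Fourier-side argument is arguably cleaner and explains transparently why $s\ge1$ suffices; the paper's kernel-splitting has the advantage of extending uniformly to the range $s\in(2/5,1)$ of Theorem~\ref{th:2.1}, where the low-frequency/high-frequency dichotomy is less favorable.

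Three points need repair, none fatal. First, for $H^2$ data the statement ``$u_{xx}(t,y(t))=0$ at an interior minimum'' is not meaningful pointwise; you should instead track $u_x$ along characteristics (exactly the paper's $v_2$, which satisfies $\dot v_2=-v_2^2+K_2^s$ with $K_2^s=\mathcal K_s u_{xx}\in H^s\subset C^0$) and take the infimum over Lagrangian labels afterwards. Second, your low-frequency bound $\|P_{\le0}\mathcal K_s\partial_x^2u\|_{L^\infty}\lesssim\|u\|_{L^\infty}$ imports the constant $C_0$ into the comparison with $\delta^2m^2$, whereas \eqref{eq:2.4}$_1$ contains no $C_0$; bound the low frequencies by $\|u_x\|_{L^2}$ (Bernstein) or $\|u_x\|_{L^\infty}\le C_1q^{-1}$ instead, as the paper effectively does. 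Third, $\sup_xu_x$ does \emph{not} stay bounded under your differential inequality --- it may grow like $\delta^2\int_0^tm^2\sim\delta^2|m(t)|$ --- but this is harmless since wave breaking only requires $\|u\|_{L^\infty}$ bounded and $|m(t)|\to\infty$.
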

	
	\begin{remark}
		There exists a class of initial data $u_{0}$ satisfying the conditions $\eqref{eq:2.1}_1$-$\eqref{eq:2.1}_3$ in Theorem \ref{th:2.1}. Indeed, for  any given $\phi \in H^3(\mathbb{R})$ with $\inf_{x \in \mathbb{R}} \phi ^{\prime}(x)<0$, set 
 \begin{equation*}
\begin{aligned}
&u_{0}=\lambda \phi,\\
&C_{0}=2 \lambda\|\phi\|_{L^{\infty}}, \\
&C_{1}=2 \lambda\|\phi^{\prime}\|_{L^{\infty}}.
\end{aligned}
\end{equation*}
Choosing  $\lambda>0$ sufficiently large, one can easily check that $u_{0}$  satisfies $\eqref{eq:2.1}_1$-$\eqref{eq:2.1}_3$ by comparing the powers of  $\lambda$  on both sides of each inequality. For example, $u_{0}(x)=\lambda e^{-x^2}$ with $\lambda>0$ sufficiently large satisfies $\eqref{eq:2.1}_1$-$\eqref{eq:2.1}_3$. 
	\end{remark}

	\begin{remark}
		The conditions $\eqref{eq:2.4}_1$-$\eqref{eq:2.4}_3$ lower the requirement on the regularity of the initial data \(u_0\) of $\eqref{eq:2.1}_1$-$\eqref{eq:2.1}_3$ from \(H^3\) to \(H^2\) since the dispersion effect of the Bessel potential is much weaker when \(s\) is larger. 	
	\end{remark}

	\begin{remark}
	Oh and Pasqualotto  \cite{2021arXiv210707172O} obtained precise wave breaking information for the solution to \eqref{eq:1.1} when $p=1$ and $s \in (0, 1]$ with more delicated assumptions on the initial data by the modulation theory (see also Yang's work \cite{MR4321245} on the Burgers-Hilbert equation).  
	\end{remark}

	\subsection{The case \(p>1\)}
	In this case, we show that the solution to \eqref{eq:1.1} can exhibit wave breaking for all
$s \in(0, \infty)$.
	\begin{theorem}\label{th:2.3}
		Let $s \in(0, 1)$. Suppose  $\bar{x}_{1}$ and  $\bar{x}_{2}$ are the largest and smallest numbers such that  $\{x: u_{0}^{\prime}(x)<0\} \subset[\bar{x}_{1},  \bar{x}_{2}]$.  If  $u_{0} \in H^{3}(\mathbb{R})$  satisfies the following slope conditions:
		{\small \begin{equation}\label{eq:2.5}
				\begin{aligned}
					&\delta^{2} \big[-\inf_{x \in \mathbb{R}} u_{0} ^{\prime}(x)\big]^{2} >C\bigg[\|u_{0}\|_{H^{3}} +  C_{1}
					+\bigg(\frac{A^{p-1}}{2B^{p-1}}\bigg)^{-\frac{7B^{p-1}}{2pA^{2p-2}}}\|u_{0}^{\prime \prime \prime}\|_{L^{2}}\bigg],	\\
					&(1-\delta)^{2}\big[-\inf_{x \in \mathbb{R}} u_{0} ^{\prime}(x)\big] > C(1 + C_{1}C_{0}^{-1}),\\
					&(1-\delta)^{3} \big[-\inf_{x \in \mathbb{R}} u_{0} ^{\prime}(x)\big] >C\bigg[ 1 +\bigg(\frac{A^{p-1}}{2B^{p-1}}\bigg)^{-\frac{7B^{p-1}}{pA^{2p-2}}}C_{1}^{-1}\|u_{0}^{\prime \prime \prime}\|_{L^{2}} \bigg],
				\end{aligned}
		\end{equation}}
		and local amplitude conditions:
		\begin{equation}\label{eq:2.6}
			\begin{aligned}
				&u_{0}(x)<B-C(1-\delta)^{-2} \big[-\inf_{x \in \mathbb{R}} u_{0}^{\prime}(x)\big]^{-1}(C_{0}+ C_{1}),\\
				&u_{0}(x)>A+C(1-\delta)^{-2} \big[-\inf_{x \in \mathbb{R}} u_{0}^{\prime}(x)\big]^{-1}(C_{0}+ C_{1})
			\end{aligned}
		\end{equation}
		for all $x\in[\bar{x}_{1}, \bar{x}_{2}]$,
		where \(\delta\in (0,1)\) is a small number, and $C_{0}, C_{1}>0$ satisfy \eqref{eq:2.2}, and $A, B > 0$ satisfy
		\begin{equation}\label{eq:2.7}
			\begin{array}{l}
				A^{2p-2}>8\delta B^{2p-2}, \quad 4pA^{2p-2}>7B^{p-1},   \\
				B>A+C(1-\delta)^{-2} \big[-\inf_{x \in \mathbb{R}} u_{0}^{\prime}(x)\big]^{-1}(C_{0}+ C_{1}).
			\end{array}
		\end{equation}
		Then the solution $u(t,  x)$ to \eqref{eq:1.1} exhibits wave breaking at  $T > 0$ with
		{\small \begin{equation}\label{eq:2.8}
\begin{aligned}
				\frac{1}{pB^{p-1}+\delta} \frac{1}{[-\inf_{x \in \mathbb{R}} u_{0}^{\prime}(x)]}<T<\frac{1}{(A^{p-1}B^{1-p}-\delta)(pA^{p-1}-\delta)} \frac{1}{[-\inf_{x \in \mathbb{R}} u_{0} ^{\prime}(x)]}.
\end{aligned}
		\end{equation}}
	\end{theorem}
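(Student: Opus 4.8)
The plan is to run the method of characteristics, reducing \eqref{eq:1.1} to a scalar Riccati-type inequality along the flow as in the case $p=1$, while now carrying the variable coefficient $pU^{p-1}$; this is where the local amplitude hypotheses \eqref{eq:2.6}--\eqref{eq:2.7} enter, since they are designed to trap $U$ in an interval close to $[A,B]$ along the relevant characteristics. Recall first that for $u_0\in H^3(\R)$ the Cauchy problem \eqref{eq:1.1} has a unique maximal solution $u\in C([0,T^*);H^3(\R))$ and that $T^*<\infty$ forces $\liminf_{t\to T^*}\inf_x\partial_x u(\cdot,t)=-\infty$ (standard energy estimates, the dispersion being skew-adjoint); so it suffices to place $T^*$ inside the interval \eqref{eq:2.8} and to verify that $\norm{u(t)}_{L^\infty}$ stays finite up to $T^*$. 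Introduce the characteristics $\partial_t X(t,\alpha)=u^p(X(t,\alpha),t)$, $X(0,\alpha)=\alpha$, and set $U(t,\alpha)=u(X,t)$, $v(t,\alpha)=\partial_x u(X,t)$. Differentiating \eqref{eq:1.1} in $x$ and using $\K_s\partial_x^2=\K_s-(1-\partial_x^2)^{1-s/2}$ gives, along the flow,
\begin{equation}\label{eq:plan-char}
\begin{aligned}
\dot U=\K_s\partial_x u\big|_X,\qquad \dot v=-pU^{p-1}v^2+R,\qquad R:=\big(\K_s u-(1-\partial_x^2)^{1-s/2}u\big)\big|_X .
\end{aligned}
\end{equation}

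Two Fourier-side estimates control the inhomogeneities in \eqref{eq:plan-char}. First, for $s\in(0,1)$ one has $2-s<5/2$, so a Littlewood--Paley decomposition together with Bernstein's inequality yields $\norm{(1-\partial_x^2)^{1-s/2}u}_{L^\infty}\lesssim\norm{u}_{H^3}$, whence $\abs{R}\lesssim\norm{u}_{H^3}$. Second, and crucially, $\K_s\partial_x$ is \emph{subcritical} with respect to the slope: summing $\norm{\K_s\partial_x P_j u}_{L^\infty}\lesssim 2^{-js}\norm{P_j\partial_x u}_{L^\infty}$ over $j\ge0$ (convergent since $s>0$) and $\norm{\K_s\partial_x P_j u}_{L^\infty}\lesssim 2^{j}\norm{P_j u}_{L^\infty}$ over $j<0$ gives $\norm{\K_s\partial_x u}_{L^\infty}\lesssim\norm{u}_{L^\infty}+\norm{\partial_x u}_{L^\infty}$, and sharper variants (using $G_s\in L^q$ for $q<1/(1-s)$, or the cancellation $\int G_s'=0$) bound $\abs{\dot U}$, once integrated in time, by $\norm{u}_{L^\infty}$ plus a power strictly less than $1$ of $\norm{\partial_x u}_{L^\infty}$. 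One also records the conservation $\norm{u(t)}_{L^2}=\norm{u_0}_{L^2}$ and the energy inequality $\tfrac{d}{dt}\norm{u}_{H^3}^2\lesssim\norm{u}_{L^\infty}^{p-1}\norm{\partial_x u}_{L^\infty}\norm{u}_{H^3}^2$ (the dispersion drops out, commuting with $\langle\partial_x\rangle^3$).

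The core is a continuity argument on the life span. Choose $\alpha_0$ with $u_0'(\alpha_0)=\inf u_0'=:-M_0$. As long as the bootstrap bounds $\norm{u(t)}_{L^\infty}\le C_0$ and $\norm{u(t)}_{H^3}\le(1+\delta)\norm{u_0}_{H^3}$ persist, integrating $\abs{\dot U}$ and using \eqref{eq:2.6}--\eqref{eq:2.7} shows $\abs{U(t,\alpha)-u_0(\alpha)}$ to be no larger than the correction appearing in \eqref{eq:2.6}; hence $U(t,\alpha)\in(A,B)$ for every characteristic issuing from $\{u_0'<0\}\subset[\bar x_1,\bar x_2]$, so that $pU^{p-1}\in\big(pA^{p-1}-\delta,\ pB^{p-1}+\delta\big)$ along all of them. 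Inserting this in \eqref{eq:plan-char}, the slope equation along any such characteristic is squeezed between two constant-coefficient Riccati equations,
\begin{equation*}
\begin{aligned}
-\big(pB^{p-1}+\delta\big)v^2-\delta M_0^2\ \le\ \dot v\ \le\ -\big(pA^{p-1}-\delta\big)v^2+\delta M_0^2 ,
\end{aligned}
\end{equation*}
the $\delta M_0^2$ absorbing $R$ once the first inequality in \eqref{eq:2.5} forces $M_0$ so large that $\norm{u_0}_{H^3}+C_1+\dots\ll\delta^2M_0^2$. Comparison with the upper equation (initial value $-M_0$) forces $v(t,\alpha_0)\to-\infty$ no later than the right endpoint of \eqref{eq:2.8}; comparison with the lower equation for every characteristic (those with $v(0)\ge0$ cannot break within the window, the subquadratic source moving $v$ only by $O(1)$ in time $O(M_0^{-1})$) shows no blow-up before the left endpoint. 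The factor $A^{p-1}B^{1-p}$ in the upper bound accounts for the downward drift of $U$ still permitted by \eqref{eq:2.6}--\eqref{eq:2.7}, and the exponential-type constant in the first line of \eqref{eq:2.5} is exactly the Gronwall factor $\exp\!\big(C\!\int_0^{T}\norm{u}_{L^\infty}^{p-1}\norm{\partial_x u}_{L^\infty}\big)$ over a life span $T\sim M_0^{-1}$, which keeps $\norm{u(t)}_{H^3}$ inside the bootstrap. Closing the continuity argument then identifies $T^*$ as the breaking time, located in \eqref{eq:2.8}; since $\norm{u(t)}_{L^\infty}\le C_0$ throughout, the blow-up is genuine wave breaking.

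The delicate point is the interplay in the last step: near $T^*$ the slope $\norm{\partial_x u(t)}_{L^\infty}$ is itself unbounded, so a crude use of the energy inequality — and with it the bound $\abs{R}\lesssim\norm{u}_{H^3}$ — would degenerate and destroy both the drift control on $U$ and the domination of $R$ by the Riccati term. This is precisely where the smoothing of $\K_s$ is essential: because $\K_s\partial_x$ and $\K_s\partial_x^2$ cost strictly less than one and two derivatives, $\dot U$ and $R$ are dominated by $\norm{u}_{L^\infty}$ together with a strictly sub-linear (after time integration) and sub-quadratic quantity, respectively, in $\norm{\partial_x u}_{L^\infty}$, so that over the $O(M_0^{-1})$ window $\norm{u(t)}_{H^3}$ grows by at most the factor built into \eqref{eq:2.5} and $R=o(v^2)$ survives as $v\to-\infty$. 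Making these sub-critical gains quantitative, so that they fit the explicit constants in \eqref{eq:2.5}--\eqref{eq:2.7}, is the main technical work; the threshold $s>2/5$ in Theorem~\ref{th:2.1} is replaced here by the full range $s\in(0,1)$ precisely because for $p>1$ the amplitude conditions keep $pU^{p-1}$ bounded below away from zero, so the Riccati gain is robust even for very small $s$.
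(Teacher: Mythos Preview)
Your overall framework is right — characteristics, Riccati comparison, and the amplitude hypotheses \eqref{eq:2.6}--\eqref{eq:2.7} trapping $U$ in $(A,B)$ — but the argument has a genuine gap at exactly the point you flag in your last paragraph, and your proposed resolution does not work.

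The problem is your bootstrap choice $\|u(t)\|_{H^3}\le(1+\delta)\|u_0\|_{H^3}$. This \emph{cannot} be maintained up to the breaking time: once you have shown that $m(t)=\inf_x\partial_xu\to-\infty$, the energy identity you wrote, $\tfrac{d}{dt}\|u\|_{H^3}^2\lesssim\|u\|_{L^\infty}^{p-1}\|\partial_xu\|_{L^\infty}\|u\|_{H^3}^2$, forces $\|u\|_{H^3}$ to diverge as well (the time integral of $\|\partial_xu\|_{L^\infty}$ diverges logarithmically near $T^*$). So your bound $|R|\lesssim\|u\|_{H^3}$, which is all your decomposition $R=\K_s u-(1-\partial_x^2)^{1-s/2}u$ gives you, becomes useless precisely when you need it most: the second term is a fractional derivative of order $2-s>1$ and cannot be controlled by $\|u\|_{L^\infty}$ and $\|\partial_xu\|_{L^\infty}$ alone. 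Your assertion that $R$ is ``sub-quadratic in $\|\partial_xu\|_{L^\infty}$'' is the entire content of the proof, and it is not delivered.

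The paper's bootstrap is quite different. One does \emph{not} bootstrap $\|u\|_{H^3}$; instead one bootstraps $|K_2^s(t,x)|<\delta^2 m(t)^2$ together with $\|\partial_xu(t)\|_{L^\infty}\le C_1q(t)^{-1}$, where $q(t):=m(0)/m(t)\in(0,1]$. With this, the energy estimate for $\|\partial_x^3u\|_{L^2}$ combined with the bound $\int_0^t q^{-1}\lesssim -\log q(t)$ (Lemma \ref{lem:8}) yields a \emph{power-law} growth $\|\partial_x^3u\|_{L^2}\lesssim q(t)^{-\gamma}$ with $\gamma=\tfrac{7B^{p-1}}{2pA^{2p-2}}$; this is the origin of the constant $(A^{p-1}/2B^{p-1})^{-\gamma}$ in \eqref{eq:2.5}, not a Gronwall factor on $\|u\|_{H^3}$ over a window of length $M_0^{-1}$ as you suggest. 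One then writes $K_2^s=\int G_s(y)\partial_x^2u(\cdot,X-y)\,dy$, splits the kernel at scale $\eta\in(0,1]$, bounds the near-origin piece by $\eta^s\|\partial_x^2u\|_{L^\infty}$ and the far-origin piece (after integrating by parts) by $\eta^{s-1}\|\partial_xu\|_{L^\infty}$, controls $\|\partial_x^2u\|_{L^\infty}$ via Gagliardo--Nirenberg against $\|\partial_xu\|_{L^\infty}$ and $\|\partial_x^3u\|_{L^2}$, and finally optimizes over $\eta$. The resulting exponent of $q(t)$ is $\ge-2$ precisely under the structural condition $4pA^{2p-2}>7B^{p-1}$ in \eqref{eq:2.7}; this is what pushes the admissible range down to all $s\in(0,1)$, not merely the robustness of the Riccati coefficient $pU^{p-1}$ as you claim. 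Without this quantitative kernel splitting and the $q(t)$-bookkeeping, the bootstrap does not close.
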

	
	\begin{theorem}\label{th:2.4}
		Let $s \in [1, \infty)$. Suppose  $\bar{x}_{2}$  are the largest and smallest numbers such that  $\{x: u_{0}^{\prime}(x)<0\} \subset[\bar{x}_{1}, \bar{x}_{2}]$. If  $u_{0}$  satisfies the slope conditions:
		{\small \begin{equation*}
				\begin{aligned}
					&\delta^{2} \big[-\inf_{x \in \mathbb{R}} u_{0} ^{\prime}(x)\big]^{2} > C \bigg[ \|u_{0}\|_{H^{2}} +  \bigg(C_{1} +\bigg(\frac{A^{p-1}}{2B^{p-1}}\bigg)^{-\frac{5B^{p-1}}{2pA^{2p-2}}}\|u_{0}^{\prime \prime }\|_{L^{2}} \bigg)\bigg],\\
					&(1-\delta)^{2}\big[-\inf_{x \in \mathbb{R}} u_{0} ^{\prime}(x)\big] >  C\bigg(\frac{A^{p-1}}{2B^{p-1}}\bigg)^{-\frac{B^{p-1}}{2pA^{2p-2}}} C_{0}^{-1}\|u_{0}^{\prime}\|_{L^{2}},\\
					&(1-\delta)^{3} \big[-\inf_{x \in \mathbb{R}} u_{0} ^{\prime}(x)\big] >C\bigg(\frac{A^{p-1}}{2B^{p-1}}\bigg)^{-\frac{5B^{p-1}}{2pA^{2p-2}}}C_{1}^{-1}\|u_{0}^{\prime \prime }\|_{L^{2}},
				\end{aligned}
		\end{equation*}}
		and local amplitude conditions:
		\small{\begin{equation*}
				\begin{aligned}
					&u_{0}(x)<B-C (1-\delta)^{-2} \big[-\inf_{x \in \mathbb{R}} u_{0} ^{\prime}(x)\big]^{-1}\bigg(\frac{A^{p-1}}{2B^{p-1}}\bigg)^{-\frac{B^{p-1}}{2pA^{2p-2}}}\|u_{0}^{\prime}\|_{L^{2}},\\
					&u_{0}(x)>A+C (1-\delta)^{-2} \big[-\inf_{x \in \mathbb{R}} u_{0} ^{\prime}(x)\big]^{-1}\bigg(\frac{A^{p-1}}{2B^{p-1}}\bigg)^{-\frac{B^{p-1}}{2pA^{2p-2}}}\|u_{0}^{\prime}\|_{L^{2}}
				\end{aligned}
		\end{equation*}}
		for all $x\in[\bar{x}_{1}, \bar{x}_{2}]$,
		where \(\delta\in (0,1)\) is a small number, and $C_{0}, C_{1}>0$ satisfy \eqref{eq:2.2}, and $A, B > 0$ satisfy
		\begin{equation*}
			\begin{array}{l}
				A^{2p-2}>8\delta B^{2p-2}, \quad 4pA^{2p-2}>7B^{p-1},   \\
				B>A+C(1-\delta)^{-2} \big[-\inf _{x \in \mathbb{R}} u_{0}^{\prime}(x)\big]^{-1}\bigg(\frac{A^{p-1}}{2B^{p-1}}\bigg)^{-\frac{B^{p-1}}{2pA^{2p-2}}}\|u_{0}^{\prime}\|_{L^{2}}.
			\end{array}
		\end{equation*}
		Then the solution $u(t, x)$ to \eqref{eq:1.1}  exhibits wave breaking at $T > 0$ with \eqref{eq:2.8}.
	\end{theorem}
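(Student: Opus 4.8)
The plan is to push \eqref{eq:1.1} onto its characteristics, reduce the evolution of the slope to a Riccati-type differential inequality, and close a continuity argument on a short time interval whose shortness is forced by the largeness of $-\inf u_0'$. First, since \eqref{eq:1.1} is a skew-adjoint perturbation of the Burgers equation, standard energy estimates give a unique maximal solution $u\in C([0,T_{\max});H^2(\R))$ with the blow-up alternative: $T_{\max}<\infty\Rightarrow\int_0^{T_{\max}}\norm{\partial_x u(t)}_{L^\infty}\,\diff t=\infty$. Let $X(t,x)$ solve $\partial_t X=u^p(t,X)$, $X(0,x)=x$, and set $\eta=u\circ X$, $v=\partial_x u\circ X$; differentiating \eqref{eq:1.1} in $x$ and evaluating along $X$ gives
\[
\dot\eta=(\mathcal{K}_s\partial_x u)\circ X,\qquad \dot v=-p\,\eta^{p-1}v^2+(\mathcal{K}_s\partial_x^2 u)\circ X .
\]
Choose $\xi_0$ with $u_0'(\xi_0)=\inf_\R u_0'=:-a_0<0$; then $\xi_0\in[\bar x_1,\bar x_2]$, so by the amplitude hypotheses $u_0(\xi_0)$ is trapped strictly between $A$ and $B$.

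The role of the restriction $s\ge1$ is that $\mathcal{K}_s\partial_x$ and $\mathcal{K}_s\partial_x^2$ are then (essentially) of nonpositive order and of order one, respectively. Using the Littlewood-Paley decomposition of $u$, Bernstein's inequality and the explicit Bessel kernel — in particular the identity $\mathcal{K}_s\partial_x^2=\mathcal{K}_s-\mathcal{K}_{s-2}$, which for $s\ge2$ already gives $\norm{\mathcal{K}_s\partial_x^2 u}_{L^\infty}\le2\norm{u}_{L^\infty}$, with the corresponding localized bound for $s\in[1,2)$ — one obtains $\norm{\mathcal{K}_s\partial_x u(t)}_{L^\infty}\lesssim\norm{u(t)}_{L^\infty}$ and $\norm{\mathcal{K}_s\partial_x^2 u(t)}_{L^\infty}\lesssim\norm{u(t)}_{L^\infty}+\norm{\partial_x u(t)}_{L^\infty}$ (for $s<1$ the second estimate costs one more derivative, which is exactly why Theorems \ref{th:2.1} and \ref{th:2.3} need $H^3$). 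Feeding the first bound into $\dot\eta$ and applying Grönwall gives $\norm{u(t)}_{L^\infty}\le\norm{u_0}_{L^\infty}e^{Ct}\le\tfrac{C_0}{2}e^{Ct}$, so on a window of length $\lesssim a_0^{-1}$ we keep $\norm{u}_{L^\infty}\le C_0$; the second bound, once one knows that it is the negative part of $\partial_x u$ that grows, turns the $v$-equation along $\xi_0$ into $\dot v\le -p\,\eta^{p-1}v^2+C C_0+C\abs v$, with the $C\abs v$ term harmless when $\abs v$ is large.

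The core is then a continuity argument. Let $T^\ast\le T_{\max}$ be the largest time on which, simultaneously, $\norm{u(t)}_{L^\infty}\le C_0$, $\eta(t,\xi_0)$ stays between quantities comparable to $A^{p-1}$ and $B^{p-1}$ (using $\abs{\dot\eta}\le\norm{u}_{L^\infty}$, so $\eta(\cdot,\xi_0)$ drifts by $\lesssim C_0 T^\ast\lesssim C_0 a_0^{-1}$), and $-\inf_x\partial_x u(t,x)$ is attained along (near) $\xi_0$ while $\sup_x\partial_x u(t,x)$ stays bounded. On $[0,T^\ast)$ one has $-p\tilde B^{p-1}v^2-C(C_0+\abs v)\le\dot v\le -p\tilde A^{p-1}v^2+C(C_0+\abs v)$ with $v(0)=-a_0$ and $\tilde A,\tilde B$ close to $A,B$; the slope hypotheses (the first two displayed inequalities of the statement together with \eqref{eq:2.7}) make $p\tilde A^{p-1}a_0^2$ dominate the forcing, so $\dot v\le-\tfrac p2\tilde A^{p-1}v^2$ once $\abs v$ has grown slightly, whence $v(t)\to-\infty$ at some $T<T^\ast$ with the upper bound in \eqref{eq:2.8}; the lower comparison yields the matching lower bound, and the local-amplitude hypotheses guarantee the bootstrap assumptions are not violated before $T$. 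The exponents $\big(\tfrac{A^{p-1}}{2B^{p-1}}\big)^{-\cdots}$ and the constraints $A^{2p-2}>8\delta B^{2p-2}$, $4pA^{2p-2}>7B^{p-1}$ are exactly the bookkeeping that makes these inequalities close uniformly. Since $v(t)\to-\infty$ forces $\sup_x\abs{\partial_x u(t,x)}\to\infty$, we get $T_{\max}=T$ by the blow-up alternative, while $\norm{u(t)}_{L^\infty}\le C_0$ on $[0,T)$: this is wave breaking at $T$ satisfying \eqref{eq:2.8}.

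I expect the main obstacle to be this continuity argument and its calibration. One has to verify, within the short breaking window, that $u$ stays positive and trapped near $(A,B)$ along $\xi_0$ — so that the Riccati coefficient $p\eta^{p-1}$ remains comparable to both $pA^{p-1}$ and $pB^{p-1}$; this is delicate when $p-1$ is odd, which is precisely why $A>0$ and the local-amplitude conditions are imposed — and that the negative part of $\partial_x u$ dominates its positive part, so the forcing stays genuinely lower order in the Riccati inequality. Choosing $A,B,\delta$ and the largeness threshold for $a_0$ so that all of these, together with the energy/Grönwall estimates behind the $L^\infty$ and $H^2$ bounds, are mutually consistent is the technical heart of the proof.
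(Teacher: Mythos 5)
There is a genuine gap at the heart of your argument: the claimed bound $\norm{\mathcal{K}_s\partial_x^2 u}_{L^\infty}\lesssim\norm{u}_{L^\infty}+\norm{\partial_x u}_{L^\infty}$ fails at the endpoint $s=1$, which is included in the theorem and is precisely the hardest case. Writing $\mathcal{K}_s\partial_x^2=\mathcal{K}_s-\mathcal{K}_{s-2}$ is fine for $s\geq 2$, but for $s=1$ the second term requires controlling $\mathcal{K}_1\partial_x(\partial_x u)$ in $L^\infty$, and $\mathcal{K}_1\partial_x$ has symbol $\mathrm{i}\xi\langle\xi\rangle^{-1}$, an order-zero, Hilbert-transform-like multiplier that is not bounded on $L^\infty$; equivalently, by \eqref{eq:3.2} the kernel $G_1'$ behaves like $|x|^{-1}$ near the origin and is not locally integrable. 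This is exactly why the paper does not bound the forcing term $K_2^s$ pointwise by $\norm{u}_{W^{1,\infty}}$: instead it splits the convolution at a time-dependent scale $\eta(t)$, estimates the near part by H\"older against $\norm{\partial_x^2 u}_{L^2}$ (using that $G_1\in L^2_{\mathrm{loc}}$, \eqref{eq:3.4}), controls that $L^2$ norm by a Lagrangian energy estimate whose growth rate $q(t)^{-5B^{p-1}/(2pA^{2p-2})}$ is where the exponents in your hypotheses actually come from, and closes a bootstrap yielding $|K_2^s|\leq\delta^2 m^2(t)$ --- a bound \emph{quadratic} in the slope with a small constant, not linear. Your Riccati closure $\dot v\le -p\tilde A^{p-1}v^2+C(C_0+|v|)$ depends on the forcing being genuinely lower order, which is not available by the route you propose; the constraints on $A,B$ and the exponents $\big(\tfrac{A^{p-1}}{2B^{p-1}}\big)^{-\cdots}$ are not mere bookkeeping but encode this $L^2$-energy mechanism.

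A secondary but real omission: you assume that $-\inf_x\partial_x u(t,\cdot)$ continues to be attained near the initial minimizer $\xi_0\in[\bar x_1,\bar x_2]$, so that the a priori trapping $A\le u\le B$ applies where it is needed. The paper secures this through the sets $\Sigma_\delta(t)$ and their monotonicity in $t$ (Lemmas \ref{lem:3} and \ref{lem:6}), which is what legitimizes restricting the amplitude conditions to $[\bar x_1,\bar x_2]$; without some version of that lemma your continuity argument does not close. The overall skeleton (characteristics, Riccati inequality, bootstrap, two-sided bound on $T$) does match the paper's, but the two steps above are the technical core and both are missing or incorrect as stated.
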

 	%\begin{remark}
          % The order of $u_{0}$ on the left-hand side of $\eqref{eq:2.5}$-$\eqref{eq:2.6}$  is higher than that of the right-hand-side correspondingly. Therefore, similar to  $p = 1$, we can still find a class of initial data $u_{0}$  satisfying $\eqref{eq:2.5}$-$\eqref{eq:2.6}$ in Theorem \ref{th:2.3}.
	%\end{remark}
	\begin{remark}
		It should be pointed out that the interval $[\bar{x}_{1}, \bar{x}_{2}]$ can be replaced by a larger but finite interval in the local amplitude conditions \eqref{eq:2.6}. Otherwise, the local amplitude condition will become a global one, which means $u_{0}\notin L^{2}(\mathbb{R})$ and thus contradicts $u_{0}\in H^{3}(\mathbb{R})$. More importantly, if the latter case happened, then $u_{0}$ is bounded below by a positive constant on the entire line which is physically strange since $u_{0}$ is the initial elevation. In the classical water waves models (such as the KdV equation), the solution is assumed to tend to zero at infinity.
	\end{remark}

	\section{Preliminaries}
	
	First, we list some basic properties of the Bessel potential.
	\begin{lemma}\label{lem:1}
		There exists some constant $C$ only depending on $s$ such that
		\begin{equation}\label{eq:3.1}
			\begin{aligned}
				&G_{s}(x) \leq C\left\{\begin{array}{ll}
					\frac{1}{|x|^{1-s}} &\quad \mathrm{for}\  |x|\leq 1\ \mathrm{and}\ \ 0<s<1, \\
					\log \frac{1}{|x|}+1 &\quad \mathrm{for}\  |x|\leq 1\ \mathrm{and}\ \ s=1,\\
					1 &  \quad \mathrm{for}\  |x|\leq 1\ \mathrm{and}\ \ s>1,\\
					|x|^{\frac{s-2}{2}} e^{-|x|}&  \quad \mathrm{for}\  |x|> 1\ \mathrm{and}\ \ s>0
				\end{array}\right.
			\end{aligned}
		\end{equation}
		and
		\begin{equation}\label{eq:3.2}
			\begin{aligned}
				&|G_{s}^{\prime }(|x|)|
				\leq C\left\{\begin{array}{ll}
					\frac{1}{|x|^{2-s}} &\quad \mathrm{for}\  |x|\leq 1\ \mathrm{and}\ \ 0<s<2,\\
					1 & \quad \mathrm{for}\  |x|\leq 1\ \mathrm{and}\ \ s\geq 2,\\
					|x|^{\frac{s-2}{2}} e^{-|x|}&  \quad \mathrm{for}\  |x|> 1\ \mathrm{and}\ \ s>0.
				\end{array}\right.
			\end{aligned}
		\end{equation}
		In particular, one has
		\begin{equation}\label{eq:3.3}
			\int_{1}^{\infty} |G_{s}^{\prime}(|x|)| \mathrm{~d}x \leq C\quad \mathrm{for}\ s>0
		\end{equation}
		and	
		\begin{equation}\label{eq:3.4}
			\int_{\eta}^{\infty} G_{s}^{2}(x) \mathrm{~d}x \leq C\quad \mathrm{for}\ s\geq 1 \ \mathrm{and}\ \eta \in (0,1].
		\end{equation}
		Here \(C\) in \eqref{eq:3.4} does not depend on \(\eta\). 		
	\end{lemma}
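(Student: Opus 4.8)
The plan is to estimate everything directly from the subordination formula
\[
G_s(x)=c_0(s)\int_0^\infty t^{\frac{s-3}{2}}e^{-\frac{|x|^2}{4t}-t}\,\mathrm{d}t,
\]
treating $c_0(s)>0$ as an $s$-dependent constant absorbed into $C$, and splitting the $t$-integral according to the two competing exponential factors: $e^{-|x|^2/4t}$ controls small $t$ and $e^{-t}$ controls large $t$. For $|x|\le1$ I would write $\int_0^\infty=\int_0^{|x|^2}+\int_{|x|^2}^1+\int_1^\infty$. On the first piece the substitution $t=|x|^2u$ gives $|x|^{s-1}\int_0^1 u^{\frac{s-3}{2}}e^{-1/4u}\,\mathrm{d}u$, a convergent integral since $e^{-1/4u}$ beats any power as $u\to0$, hence a contribution $\lesssim|x|^{s-1}$. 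On $\int_{|x|^2}^1$ one bounds both exponentials by $1$ and integrates $t^{(s-3)/2}$ explicitly, producing $|x|^{s-1}$ for $0<s<1$, $\log(1/|x|)$ for $s=1$, and an $\bigO(1)$ term for $s>1$. The tail $\int_1^\infty t^{(s-3)/2}e^{-t}\,\mathrm{d}t$ is a finite constant. Adding the three pieces yields \eqref{eq:3.1} for $|x|\le1$.

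For the derivative, differentiating under the integral sign gives $G_s'(|x|)=-\tfrac{c_0(s)|x|}{2}\int_0^\infty t^{\frac{s-5}{2}}e^{-\frac{|x|^2}{4t}-t}\,\mathrm{d}t$, i.e. $|x|$ times an integral of exactly the same shape with $s$ replaced by $s-2$; the change-of-variable argument above never used $s>0$ (only convergence of the inner integrals, which $e^{-1/4u}$ and $e^{-t}$ guarantee for any exponent), so it still applies. Running the same splitting with exponent $s-2$ and multiplying by $|x|$ gives \eqref{eq:3.2} for $|x|\le1$ (the borderline cases $s=2,3$ using $|x|\log(1/|x|)\lesssim1$).

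For $|x|>1$ the key step is the Laplace-type rescaling $t=\tfrac{|x|}{2}\tau$, under which $\tfrac{|x|^2}{4t}+t=\tfrac{|x|}{2}\big(\tfrac1\tau+\tau\big)=|x|+\tfrac{|x|}{2}\tfrac{(\tau-1)^2}{\tau}$, so that
\[
G_s(x)=c_0(s)\Big(\tfrac{|x|}{2}\Big)^{\frac{s-1}{2}}e^{-|x|}\int_0^\infty \tau^{\frac{s-3}{2}}e^{-\frac{|x|}{2}\frac{(\tau-1)^2}{\tau}}\,\mathrm{d}\tau .
\]
It then remains to show the $\tau$-integral is $\lesssim|x|^{-1/2}$ for $|x|>1$: splitting $(0,\infty)=(0,\tfrac12]\cup[\tfrac12,2]\cup[2,\infty)$, on the middle interval $(\tau-1)^2/\tau\gtrsim(\tau-1)^2$ and $\tau^{(s-3)/2}\lesssim1$, so it is dominated by a Gaussian $\int_{\R}e^{-c|x|\sigma^2}\,\mathrm{d}\sigma\lesssim|x|^{-1/2}$; on the outer intervals $(\tau-1)^2/\tau$ is bounded below by $c/\tau$ (resp. $c\tau$), so $e^{-\frac{|x|}{2}\frac{(\tau-1)^2}{\tau}}$ yields genuine exponential-in-$|x|$ decay that easily absorbs the polynomial $\tau^{(s-3)/2}$. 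This gives $G_s(x)\lesssim|x|^{\frac{s-1}{2}}|x|^{-1/2}e^{-|x|}=|x|^{\frac{s-2}{2}}e^{-|x|}$, the last case of \eqref{eq:3.1}; applying the same rescaling to the integral representation of $G_s'$ (exponent $s-2$, hence bound $|x|^{\frac{s-4}{2}}e^{-|x|}$) and multiplying by $|x|$ gives $|x|^{\frac{s-2}{2}}e^{-|x|}$, the last case of \eqref{eq:3.2}. Finally \eqref{eq:3.3} is immediate since $\int_1^\infty|x|^{\frac{s-2}{2}}e^{-|x|}\,\mathrm{d}x<\infty$; and for \eqref{eq:3.4} one splits $\int_\eta^\infty=\int_\eta^1+\int_1^\infty$, the tail being controlled by \eqref{eq:3.1} and $\eta$-independent, while on $(\eta,1)$ one uses $G_s^2(x)\lesssim(\log(1/|x|)+1)^2$ for $s=1$ and $G_s^2(x)\lesssim1$ for $s>1$, both integrable on $(0,1)$, so $\int_\eta^1 G_s^2\le\int_0^1 G_s^2\le C$ uniformly in $\eta$.

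The routine parts are the elementary integrations for $|x|\le1$. The main obstacle is the sharp large-$|x|$ behaviour: extracting the exponential rate $e^{-|x|}$ together with the precise prefactor $|x|^{(s-2)/2}$ requires the rescaling $t=\tfrac{|x|}{2}\tau$ and the uniform bound $\int_0^\infty\tau^{\frac{s-3}{2}}e^{-\frac{|x|}{2}\frac{(\tau-1)^2}{\tau}}\,\mathrm{d}\tau\lesssim|x|^{-1/2}$, and one must check that the polynomial singularity $\tau^{(s-3)/2}$ (resp. $\tau^{(s-5)/2}$ for $G_s'$) at $\tau=0$, present whenever $s<3$ (resp. $s<5$), is harmless because there $(\tau-1)^2/\tau\sim1/\tau$ forces super-polynomial decay of the exponential factor.
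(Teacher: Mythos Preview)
Your argument is correct and complete. It differs from the paper's proof in one essential respect: the paper does not work directly with the subordination integral but instead rewrites
\[
G_s(x)=c_1(s)\,|x|^{\frac{s-1}{2}}K_{\frac{1-s}{2}}(|x|),\qquad
G_s'(|x|)=-c_2(s)\,|x|^{\frac{s-1}{2}}K_{\frac{3-s}{2}}(|x|),
\]
and then simply quotes the classical asymptotics of the modified Bessel function $K_\nu$ (namely $K_\nu(r)\lesssim r^{-|\nu|}$ or $\log(1/r)+1$ for $r\le1$, and $K_\nu(r)\lesssim r^{-1/2}e^{-r}$ for $r>1$) from standard references. All of \eqref{eq:3.1}--\eqref{eq:3.2} then drop out in one line. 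Your route is more hands-on: the three-piece splitting for $|x|\le1$ and the Laplace rescaling $t=\tfrac{|x|}{2}\tau$ for $|x|>1$ amount to re-deriving precisely those Bessel asymptotics in this setting, which makes the proof self-contained at the cost of some length. For \eqref{eq:3.4} you observe directly that $(\log(1/|x|)+1)^2$ is integrable on $(0,1)$, whereas the paper passes through the crude inequality $\log(1/|y|)\le C|y|^{-2/5}$; your version is in fact a little cleaner.
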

	\begin{proof}
		Clearly one can write
		{\small \begin{equation}\label{eq:3.5}
				G_{s}(x)=c_1(s)|x|^{\frac{s-1}{2}}K_{\frac{1-s}{2}}(|x|)
		\end{equation}}
		and
		{\small \begin{equation}\label{eq:3.6}
				G_{s}^{\prime}(|x|)=-c_2(s)|x|^{\frac{s-1}{2}}K_{\frac{3-s}{2}}(|x|),
		\end{equation}}
		where $K_\nu$ is known as a modified Bessel function of the third kind given by
		\begin{equation*}
			\begin{aligned}
				K_\nu(r)=\dfrac{1}{2}\Big(\frac{r}{2}\Big)^\nu\int_0^\infty\dfrac{e^{-t-\frac{r^2}{4t}}}{t^{\nu+1}}\, \mathrm{~d}t\quad\text{for } r>0.
			\end{aligned}
		\end{equation*}	
		$K_\nu(r)$ is analytic on
		$r$ except at $r = 0$ and  even on  $\nu$, and has the following asymptotic formulae (see \cite{MR1411441, MR0374877}):
		\begin{equation}\label{eq:3.7}
			\begin{aligned}
				&K_{\nu}(r) \leq C(\nu)\left\{\begin{array}{ll}
					 r^{-\nu}, &  0<r\leq 1,\ \nu>0 \\
					\log (1 / r)+1, &  0<r\leq 1,\ \nu=0 \\
					r^{-1/2} e^{-r}, &  r>1,\ \nu\geq 0.
				\end{array}\right.
			\end{aligned}
		\end{equation}
		
		The estimates \eqref{eq:3.1}-\eqref{eq:3.2} follow from \eqref{eq:3.5}-\eqref{eq:3.7} directly. The estimate \eqref{eq:3.3} can be easily verified by \eqref{eq:3.2}.
		In order to estimate \eqref{eq:3.4}, we spilt the integral as follows:
		\begin{equation}\label{eq:3.8}
			\int_{\eta}^{\infty} G_{s}^{2}(x) \mathrm{~d}x =\int_{\eta}^{1} G_{s}^{2}(x) \mathrm{~d}x+\int_{1}^{\infty} G_{s}^{2}(x) \mathrm{~d}x.
		\end{equation}
		It  needs only to take care of the first integral on the RHS of \eqref{eq:3.8} when \(s=1\). Indeed, noticing that
		\begin{equation}\label{eq:3.9}
			\begin{aligned}
				\log\bigg(\frac{1}{|y|}\bigg)\leq \frac{C}{|y|^{\frac{2}{5}}}\quad \text{for} \ 0<|y|<1,
			\end{aligned}
		\end{equation}
		one may estimate
		\begin{equation*}
			\int_{\eta}^{1} G_{1}^{2}(x) \mathrm{~d}x\leq C \int_{|y|<1}\bigg(\frac{1}{|y|^{\frac{2}{5}}} +1\bigg)^{2}\mathrm{~d}y\leq C.
		\end{equation*}
		
	\end{proof}	
	
	Next, we recall the standard Gagliardo-Nirenberg interpolation inequality.
	\begin{lemma}\label{lem:2}
		Let $1 \leq q,~ r \leq \infty,~ j,~ m \in N$  with  $j / m \leq \theta \leq 1$. If
		$$\frac{1}{p}=j+\theta\bigg(\frac{1}{r}-m\bigg)+\frac{1-\theta}{q},$$
		then
		\begin{equation*}
			\|\partial_{x}^{j} u\|_{L^{p}} \leq C\|\partial_{x}^{m} u\|_{L^{r}}^{\theta}\|u\|_{L^{q}}^{1-\theta},
		\end{equation*}
		where the constant \(C\) depends only on  $j, m, r, p, q, \theta $.
	\end{lemma}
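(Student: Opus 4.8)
This is the one--dimensional Gagliardo--Nirenberg inequality, and the plan is to reconstruct the classical argument (so that the constant remains traceable). I would first reduce, by density of $C_0^\infty(\R)$ and Fatou's lemma, to $u\in C_0^\infty(\R)$ --- there being nothing to prove when the right-hand side is infinite --- and record that the exponent identity $\tfrac1p=j+\theta(\tfrac1r-m)+\tfrac{1-\theta}q$ is precisely the relation under which $\norm{\partial_x^j u}_{L^p}$ and $\norm{\partial_x^m u}_{L^r}^{\theta}\norm{u}_{L^q}^{1-\theta}$ have the same homogeneity under the rescaling $u\mapsto u(\lambda\,\cdot)$; this both explains why such an estimate can hold and permits the normalisation $\norm{\partial_x^m u}_{L^r}=\norm{u}_{L^q}=1$.

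\textbf{The basic one--derivative estimate.} The engine of the proof is the case $m=2$, $j=1$, $\theta=\tfrac12$:
\begin{equation*}
\norm{u'}_{L^p}\le C\,\norm{u''}_{L^r}^{1/2}\norm{u}_{L^q}^{1/2},\qquad \tfrac1q+\tfrac1r=\tfrac2p,\quad 1\le q,r\le\infty.
\end{equation*}
For $2\le p<\infty$ I would use the identity $\int_\R\abs{u'}^p\,\diff x=-(p-1)\int_\R u\,\abs{u'}^{p-2}u''\,\diff x$ and Hölder's inequality in the exponents $q$, $r$, $\tfrac{p}{p-2}$, whose reciprocals sum to $1$ by the scaling relation; since $\norm{\abs{u'}^{p-2}}_{L^{p/(p-2)}}=\norm{u'}_{L^p}^{p-2}$, the factor $\norm{u'}_{L^p}^{p-2}$ is absorbed on the left and one gets $C=\sqrt{p-1}$ (when $p=2$ this is simply $\norm{u'}_{L^2}^2=-\int_\R uu''$). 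The range $1<p<2$ then follows by Hölder interpolation in $x$ between the $p=2$ case and the endpoint $p=1$, which forces $q=r=1$ and follows from the pointwise bound $\abs{u'(x)}\le\int_x^{x+h}\abs{u''}+\tfrac1h\big(\abs{u(x)}+\abs{u(x+h)}\big)$ after integrating in $x$ and optimising the step $h>0$; the other endpoint $p=\infty$ forces $q=r=\infty$ and is Landau's inequality $\norm{u'}_{L^\infty}^2\le 2\norm{u}_{L^\infty}\norm{u''}_{L^\infty}$, a consequence of Taylor's formula.

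\textbf{Bootstrap to general $j,m,\theta$.} Applying the basic estimate to $\partial_x^{k-1}u$ for $k=1,\dots,m-1$, with the exponents $p_k$ chosen so that $k\mapsto 1/p_k$ is affine with $1/p_0=1/q$ and $1/p_m=1/r$ (which is exactly what keeps the scaling relation valid along the chain), gives $\norm{\partial_x^k u}_{L^{p_k}}\le C\,\norm{\partial_x^{k+1}u}_{L^{p_{k+1}}}^{1/2}\norm{\partial_x^{k-1}u}_{L^{p_{k-1}}}^{1/2}$. Writing $a_k=\log\norm{\partial_x^k u}_{L^{p_k}}$ (after normalisation), this reads $a_k\le\tfrac12(a_{k-1}+a_{k+1})+\log C$, so the finite sequence $(a_k)_{0\le k\le m}$ is approximately convex and hence lies below the affine interpolant of its endpoints, up to an additive error controlled by $m$ and $C$; evaluating at $k=j$ yields the lemma in the case $\theta=j/m$. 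The remaining admissible values $\theta\in(j/m,1]$ (those for which $1/p\ge0$) lie between $\theta=j/m$ and the largest admissible value --- the one with $1/p=0$, which is again a Gagliardo--Nirenberg inequality with target $L^\infty$, proved by the same integration-by-parts device applied to $\abs{\partial_x^j u}^{1+\sigma}$ for a suitable $\sigma>0$ together with the previous steps --- and are reached by Hölder interpolation (log-convexity of $L^p$-norms), the defining relation being affine in $(\theta,1/p)$ so that admissibility is preserved.

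\textbf{Main difficulty.} The genuinely delicate point is the bookkeeping of the intermediate exponents $p_k$ in the bootstrap: verifying that they can be chosen compatibly with the scaling relation at each link, and treating the degenerate links where $p_k\in\{1,\infty\}$, which is the origin of the familiar exceptional case in the sharp statement. The rest amounts to Hölder's inequality, integration by parts, and the elementary discrete convexity lemma.
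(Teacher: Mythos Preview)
Your proof outline is a faithful reconstruction of the classical one--dimensional Gagliardo--Nirenberg argument (integration by parts for the basic $(j,m,\theta)=(1,2,\tfrac12)$ step, Landau's inequality at $p=\infty$, the Taylor/step-size trick at $p=1$, then the discrete convexity bootstrap and H\"older interpolation in $\theta$), and it is correct at the level of detail given. The only place that would need a sentence of care in a full write-up is the passage from $p\ge 2$ to $1<p<2$: the direct H\"older interpolation between the $p=1$ and $p=2$ inequalities does not by itself recover an arbitrary admissible pair $(q,r)$ with $\tfrac1q+\tfrac1r=\tfrac2p$, so one either extends the integration-by-parts identity to $1<p<2$ (after a routine regularisation of $|u'|^{p-2}$), or interpolates more carefully. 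This is a standard fix and does not affect the overall strategy.

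As for comparison with the paper: the paper gives \emph{no} proof of this lemma. It is merely quoted as ``the standard Gagliardo--Nirenberg interpolation inequality'' and then invoked once, in the proof of Theorem~2.1, with the specific parameters $(j,m,\theta,p,q,r)=(1,2,\tfrac23,\infty,\infty,2)$ to obtain $\|\partial_x^2 u\|_{L^\infty}\le C\|\partial_x u\|_{L^\infty}^{1/3}\|\partial_x^3 u\|_{L^2}^{2/3}$. Your outline therefore supplies strictly more than the paper does; for the paper's purposes one could even replace the full lemma by the single line
\[
\|\partial_x^2 u\|_{L^\infty}^3=\sup_x\Big|\tfrac32\!\int_{-\infty}^{x}(\partial_x^2 u)^2\,\partial_x^3 u\,\diff y\Big|\le \tfrac32\|\partial_x^2 u\|_{L^\infty}\|\partial_x^2 u\|_{L^2}\|\partial_x^3 u\|_{L^2},
\]
together with $\|\partial_x^2 u\|_{L^2}\le \|\partial_x u\|_{L^\infty}^{1/2}\|\partial_x^3 u\|_{L^2}^{1/2}$ (from $\int(\partial_x^2 u)^2=-\int\partial_x u\,\partial_x^3 u$), which already yields the required instance.
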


	\section{Reformulation in Lagrangian coordinates}

	%\begin{proof}[Proof of Theorem \ref{th:2.1}]	
	It is standard to show that there exists some positive \(T\) such that \eqref{eq:1.1} admits a solution $u\in C([0,T); H^3(\mathbb{R}))$, see for example \cite{MR533234}. In what follows, we will assume $T$ is the maximal
	existence time of the solution \(u\).
	
	Denote by $X(t , x)$ the position of the particle \(x\) at time \(t\)
	\begin{equation*}
		\begin{aligned}
			\frac{\mathrm{d} X}{\mathrm{d} t}(t , x)=u^p(t, X(t , x)) \quad \text { and } \quad X(0 , x)=x.
		\end{aligned}
	\end{equation*}
	Let
	\begin{equation*}
		v_{n}(t , x)=(\partial_{x}^{n-1} u)(t, X(t , x)) \quad  \text{for}\  n=1,2.
	\end{equation*}
	Then it follows from \eqref{eq:1.1} that
	\begin{equation}\label{eq:4.1}
		\frac{\mathrm{d} v_{1}}{\mathrm{d} t} = -K_{1}^{s}(t, x),
	\end{equation}
	where
	\begin{equation}\label{eq:4.2}
		K_{1}^{s}(t,x)  =\int_{\mathbb{R}} G_{s}(y) \partial_{x} u(t, X(t, x)-y) \mathrm{~d} y
	\end{equation}
	and
	\begin{equation}\label{eq:4.3}
		\frac{\mathrm{d} v_{2}}{\mathrm{d} t} = -pv_{1}^{p-1}v_{2}^2 - K_{2}^{s}(t, x),
	\end{equation}
	in which
	\begin{equation}\label{eq:4.4}
		K_{2}^{s}(t, x) =\int_{\mathbb{R}} G_{s}(y) \partial_{x}^{2} u(t, X(t, x)-y) \mathrm{~d} y.
	\end{equation}
	
	Set
	\begin{equation}\label{eq:4.5}
		m(t)=\inf _{x \in \mathbb{R}} v_{2}(t , x)=\inf _{x \in \mathbb{R}}(\partial_{x} u)(x, t)=\colon m(0) q(t)^{-1}.
	\end{equation}
	To prove Theorem \ref{th:2.1} -\ref{th:2.4}, it suffices to show \(q(t)\rightarrow 0\) as \(t\rightarrow T^-\), whose key ingredient is to prove the following:
	\begin{equation}\label{eq:4.6}
		|K_{2}^{s}(t , x)|<\delta^{2} m^{2}(t) \quad \text{for}\ (t,x) \in [0,T)\times \mathbb{R}.
	\end{equation}

	\section{Proof of Theorem \ref{th:2.1}}

	Note that \(p=1\). In what follows, we always assume $\delta\in (0,1)$ is a sufficiently small number.

	We first check that \eqref{eq:4.6} holds at $t=0$ by the assumption $\eqref{eq:2.1}_1$. To this end, one writes $K_{2}^{s}(0, x)$ as follows:
	\begin{equation}\label{eq:5.1}
		K_{2}^{s}(0,x)
		=\underbrace{\int_{|y|<1} G_{s}(y) u_{0}^{\prime \prime}(x-y) \mathrm{~d} y}_{I_1}+\underbrace{\int_{|y| \geq 1} G_{s}(y) u_{0}^{\prime \prime}(x-y) \mathrm{~d} y}_{I_2}.
	\end{equation}
	Applying  $\eqref{eq:3.1}_1$ to \(I_1\) yields
	\begin{equation}\label{eq:5.2}
		|I_1| \leq C \|u_{0}^{\prime \prime}\|_{L^{\infty}}\bigg|\int_{|y|<1}\frac{1}{|y|^{1-s}} \mathrm{~d} y\bigg| \leq C\|u_{0}\|_{H^{3}},
	\end{equation}
	where the Sobolev embedding \(H^1(\mathbb{R})\hookrightarrow L^\infty(\mathbb{R})\) has been used.
	To handle \(I_2\), one may integrate by parts to deduce
	\begin{equation}\label{eq:5.3}
		\begin{aligned}
			|I_2|
			&\leq|G_{s}(1)[u_{0}^{\prime}(-1-y)-u_{0}^{\prime}(1-y)]|
			+\bigg|\int_{|y| \geq 1} G_{s}^{\prime}(y) u_{0}^{\prime}(x-y) \mathrm{~d} y\bigg| \\
			&\leq C\|u_{0}^{\prime}\|_{L^{\infty}}
			\leq C C_{1},
		\end{aligned}
	\end{equation}
	where one has used \eqref{eq:3.3}.
	
	It follows from $\eqref{eq:2.1}_1$ and \eqref{eq:5.1}-\eqref{eq:5.3} that
	\begin{equation}\label{eq:5.4}
		|K_{2}^{s}(0,x) |\leq C(\|u_{0}\|_{H^{3}} + C_{1}) < \delta^{2} m^{2}(0)\quad \text{for}\ x \in \mathbb{R}.
	\end{equation}

	Next, we will show \eqref{eq:4.6} for $t\neq 0$ by the assumptions $\eqref{eq:2.1}_1$-$\eqref{eq:2.1}_3$ together with  an argument of contradiction.
	Suppose that \eqref{eq:4.6} is not true, then  there exist some \(T_{1} \in(0, T)\) and \( x_{0} \in \mathbb{R}\) such that
	\begin{equation}\label{eq:5.5}
		\boxed{|K_{2}^{s}(T_{1}, x_{0})|=\delta^{2} m^{2}(T_{1})}.
	\end{equation}
	Thus, without loss of generality, we may assume by continuity that
	\begin{equation*}
		|K_{2}^{s}(t,x)| \leq \delta^{2} m^{2}(t) \quad \text{for}\ (t,x) \in (0,T_{1}]\times \mathbb{R}.
	\end{equation*}
	
	For \((t,x) \in (0,T_{1}]\times \mathbb{R}\), set
	\begin{equation}\label{eq:5.6}
		\Sigma_{\delta}(t)=\{x \in \mathbb{R}: v_{2}(t , x) \leq(1-\delta) m(t)\}
	\end{equation}
	and
	\begin{equation}\label{eq:5.7}
		v_{2}(t, x)\bigg(=\frac{v_{1}(0)}{1+v_{1}(0) \int_{0}^{t}[1+v_{1}^{-2} K_{2}^{s}(\tau, x)] \mathrm{~d} \tau}\bigg)=\colon  m(0) r^{-1}(t, x),
	\end{equation}
	where the first equality in \eqref{eq:5.7} comes from solving \eqref{eq:4.3}.

	Then, one has the following lemmas, whose proofs are quite similar with that in \cite{MR1261868, MR4409228}, so we omit it.
	
	\begin{lemma}~\label{lem:3}
		For fixed \(\delta\), the set $\Sigma_{\delta}(t)$ is decreasing in \(t\), namely
		$\Sigma_{\delta}(t_{2}) \subset \Sigma_{\delta}(t_{1})$  whenever  $0 \leq t_{1} \leq t_{2} \leq T_{1} $.
	\end{lemma}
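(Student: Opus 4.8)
grows* in $t$) — the opposite of the lemma's $\Sigma_\delta(t_2)\subset\Sigma_\delta(t_1)$.) Reconsider from scratch.

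The plan is to prove the equivalent statement that the complement $\Sigma_{\delta}(t)^{c}=\{x: v_{2}(t,x)>(1-\delta)m(t)\}$ is \emph{forward} invariant in $t$; taking complements then gives $\Sigma_{\delta}(t_{2})\subset\Sigma_{\delta}(t_{1})$ for $t_{1}\le t_{2}$, which is exactly the assertion. Throughout I would use that $m(t)<0$ on $[0,T_{1}]$: since $p=1$, equation \eqref{eq:4.3} reads $\frac{\mathrm{d}v_{2}}{\mathrm{d}t}=-v_{2}^{2}-K_{2}^{s}$, and evaluating along a characteristic realizing the infimum together with the standing bound $|K_{2}^{s}(t,\cdot)|\le\delta^{2}m^{2}(t)$ yields $\frac{\mathrm{d}m}{\mathrm{d}t}\le-(1-\delta^{2})m^{2}<0$, so $m$ is strictly decreasing and stays negative because $m(0)=\inf_{x}u_{0}'(x)<0$.

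Fix a label $x$ with $v_{2}(t_{1},x)>(1-\delta)m(t_{1})$ and set $g(t)=v_{2}(t,x)-(1-\delta)m(t)$, so $g(t_{1})>0$ and the goal is $g>0$ on $[t_{1},T_{1}]$. The map $t\mapsto v_{2}(t,x)$ is $C^{1}$ with $\partial_{t}v_{2}=-v_{2}^{2}-K_{2}^{s}(t,x)$. For $m(t)=\inf_{x}v_{2}(t,x)$ I would invoke the standard lemma (as in \cite{MR1261868, MR4409228}) that $m$ is locally Lipschitz with $m'(t)=-m^{2}(t)-K_{2}^{s}(t,\xi(t))$ for a.e.\ $t$, where $\xi(t)$ realizes the infimum. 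Hence $g$ is absolutely continuous, and for a.e.\ $\tau$,
\begin{equation*}
g'(\tau)=-v_{2}^{2}+(1-\delta)m^{2}-K_{2}^{s}(\tau,x)+(1-\delta)K_{2}^{s}(\tau,\xi(\tau)).
\end{equation*}
Writing $v_{2}=g+(1-\delta)m$, so $v_{2}^{2}=g^{2}+2(1-\delta)mg+(1-\delta)^{2}m^{2}$, and bounding the two kernel terms by $\delta^{2}(2-\delta)m^{2}$, I obtain the differential inequality
\begin{equation*}
g'(\tau)\ge -g^{2}-2(1-\delta)mg+\delta\,(1-3\delta+\delta^{2})\,m^{2}.
\end{equation*}

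With this inequality the barrier argument closes. Suppose $g$ vanishes somewhere on $(t_{1},T_{1}]$ and let $t^{*}$ be the first such time, so $g>0$ on $[t_{1},t^{*})$ and $g(t^{*})=0$. On $[t_{1},t^{*})$ one has $m<0$ and $g>0$, whence $-2(1-\delta)mg\ge 0$; moreover $\delta(1-3\delta+\delta^{2})>0$ for $\delta$ small (it suffices that $\delta<(3-\sqrt{5})/2$). By continuity of $m$ and $g\to 0$ as $\tau\to t^{*-}$, there is $\eta>0$ with $g^{2}\le\tfrac12\,\delta(1-3\delta+\delta^{2})m^{2}(t^{*})$ on $(t^{*}-\eta,t^{*})$, so $g'(\tau)\ge c_{0}:=\tfrac12\,\delta(1-3\delta+\delta^{2})m^{2}(t^{*})>0$ a.e.\ there. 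Integrating, $g(t^{*})\ge g(t^{*}-\eta)+c_{0}\eta>0$, contradicting $g(t^{*})=0$. Thus $g>0$ on $[t_{1},T_{1}]$, giving the desired forward invariance of $\Sigma_{\delta}(t)^{c}$.

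The step I expect to be the main obstacle is the non-differentiability of $m(t)=\inf_{x}v_{2}(t,x)$: the clean formula $m'(t)=-m^{2}-K_{2}^{s}(t,\xi(t))$ holds only almost everywhere and presupposes that the infimum is attained (which holds here because $v_{2}(t,\cdot)=\partial_{x}u(t,X(t,\cdot))\to 0$ at spatial infinity for $H^{3}$ data, while $m(t)<0$), so the entire argument must be carried out through absolute continuity rather than pointwise derivatives. This is precisely the technical ingredient imported from \cite{MR1261868, MR4409228}; the remaining work — tracking signs because $m<0$ while $g>0$, and checking $1-3\delta+\delta^{2}>0$ — is elementary.
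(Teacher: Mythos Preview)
Your proof is correct and follows essentially the same route as the argument in \cite{MR1261868, MR4409228} that the paper defers to: one shows forward invariance of the complement $\Sigma_\delta(t)^c$ by deriving a differential inequality for the gap $g(t)=v_2(t,x)-(1-\delta)m(t)$, using the a.e.\ differentiability of $m(t)$ (Constantin--Escher type lemma) together with the standing bound $|K_2^s|\le\delta^2 m^2$, and closing with a first-hitting-time contradiction. Your bookkeeping of the coefficient $\delta(1-3\delta+\delta^2)$ and the handling of absolute continuity are exactly the technical points those references emphasize.
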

	
	\begin{lemma}\label{lem:4}
		We have
		\begin{equation}\label{eq:5.8}
			(1+\delta) m(0) \leq\frac{\mathrm{~d} }{\mathrm{~d}t}r(t, x) \leq(1-\delta) m(0) \quad \mathrm{for}\ x \in\Sigma_{\delta}(T_1),
		\end{equation}
		\begin{equation}\label{eq:5.9}
			q(t) \leq r(t, x) \leq\frac{1}{1-\delta} q(t) \quad \mathrm{for}\ x \in\Sigma_{\delta}(T_1)
		\end{equation}
		and
		\begin{equation}\label{eq:5.10}
			0<q(t) \leq 1.
		\end{equation}
	\end{lemma}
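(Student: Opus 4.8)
The plan is to prove the three estimates in the order \eqref{eq:5.10}, then \eqref{eq:5.8}, then \eqref{eq:5.9}, working throughout on the interval $[0,T_1]$ on which, by \eqref{eq:5.4}--\eqref{eq:5.5} and continuity, the a priori bound $|K_2^s(t,x)|\le\delta^2 m^2(t)$ holds for all $x\in\R$. Since $u\in C([0,T);H^3(\R))$, for each $t<T$ the function $v_2(t,\cdot)=(\partial_x u)(t,X(t,\cdot))$ is continuous, bounded, and tends to $0$ at $\pm\infty$; as $m(0)=\inf_{x}u_0'(x)<0$, the infimum $m(t)$ is therefore attained at some $\xi(t)\in\R$ and is finite. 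By the classical lemma on differentiating the infimum of a family that is $C^1$ in time (see e.g. \cite{MR1261868, MR4409228}), $t\mapsto m(t)$ is locally Lipschitz with, for a.e.\ $t$,
\[
\frac{\mathrm{d}}{\mathrm{d}t}m(t)=\frac{\mathrm{d}v_2}{\mathrm{d}t}(t,\xi(t))=-v_2^2(t,\xi(t))-K_2^s(t,\xi(t))=-m^2(t)-K_2^s(t,\xi(t)),
\]
using \eqref{eq:4.3} with $p=1$. On the set of times where $m(t)<0$ (which contains $t=0$) one has $\xi(t)\in\Sigma_\delta(t)$ since $m(t)\le(1-\delta)m(t)$ when $m(t)\le0$, and the a priori bound gives $\frac{\mathrm{d}}{\mathrm{d}t}m(t)\le -(1-\delta^2)m^2(t)\le0$. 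A standard continuity/bootstrap argument then shows $m$ is nonincreasing on all of $[0,T_1]$ and stays strictly negative; consequently $m(t)\le m(0)<0$ and $q(t)=m(0)/m(t)\in(0,1]$, which is \eqref{eq:5.10}.

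For \eqref{eq:5.8} I would differentiate the defining relation $r(t,x)=m(0)/v_2(t,x)$, which is legitimate for $x\in\Sigma_\delta(T_1)$ since there $v_2(t,x)\le(1-\delta)m(t)<0$ does not vanish, and use \eqref{eq:4.3} with $p=1$ to get
\[
\frac{\mathrm{d}}{\mathrm{d}t}r(t,x)=-\frac{m(0)}{v_2^2(t,x)}\,\frac{\mathrm{d}v_2}{\mathrm{d}t}(t,x)=m(0)\Big(1+\frac{K_2^s(t,x)}{v_2^2(t,x)}\Big).
\]
Fix $x\in\Sigma_\delta(T_1)$; by Lemma \ref{lem:3}, $x\in\Sigma_\delta(t)$ for every $t\in[0,T_1]$, so $v_2^2(t,x)\ge(1-\delta)^2 m^2(t)$, and together with the a priori bound,
\[
\Big|\frac{K_2^s(t,x)}{v_2^2(t,x)}\Big|\le\frac{\delta^2 m^2(t)}{(1-\delta)^2 m^2(t)}=\frac{\delta^2}{(1-\delta)^2}\le\delta
\]
for $\delta$ sufficiently small. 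Hence $1-\delta\le 1+K_2^s/v_2^2\le1+\delta$, and since $m(0)<0$ multiplication by $m(0)$ reverses the inequalities, giving $(1+\delta)m(0)\le\frac{\mathrm{d}}{\mathrm{d}t}r(t,x)\le(1-\delta)m(0)$, i.e.\ \eqref{eq:5.8}.

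Finally, \eqref{eq:5.9} is purely algebraic: for $x\in\Sigma_\delta(T_1)\subset\Sigma_\delta(t)$ one has $m(t)\le v_2(t,x)\le(1-\delta)m(t)<0$; taking reciprocals of these negative quantities yields $\frac{1}{(1-\delta)m(t)}\le\frac{1}{v_2(t,x)}\le\frac{1}{m(t)}$, and multiplying by $m(0)<0$ (which again reverses the inequalities) gives $\frac{m(0)}{m(t)}\le\frac{m(0)}{v_2(t,x)}\le\frac{1}{1-\delta}\frac{m(0)}{m(t)}$, i.e.\ $q(t)\le r(t,x)\le\frac{1}{1-\delta}q(t)$.

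The step I expect to require the most care is the first one — the lemma on almost-everywhere differentiability of $t\mapsto\inf_x v_2(t,x)$ with derivative read off at a minimizing point, together with the verification that a minimizer exists (resting on the spatial decay of $u\in H^3(\R)$ and the strict sign $m(0)<0$). After that, everything is elementary manipulation of the explicit Riccati solution \eqref{eq:5.7}, the monotonicity of $\Sigma_\delta$ from Lemma \ref{lem:3}, and the standing bound on $K_2^s$; the one subtlety is that the sign $m(t)<0$ invoked repeatedly is itself part of the conclusion, so I would organize the first step as a bootstrap over $\{t\in[0,T_1]:m(\tau)<0\ \text{for all}\ \tau\le t\}$.
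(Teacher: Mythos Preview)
Your argument is correct and is precisely the standard one the paper defers to \cite{MR1261868, MR4409228}: the paper omits the proof entirely, and the three ingredients you use---the a.e.\ differentiability of $m(t)=\inf_x v_2(t,x)$ at a minimizer, the Riccati identity $r_t=m(0)\bigl(1+K_2^s/v_2^2\bigr)$ combined with $|K_2^s|\le\delta^2 m^2(t)$ and the monotonicity of $\Sigma_\delta$, and the algebraic passage from $m(t)\le v_2\le(1-\delta)m(t)$ to \eqref{eq:5.9}---are exactly what those references do. The only cosmetic remark is that the factor $\delta^2/(1-\delta)^2$ in your bound on $|K_2^s/v_2^2|$ is indeed $\le\delta$ for $\delta$ small (as you note), which is how the clean constants $1\pm\delta$ in \eqref{eq:5.8} are obtained; this matches the convention in the cited works.
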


	\begin{lemma}\label{lem:5} We have 		
		\begin{equation}\label{eq:5.11}
			\int_{0}^{t} q(\tau)^{-\gamma}\mathrm{~d} \tau \leq -(1- \delta)^{-(\gamma +1)}(1-\gamma )^{-1} m^{-1}(0)\big[(1-\delta)^{\gamma -1}-q(t)^{1-\gamma}\big],
		\end{equation}		
		where  $\gamma\in (0,1)\cup (1,\infty)$, and
		
		\begin{equation}\label{eq:5.12}
			\int_{0}^{t} q(\tau)^{-1}\mathrm{~d} \tau \leq-(1- \delta)^{-2} m^{-1}(0)[-\log (1- \delta)-\log q(t)].
		\end{equation}
		
	\end{lemma}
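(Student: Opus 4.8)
The plan is to run the whole estimate along a single Lagrangian characteristic and reduce both inequalities to elementary one‑variable calculus for the monotone function $\tau\mapsto r(\tau,x)$. First I would fix a point $x\in\Sigma_{\delta}(T_{1})$; this set is nonempty because $m(t)<0$ forces $(1-\delta)m(t)>m(t)=\inf_{x}v_{2}(t,x)$, so there are always points with $v_{2}(t,x)\le(1-\delta)m(t)$. By Lemma \ref{lem:3} the family $\Sigma_{\delta}(\cdot)$ is decreasing, hence $x\in\Sigma_{\delta}(\tau)$ for every $\tau\in[0,T_{1}]$, and therefore the sandwich bound \eqref{eq:5.9} and the derivative bound \eqref{eq:5.8} are both available at every $\tau$ in the range of integration. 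Since $\gamma>0$ and $q(\tau)\ge(1-\delta)r(\tau,x)$ by \eqref{eq:5.9}, we get the pointwise comparison $q(\tau)^{-\gamma}\le(1-\delta)^{-\gamma}r(\tau,x)^{-\gamma}$, which moves the whole problem onto $r(\cdot,x)$.

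Next I would exploit that $r(\cdot,x)$ is smooth and strictly decreasing: by \eqref{eq:5.8} we have $\partial_{\tau}r\le(1-\delta)m(0)<0$ (recall $m(0)<0$), hence $-\partial_{\tau}r\ge(1-\delta)(-m(0))>0$ and so $1\le\dfrac{-\partial_{\tau}r}{(1-\delta)(-m(0))}$. Multiplying $r^{-\gamma}$ by this factor and using that $(-\partial_{\tau}r)\,r^{-\gamma}=-\partial_{\tau}\!\big(\tfrac{r^{1-\gamma}}{1-\gamma}\big)$ for $\gamma\neq1$ (and $(-\partial_{\tau}r)\,r^{-1}=-\partial_{\tau}\log r$ for $\gamma=1$), the integral telescopes:
$$\int_{0}^{t}r(\tau,x)^{-\gamma}\,d\tau\;\le\;\frac{1}{(1-\delta)(-m(0))}\cdot\frac{r(0,x)^{1-\gamma}-r(t,x)^{1-\gamma}}{1-\gamma},$$
with the obvious logarithmic analogue $\log r(0,x)-\log r(t,x)$ in the numerator when $\gamma=1$.

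It then remains to replace the endpoint values $r(0,x)$ and $r(t,x)$ by $\delta$ and $q(t)$. Since $q(0)=1$, evaluating \eqref{eq:5.9} at $\tau=0$ gives $1\le r(0,x)\le(1-\delta)^{-1}$, and at $\tau=t$ it gives $r(t,x)\ge q(t)$. Feeding these into the displayed inequality — together with the reduction $q(\tau)^{-\gamma}\le(1-\delta)^{-\gamma}r^{-\gamma}$ — produces $r(0,x)^{1-\gamma}-r(t,x)^{1-\gamma}\le(1-\delta)^{\gamma-1}-q(t)^{1-\gamma}$ when $\gamma\in(0,1)$ and the reverse inequality when $\gamma>1$; after multiplying through by the constant $-(1-\delta)^{-(\gamma+1)}(1-\gamma)^{-1}m^{-1}(0)$, whose own sign is $\gamma$‑dependent, both cases collapse to exactly \eqref{eq:5.11}, and the $\gamma=1$ computation gives \eqref{eq:5.12} with $-\log(1-\delta)-\log q(t)$ in place of the bracket.

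I expect the only delicate point to be the sign bookkeeping in this last step: $m(0)<0$, and $1-\gamma$ may have either sign, so the monotonicity of $\rho\mapsto\rho^{1-\gamma}$, the direction of each endpoint estimate, and the sign of the multiplicative constant all flip between the cases $\gamma\in(0,1)$ and $\gamma>1$, and one must verify they flip compatibly so that \eqref{eq:5.11} comes out with the stated direction. Apart from this, the argument uses no analytic input beyond Lemmas \ref{lem:3} and \ref{lem:4} and calculus along a fixed characteristic.
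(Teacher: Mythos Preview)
Your argument is correct and is exactly the standard route: the paper itself omits the proof of Lemma~\ref{lem:5}, referring to \cite{MR1261868, MR4409228}, and what you wrote is precisely the computation those references carry out---bound $q^{-\gamma}$ by $(1-\delta)^{-\gamma}r^{-\gamma}$ via \eqref{eq:5.9}, convert the $\tau$-integral of $r^{-\gamma}$ into an exact derivative using the lower bound on $-\partial_\tau r$ from \eqref{eq:5.8}, and then feed in the endpoint bounds $1\le r(0,x)\le(1-\delta)^{-1}$ and $r(t,x)\ge q(t)$. Your sign bookkeeping in the final step is also right.
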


	We first claim that
	\begin{equation}\label{eq:5.13}
		\|v_{1}(t)\|_{L^{\infty}}=\|u(t)\|_{L^{\infty}}<C_{0} \quad  \text{for all} \ t \in[0, T_{1}]
	\end{equation}
	and
	\begin{equation}\label{eq:5.14}
		\|v_{2}(t)\|_{L^{\infty}}=\|\partial_{x} u(t)\|_{L^{\infty}}<C_{1} q(t)^{-1} \quad  \text{for all} \ t \in[0, T_{1}],
	\end{equation}
	where  $C_{0}\ \text{and}\ C_{1}$  satisfy \eqref{eq:2.2}.
	
	First, when \(t=0\), observe that
	\begin{equation*}
		\|v_{1}(0)\|_{L^{\infty}}=\|u_{0}\|_{L^{\infty}} < C_{0}
	\end{equation*}
	and
	\begin{equation*}
		\|v_{2}(0)\|_{L^{\infty}}=\|u_{0}^{\prime}\|_{L^{\infty}} < C_{1} q(t)^{-1}.
	\end{equation*}
	Then, it remains to show \eqref{eq:5.13} and \eqref{eq:5.14} when \(t\neq 0\), which will be achieved by the contradiction argument again.  	
	Assume that there exists  $T_{2}\in (0,T_{1}]$ such that \eqref{eq:5.13} and \eqref{eq:5.14} hold for all $t \in[0, T_{2})$, but either of them fails at  $t=T_{2}$, that is
	\begin{equation}\label{eq:5.15}
		\boxed{\text{either}\ \|u(T_{2})\|_{L^{\infty}}=C_{0}\   \text{or}\  \|\partial_{x} u(T_{2})\|_{L^{\infty}}=C_{1} q^{-1}(t)}.
	\end{equation}
	Hence, by continuity, one has
	\begin{equation}\label{eq:5.16}
		\|v_{1}(t)\|_{L^{\infty}}=\|u(t)\|_{L^{\infty}} \leq C_{0} \quad  \text{for} \  t \in[0, T_{2}]
	\end{equation}
	and
	\begin{equation}\label{eq:5.17}
		\|v_{2}(t)\|_{L^{\infty}}=\|\partial_{x} u(t)\|_{L^{\infty}} \leq C_{1} q(t)^{-1} \quad  \text{for} \ t \in[0, T_{2}].
	\end{equation}
	
	\underline{\bf{Estimates on \(K_{1}^{s}(t, x)\)}.}  We split the integral \eqref{eq:4.2} into two parts as follows:
	{\small \begin{equation}\label{eq:5.18}
			K_{1}^{s}(t,x)=\underbrace{\int_{|y| \leq \eta} G_{s}(y) \partial_{x} u(t, X(t, x)-y) \mathrm{~d} y}_{I_{3}}+\underbrace{\int_{|y|>\eta} G_{s}(y) \partial_{x} u(t, X(t, x)-y) \mathrm{~d} y}_{I_{4}},
	\end{equation}}
	
	\noindent	where $\eta=\eta(t) \in(0,1]$ will be determined later. By $\eqref{eq:3.1}_1$ and \eqref{eq:5.17}, one has
	\begin{equation}\label{eq:5.19}
		|I_{3}| \leq C \|v_{2}\|_{L^{\infty}} \int_{|y| \leq \eta} \frac{1}{|y|^{1-s}} \mathrm{~d} y \leq C\eta^{s}\|v_{2}\|_{L^{\infty}}\leq C C_{1} \eta^{s} q(t)^{-1}.
	\end{equation}
	By $\eqref{eq:3.1}_1$, $\eqref{eq:3.2}_1$,  \eqref{eq:3.3} and \eqref{eq:5.16}, one integrates by parts to find that
	\begin{equation}\label{eq:5.20}
		\begin{aligned}
			|I_{4}| &\leq  |G_{s}(\eta)[u(t, X(t, x)-\eta)-u(t, X(t, x)+\eta)]| \\
			&\quad +|\int_{\eta<|y| \leq 1} G_{s}^{\prime}(y) u(t, X(t, x)-y) \mathrm{~d} y| \\
			&\quad +|\int_{|y|>1} G_{s}^{\prime}(y) u(t, X(t, x)-y) \mathrm{~d} y| \\
			&\leq  C [\eta^{s-1}\|v_{1}\|_{L^{\infty}}+(\eta^{s-1}-1)\|v_{1}\|_{L^{\infty}}+  \|v_{1}\|_{L^{\infty}}] \\
			&\leq  C  \eta^{s-1}\|v_{1}\|_{L^{\infty}} \leq C C_{0}\eta^{s-1}.
		\end{aligned}
	\end{equation}
	By choosing  $\eta=q(t)$,  it follows from \eqref{eq:5.19} and \eqref{eq:5.20} that
	\begin{equation}\label{eq:5.21}
		|K_{1}^{s}(t,x)| \leq C (C_{0}+C_{1}) q(t)^{s-1}\quad \text{for}\ (t,x) \in (0,T_{2}]\times \mathbb{R}.
	\end{equation}

	\underline{\bf{Estimates on \(v_{1}(t,x)\)}.}
	By \eqref{eq:5.16} and \eqref{eq:5.21}, one uses \eqref{eq:4.1} and \eqref{eq:5.11} to deduce that
	\begin{equation}\label{eq:5.22}
		\begin{aligned}
			|v_{1}(t,x)|
			& \leq  \|u_{0}\|_{L^{\infty}}+C(C_{0}+C_{1}) \int_{0}^{t} q(\tau)^{s-1} \mathrm{~d} \tau \\
			& \leq  \frac{1}{2} C_{0}-C(C_{0}+C_{1})(1-\delta)^{-2} m^{-1}(0) \\
			& <C_{0} \quad \text{for}\ (t,x) \in (0,T_{2}]\times \mathbb{R},
		\end{aligned}
	\end{equation}
	where $\eqref{eq:2.1}_2$ has been used in the last inequality.
	
	\underline{\bf{Estimates on \(K_{2}^{s}(t, x)\)}.} Similar to \eqref{eq:5.18}, we also write the integral \eqref{eq:4.4} as follows:
	{\small \begin{equation}\label{K2}
			K_{2}^{s}(t, x)=\underbrace{\int_{|y| \leq \eta} G_{s}(y) \partial_{x}^{2} u(t, X(t, x)-y) \mathrm{~d} y}_{I_{5}}+\underbrace{\int_{|y|>\eta} G_{s}(y) \partial_{x}^{2} u(t, X(t, x)-y) \mathrm{~d} y}_{I_{6}}.
	\end{equation}}
	
	In a similar fashion to \eqref{eq:5.19} and  \eqref{eq:5.20}, one can estimate
	\begin{equation}\label{eq:5.23}
		|I_{5}| \leq C \|\partial_{x}^{2} u\|_{L^{\infty}}  \int_{|y| \leq \eta} \frac{1}{|y|^{1-s}} \mathrm{~d} y \leq C \eta^{s}\|\partial_{x}^{2} u\|_{L^{\infty}}
	\end{equation}
	and
	\begin{equation}\label{eq:5.24}
		\begin{aligned}
			|I_{6}| &\leq  |G_{s}(\eta)[\partial_{x} u(t, X(t, x)-\eta)-\partial_{x} u(t, X(t, x)+\eta)]| \\
			&\quad +\bigg|\int_{\eta<|y| \leq 1} G_{s}^{\prime}(y) \partial_{x} u(t, X(t, x)-y) \mathrm{~d} y\bigg| \\
			&\quad +\bigg|\int_{|y|>1} G_{s}^{\prime}(y) \partial_{x} u(t, X(t, x)-y) \mathrm{~d} y\bigg| \\
			&\leq C \big[\eta^{s-1}\|v_{2}\|_{L^{\infty}}+\big(\eta^{s-1}-1\big)\|v_{2}\|_{L^{\infty}}+\|v_{2}\|_{L^{\infty}}\big] \\
			&\leq  C C_{1} \eta^{s-1} q(t)^{-1}.
		\end{aligned}
	\end{equation}

	In order to get the blow-up rate of \(I_5\), it suffices to estimate  $\|\partial_{x}^{2} u\|_{L^{\infty}}$. Noticing that $\|\partial_{x}^{2} u\|_{L^{\infty}}$ is not contained in \eqref{eq:5.16} or \eqref{eq:5.17}, our idea is to use $\|\partial_{x}^{3} u\|_{L^{2}}$
	to control $\|\partial_{x}^{2} u\|_{L^{\infty}}$. To this end,
	we turn to estimate $\|\partial_{x}^{3} u\|_{L^{2}}$ by energy estimates. Applying $\partial^{3}_{x}$ to \eqref{eq:1.1}, and multiplying it by $\partial^{3}_{x}u$, one obtains
	\begin{equation}\label{eq:5.25}
		\begin{aligned}
			\frac{1}{2} \frac{\mathrm{d}}{\mathrm{d} t} \int_{\mathbb{R}}(\partial_{x}^{3} u)^{2} \mathrm{d} x& = \int_{\mathbb{R}} \partial_{x}^{3} u \int_{\mathbb{R}} G_{s}(x-y) \partial_{y}^{4} u(y)\, \mathrm{d} y \mathrm{d} x \\
			&\quad -\int_{\mathbb{R}}\big[4 \partial_{x} u(\partial_{x}^{3} u)^{2}+3(\partial_{x}^{2} u)^{2} \partial_{x}^{3} u+u \partial_{x}^{4} u \partial_{x}^{3} u\big] \mathrm{~d} x\\
			&=-\frac{7}{2}\int_{\mathbb{R}} \partial_{x} u(\partial_{x}^{3} u)^{2} \mathrm{~d} x,
		\end{aligned}
	\end{equation}
	where one has used the fact that $G_{s}(\cdot)$ is an even	kernel, and the following equalities:
	\begin{equation*}
		\begin{aligned}
			\int_{\mathbb{R}} u \partial_{x}^{4} u
			\partial_{x}^{3} u \mathrm{~d} x  =-\frac{1}{2} \int_{\mathbb{R}} \partial_{x} u(\partial_{x}^{3} u)^{2} \mathrm{~d} x\quad \text{and} \quad
			\int_{\mathbb{R}}(\partial_{x}^{2} u)^{2} \partial_{x}^{3} u \mathrm{~d} x  =0.
		\end{aligned}
	\end{equation*}
	Consequently, it follows from \eqref{eq:5.25} and \eqref{eq:4.5} that
	\begin{equation}\label{eq:5.26}
		\frac{\mathrm{d}}{\mathrm{d} t} \int_{\mathbb{R}}(\partial_{x}^{3} u)^{2} \mathrm{~d} x
		\leq-7 m(0) q(t)^{-1}\|\partial_{x}^{3} u\|_{L^{2}}^{2},
	\end{equation}
	which, together with \eqref{eq:5.12}, yields
	\begin{equation}\label{eq:5.27}
		\begin{aligned}
			\|\partial_{x}^{3} u\|_{L^{2}}  &\leq \|u_{0}^{\prime \prime \prime}\|_{L^{2}}(1-\delta)^{-\frac{7}{2(1-\delta)^{2}}} q(t)^{-\frac{7}{2(1-\delta)^{2}}}\\
			&\leq C\|u_{0}^{\prime \prime \prime}\|_{L^{2}} q(t)^{-\frac{7}{2(1-\delta)^{2}}}\quad \text{for}\ t \in(0, T_{2}].
		\end{aligned}
	\end{equation}
	
	The next key observation is that one can utilize the Gagliardo-Nirenberg interpolation inequality to deduce that
	\begin{equation}\label{eq:5.28}
		\begin{aligned}
			\|\partial_{x}^{2} u\|_{L^{\infty}}
			&\leq C\|\partial_{x} u\|_{L^{\infty}}^{\frac{1}{3}}\|\partial_{x}^{3} u\|_{L^{2}}^{\frac{2}{3}} \\
			&\leq C  C_{1}^{\frac{1}{3}}\|u_{0}^{\prime \prime \prime}\|_{L^{2}}^{\frac{2}{3}} q(t)^{-\frac{1}{3}-\frac{7}{3(1-\delta)^{2}}}
			\quad \text{for}\ t \in(0, T_{2}],
		\end{aligned}
	\end{equation}
	where one has used \eqref{eq:5.17} and \eqref{eq:5.27}.
	
	It follows from \eqref{eq:5.23} and \eqref{eq:5.28} that
	\begin{equation}\label{eq:5.29}
		|I_{5}| \leq C C_{1}^{\frac{1}{3}}\|u_{0}^{\prime \prime \prime}\|_{L^{2}}^{\frac{2}{3}} \eta^{s} q(t)^{-\frac{1}{3}-\frac{7}{3(1-\delta)^{2}}}.
	\end{equation}
	
	Inserting \eqref{eq:5.24} and \eqref{eq:5.29} to \eqref{K2} and choosing  $\eta=q(t)^{-\frac{2}{3}+\frac{7}{3(1-\delta)^{2}}}$  yield	
	\begin{equation}\label{eq:5.30}
		\begin{aligned}
			|K_{2}(t,x)| & \leq C\Big( C_{1} + C_{1}^{\frac{1}{3}}\|u_{0}^{\prime \prime \prime}\|_{L^{2}}^{\frac{2}{3}}\Big) q(t)^{-\frac{1+2s}{3}-\frac{7(1-s)}{3(1-\delta)^{2}}} \\
			& \leq C\Big(C_{1} + C_{1}^{\frac{1}{3}}\|u_{0}^{\prime \prime \prime}\|_{L^{2}}^{\frac{2}{3}}\Big) q(t)^{-2}\quad \text{for}\ (t,x) \in (0,T_{2}]\times \mathbb{R},
		\end{aligned}
	\end{equation}
	where one has used
	\begin{equation*}
		-\frac{1+2s}{3}-\frac{7(1-s)}{3(1-\delta)^{2}} \geq -2,
	\end{equation*}
	which follows from the facts \(\uwave{s\in (2/5, 1)}\) and \eqref{eq:5.10}.
	
	\underline{\bf{Estimates on \(v_{2}(t,x)\)}.}	By \eqref{eq:4.6},  one notices that
	\begin{equation*}
		\frac{\mathrm{d} v_{2}}{\mathrm{~d} t}\leq|K_{2}(t, x)|,
	\end{equation*}
	which, together with  \eqref{eq:5.30} and \eqref{eq:5.11}, implies	
	\begin{equation}\label{eq:5.31}
		\begin{aligned}
			v_{2}( t,x)
			&\leq \|u_{0}^{\prime}\|_{L^{\infty}} +  C\Big(C_{1} + C_{1}^{\frac{1}{3}}\|u_{0}^{\prime \prime \prime}\|_{L^{2}}^{\frac{2}{3}}\Big)\int_{0}^{t} q(\tau)^{-2} \mathrm{~d} \tau \\
			& \leq \frac{1}{2} C_{1} q(t)^{-1}-C\Big(C_{1} + C_{1}^{\frac{1}{3}}\|u_{0}^{\prime \prime \prime}\|_{L^{2}}^{\frac{2}{3}}\Big)(1-\delta)^{-3} m^{-1}(0) q(t)^{-1} \\
			&< C_{1} q(t)^{-1}\quad \text{for}\ (t,x) \in (0,T_{2}]\times \mathbb{R},
		\end{aligned}
	\end{equation}
	where one has used $\eqref{eq:2.1}_3$ in the last inequality.
	
	On the other hand, one may assume that
	$\|u_{0}^{\prime}\|_{L^{\infty}} = - m(0)$ without loss of generality, which together with \eqref{eq:4.5} and \eqref{eq:2.2} implies
	\begin{equation}\label{eq:5.32}
		\begin{aligned}
			v_{2}(t, x) \geq m(0) q(t)^{-1} \geq -\frac{C_{1}}{2}  q(t)^{-1}\quad \text{for}\ (t,x) \in (0,T_{2}]\times \mathbb{R}.
		\end{aligned}
	\end{equation}
	
	\underline{\bf{The\ contradiction\ argument \eqref{eq:5.15}}}.
	Collecting \eqref{eq:5.22}, \eqref{eq:5.31} and \eqref{eq:5.32}  yields a contradiction to \eqref{eq:5.15}.
	Thus, \eqref{eq:5.13} and \eqref{eq:5.14} follow.
	
	\underline{\bf{The\ contradiction\ argument \eqref{eq:5.5}}}. It follows from \eqref{eq:5.30} and $\eqref{eq:2.1}_1$ that	
	\begin{equation*}
		\begin{aligned}
			|K_{2}(t,x)|
			&\leq C\Big(C_{1} + C_{1}^{\frac{1}{3}}\|u_{0}^{\prime \prime \prime}\|_{L^{2}}^{\frac{2}{3}}\Big) m^{-2}(0) m^{2}(t)\\
			&<\delta^{2} m^{2}(t)\quad
			\text{for}\ (t,x) \in (0,T_{1}]\times \mathbb{R},
		\end{aligned}
	\end{equation*}
	which contradicts \eqref{eq:5.5}. Hence we have shown \eqref{eq:4.6}.\\
	
	Now we are ready to finish the proof of Theorem \ref{th:2.1}.
	\begin{proof}[Proof of Theorem \ref{th:2.1}]	
		For $t \in [0, T )$ and $x \in \Sigma_{\delta}(t)$, it follows from Lemma \ref{lem:3} (by setting $t_{1} = 0$ and $t_{2} = t$) that
		\begin{equation*}
			m(0) \leq v_{2}(0, x) \leq(1-\delta) m(0).
		\end{equation*}
		This, together with \eqref{eq:5.7} and \eqref{eq:5.8}, yields
		\begin{equation*}
			r(t) \leq m(0)\big[v_{2}^{-1}(0,x)+(1-\delta) t\big] \leq (1-\delta)^{-1}+m(0)(1-\delta) t
		\end{equation*}
		and
		\begin{equation*}
			r(t) \geq m(0)\big[v_{2}^{-1}(0,x)+(1+\delta) t\big] \geq 1+m(0)(1+\delta) t.
		\end{equation*}
		Hence
		\begin{equation*}
			(1-\delta)+m(0)(1-\delta^{2}) t \leq q(t) \leq (1-\delta)^{-1}+m(0)(1-\delta) t,
		\end{equation*}
		which is
		\begin{equation}\label{eq:5.33}
			(1-\delta)+\inf _{x \in \mathbb{R}} u_{0}^{\prime}(x)(1-\delta^{2}) t \leq q(t) \leq (1-\delta)^{-1}+\inf _{x \in \mathbb{R}} u_{0}^{\prime}(x)(1-\delta) t.
		\end{equation}
		
		Obviously,  $q(t)$  goes to zero  by sending  $t$ to  $(1+\delta)^{-1}\big[-\inf _{x \in \mathbb{R}} u_{0}^{\prime}(x)\big]^{-1}$   and  $(1-\delta)^{-2}\big[-\inf _{x \in \mathbb{R}} u_{0}^{\prime}(x)\big]^{-1}$  on the LHS and RHS of \eqref{eq:5.33}, respectively. On the other side,  \eqref{eq:5.13} implies that  $v_{1}(t, x)$  is bounded for all  $t \in[0, T^{\prime}]$  with any  $T^{\prime}<T$. Hence, \(u\) exhibits wave breaking at $ T $ satisfying \eqref{eq:2.3}.	
	\end{proof}
	
	%\end{proof}	
	
	\section{Proof of Theorem \ref{th:2.2}}
	Note that \(p=1\).
	We only handle the case of \(\uwave{s=1}\) since the other case \(s\in (1, \infty)\) can be dealt with analogously (in fact it is much easier). Comparing the proof in the case of $s \in (2/5, 1)$, our aim in this section is to lower the regularity of \(u\) from \(H^3\) to \(H^2\).

	First we check that \eqref{eq:4.6} holds at $t=0$. $I_{2}$ can be estimated exactly as \eqref{eq:5.3}.
	It remains to consider $I_{1}$.
	By Hölder's inequality, one uses $\eqref{eq:3.1}_2$ and \eqref{eq:3.9} to estimate	
	\begin{equation}\label{eq:6.1}
		\begin{aligned}
			|I_1| &\leq C 	\|u_{0}^{\prime \prime}\|_{L^{2}} \bigg(\int_{|y| < 1} \Big(\log\Big(\frac{1}{|y|}\Big)+1\Big)^{2} \mathrm{~d} y\bigg)^{\frac{1}{2}}\\
			&\leq C \|u_{0}^{\prime \prime}\|_{L^{2}}\bigg(\int_{|y|<1}\bigg(\frac{1}{|y|^{\frac{2}{5}}} +1\bigg)^{2}\mathrm{~d}y\bigg)^{\frac{1}{2}}
			\leq C\|u_{0}\|_{H^{2}}.
		\end{aligned}
	\end{equation}
	It follows from $\eqref{eq:2.4}_1$, \eqref{eq:6.1}  and \eqref{eq:5.3}  that
	\begin{equation*}
		|K_{2}^{s}(0,x)|\leq C(\|u_{0}\|_{H^{2}} + C_{1})  < \delta^{2} m^{2}(0)\quad \text{for}\ x \in \mathbb{R}.
	\end{equation*}

	Now, we shall prove  \eqref{eq:4.6} for $t \ne 0$ by the assumptions $\eqref{eq:2.4}_1$-$\eqref{eq:2.4}_3$. The argument of contradiction, lemmas and claims are exactly the same as in the proof of Theorem \ref{th:2.1}.

	\underline{\bf{Estimates on \(K_{1}^{s}(t, x)\)}.} 	
	One may use Hölder's inequality to estimate	
	\begin{equation}\label{eq:6.3}
		|I_{4}| \leq \|\partial_{x} u\|_{L^{2}}\bigg(\int_{|y|>\eta} G_{s}^{2}(y)  \mathrm{~d} y \bigg)^{\frac{1}{2}} \leq C\|\partial_{x} u\|_{L^{2}},
	\end{equation}
	where one has used \eqref{eq:3.4},	and
	\begin{equation}\label{eq:6.2}
		\begin{aligned}
			|I_{3}|
			&\leq C\|\partial_{x} u\|_{L^{2}} \bigg(\int_{|y| \leq \eta} \Big(\log\Big(\frac{1}{|y|}\Big)+1\Big)^{2} \mathrm{~d} y\bigg)^{\frac{1}{2}}\\
			&\leq C 	\|\partial_{x} u\|_{L^{2}}\bigg(\int_{|y| \leq \eta}\bigg(\frac{1}{|y|^{\frac{2}{5}}}+1\bigg)^{2} \mathrm{~d} y\bigg)^{\frac{1}{2}}
			\leq C  \|\partial_{x} u\|_{L^{2}}\eta^{\frac{1}{10}}
		\end{aligned}
	\end{equation}
	where one has used $\eqref{eq:3.1}_{2}$ and \eqref{eq:3.9}.

	In order to get the blow-up rate of \(I_{3}\) and \(I_{4}\), it suffices to estimate  $\|\partial_{x}u\|_{L^2}$, which will be achieved by energy estimates. Applying $\partial_{x}$ to \eqref{eq:1.1}, and multiplying it by $\partial_{x}u$, one finds that
	\begin{equation*}
		\begin{aligned}
			&\frac{\mathrm{d}}{\mathrm{d} t} \int_{\mathbb{R}}(\partial_{x} u)^{2} \mathrm{~d} x =-\int_{\mathbb{R}} \partial_{x} u(\partial_{x} u)^{2} \mathrm{~d} x\\
			& \leq -m(0) q(t) ^{-1}\int_{\mathbb{R}}(\partial_{x} u)^{2} \mathrm{~d} x\leq -m(0) q(t)^{-1}\|\partial_{x} u\|^{2}_{L^2},
		\end{aligned}
	\end{equation*}
	which, together with \eqref{eq:5.12}, gives
	\begin{equation}\label{eq:6.4}
		\begin{aligned}
			\|\partial_{x} u(t)\|_{L^{2}}
			&\leq\|u_{0}^{\prime}\|_{L^{2}}(1-\delta)^{-\frac{1}{2(1-\delta)^{2}}} q(t)^{-\frac{1}{2(1-\delta)^{2}}}\\
			&\leq C\|u_{0}^{\prime}\|_{L^{2}} q(t)^{-\frac{1}{2(1-\delta)^{2}}}\quad \text{for}\ t \in(0, T_{2}].
		\end{aligned}
	\end{equation}

	It follows from \eqref{eq:6.2}, \eqref{eq:6.3} and \eqref{eq:6.4} that
	\begin{equation}\label{eq:6.5}
		|I_{3}| \leq C \|u_{0}^{\prime}\|_{L^{2}}\eta^{\frac{1}{10}} q(t)^{-\frac{1}{2(1-\delta)^{2}}}
	\end{equation}
	and
	\begin{equation}\label{eq:6.6}
		|I_{4}| \leq C\|u_{0}^{\prime}\|_{L^{2}} q(t)^{-\frac{1}{2(1-\delta)^{2}}}.
	\end{equation}
	
	Choosing  $\eta=1$, we conclude from \eqref{eq:6.5} and \eqref{eq:6.6}  that	
	\begin{equation}\label{eq:6.7}
		\begin{aligned}
			|K_{1}^{s}(t,x)|& \leq C\|u_{0}^{\prime}\|_{L^{2}} q(t)^{-\frac{1}{2(1-\delta)^{2}}}\\
			&\leq C\|u_{0}^{\prime}\|_{L^{2}} q(t)^{-\frac{2}{3}}\quad \text{for}\ (t,x) \in (0,T_{2}]\times \mathbb{R},
		\end{aligned}
	\end{equation}
	where one has used
	\begin{equation*}
		-\frac{1}{2(1-\delta)^{2}} \geq -\frac{2}{3}.
	\end{equation*}

	\underline{\bf{Estimates on \(v_{1}(t,x)\)}.}
	By \eqref{eq:2.2}, \eqref{eq:6.7} and \eqref{eq:5.11}, one may estimate
	\begin{equation}\label{eq:6.8}
		\begin{aligned}
			|v_{1}(t,x)| & \leq\|u_{0}\|_{L^{\infty}}+\int_{0}^{t}|K_{1}^{s}(\tau, x)| \mathrm{~d} \tau \\
			& \leq \frac{1}{2} C_{0}+C\|u_{0}^{\prime}\|_{L^{2}} \int_{0}^{t} q^{-\frac{2}{3}}(\tau) \mathrm{~d} \tau \\
			& \leq  \frac{1}{2} C_{0}-C\|u_{0}^{\prime}\|_{L^{2}}(1-\delta)^{-2} m^{-1}(0)  \\
			& <C_{0}\quad \text{for}\ (t,x) \in (0,T_{2}]\times \mathbb{R},
		\end{aligned}
	\end{equation}
	where  $\eqref{eq:2.4}_2$ has been used in the last inequality.
	
	\underline{\bf{Estimates on \(K_{2}^{s}(t, x)\)}.} For \(I_{6}\), similar to \eqref{eq:5.24},
	one uses \eqref{eq:3.1}$_{2}$, \eqref{eq:3.2}$_{1}$ and \eqref{eq:3.3} to get
	\begin{equation}\label{eq:6.9}
		\begin{aligned}
			|I_{6}| &\leq  |G_{s}(\eta)[\partial_{x} u(t, X(t, x)-\eta)-\partial_{x} u(t, X(t, x)+\eta)]| \\
			&\quad +\bigg|\int_{\eta<|y| \leq 1} G_{s}^{\prime}(y) \partial_{x} u(t, X(t, x)-y) \mathrm{~d} y\bigg| \\
			&\quad +\bigg|\int_{|y|>1} G_{s}^{\prime}(y) \partial_{x} u(t, X(t, x)-y) \mathrm{~d} y\bigg| \\
			&\leq C\bigg[\bigg(\log \bigg(\frac{1}{\eta}\bigg)+1\bigg)\|v_{2}\|_{L^{\infty}}+|\log \eta| \|v_{2}\|_{L^{\infty}}+\|v_{2}\|_{L^{\infty}}\bigg]\\
			&\leq  C  C_{1}\eta^{-\frac{1}{6}}  q(t)^{-1},
		\end{aligned}
	\end{equation}
	where one has used
	\begin{equation*}
		\log\bigg(\frac{1}{\eta}\bigg)\leq \frac{C}{\eta^{\frac{1}{6}}}\quad \text{for} \ 0<\eta<1.
	\end{equation*}
	For \(I_{5}\), one instead uses Hölder's inequality to estimate
	\begin{equation*}
		|I_{5}|\leq C  \|\partial_{x}^{2} u\|_{L^{2}}\eta^{\frac{1}{10}}.	
	\end{equation*}	
	
	It remains to control $\|\partial_{x}^{2} u\|_{L^{2}}$.  Indeed, applying $\partial^{2}_{x}$ to \eqref{eq:1.1} and multiplying it by $\partial^{2}_{x}u$, one deduces that
	\begin{equation*}
		\frac{\mathrm{d}}{\mathrm{d} t} \int_{\mathbb{R}}(\partial^{2}_{x} u)^{2} \mathrm{~d} x=- 5\int_{\mathbb{R}}\partial_{x} u(\partial^{2}_{x} u)^{2} \mathrm{~d} x \leq- 5m(0) q^{-1}(t)\|\partial^{2}_{x} u\|_{L^{2}}^{2},
	\end{equation*}
	which, along with \eqref{eq:5.12}, yields 	
	\begin{equation*}
		\begin{aligned}
			\|\partial_{x}^{2} u\|_{L^{2}}  &\leq \|u_{0}^{\prime\prime }\|_{L^{2}}(1-\delta)^{-\frac{5}{2(1-\delta)^{2}}} q(t)^{-\frac{5}{2(1-\delta)^{2}}}\\
			&\leq C \|u_{0}^{\prime\prime}\|_{L^{2}} q(t)^{-\frac{5}{2(1-\delta)^{2}}}\quad \text{for}\ t \in(0, T_{2}].
		\end{aligned}
	\end{equation*}
	Hence, one obtains
	\begin{equation}\label{eq:6.10}
		|I_{5}|\leq C\| u_{0}^{\prime \prime}\|_{L^{2}}\eta^{\frac{1}{10}} q(t)^{-\frac{5}{2(1-\delta)^2}}.
	\end{equation}

	Taking $\eta=q(t)^{-\frac{15}{4}+\frac{75}{8(1-\delta)^{2}}}$, it follows from \eqref{K2}, \eqref{eq:6.9} and \eqref{eq:6.10} that
	\begin{equation}\label{eq:6.11}
		\begin{aligned}
			|K^{s}_{2}(t,x)|
			& \leq C\big( C_{1}+ \|u_{0}^{\prime \prime}\|_{L^{2}}\big) q(t)^{-\frac{3}{8}-\frac{25}{16(1-\delta)^{2}}} \\
			& \leq C\big(C_{1} + \|u_{0}^{\prime \prime}\|_{L^{2}}\big)q(t)^{-2}\quad \text{for}\ (t,x) \in (0,T_{2}]\times \mathbb{R},
		\end{aligned}
	\end{equation}	
	where one has used
	\begin{equation*}
		-\frac{3}{8}-\frac{25}{16(1-\delta)^{2}} \geq -2.
	\end{equation*}

	\underline{\bf{Estimates on \(v_{2}(t,x)\)}.}
	It follows from \eqref{eq:6.11} and \eqref{eq:5.11} that
	\begin{equation}\label{eq:6.12}
		\begin{aligned}
			v_{2}(t, x) & \leq \|u_{0}^{\prime}\|_{L^{\infty}}+C\big(C_{1} + \|u_{0}^{\prime \prime}\|_{L^{2}}\big)  \int_{0}^{t} q(\tau)^{-2} \mathrm{~d} \tau \\
			& \leq \frac{1}{2} C_{1} q^{-1}(t)-C(1-\delta)^{-3} m^{-1}(0)\big(C_{1} +  \|u_{0}^{\prime \prime}\|_{L^{2}}\big) q(t)^{-1} \\
			& <C_{1} q(t)^{-1}\quad \text{for}\ (t,x) \in (0,T_{2}]\times \mathbb{R},
		\end{aligned}
	\end{equation}	
	where $\eqref{eq:2.4}_3$ has been used. Moreover, similar to \eqref{eq:5.32}, it holds that	
	\begin{equation}\label{eq:6.13}
		v_{2}(t, x) \geq -\frac{C_{1}}{2} q(t)^{-1}\quad \text{for}\ (t,x) \in (0,T_{2}]\times \mathbb{R}.
	\end{equation}

	\underline{\bf{The\ contradiction\ argument \eqref{eq:5.15}}}. By \eqref{eq:6.8}, \eqref{eq:6.12} and \eqref{eq:6.13},
	we get a contradiction to \eqref{eq:5.15}.
	Hence, \eqref{eq:5.13} and \eqref{eq:5.14} follow.
	
	\underline{\bf{The\ contradiction\ argument \eqref{eq:5.5}}}. One uses \eqref{eq:6.11} and $\eqref{eq:2.4}_1$ to find that	
	\begin{equation*}
		\begin{aligned}
			|K_{2}(t,x)| &\leq C \big(C_{1} + \|u_{0}^{\prime \prime}\|_{L^{2}}\big)  m^{-2}(0) m^{2}(t)\\
			&<\delta^{2} m^{2}(t)\quad \text{for}\ (t,x) \in (0,T_{1}]\times \mathbb{R},
		\end{aligned}
	\end{equation*}	
	which contradicts \eqref{eq:5.5}. This completes the proof of \eqref{eq:4.6}.
	
	The remaining proof is similar to that of  $2/5<s<1$, so we omit it.

	%	Therefore, \eqref{eq:4.6} holds for all  $t \in[0, T)$  and all $ x \in \mathbb{R}$. We remark that \eqref{eq:5.13} and \eqref{eq:5.14} hold for all  $t \in[0, T^{\prime}]$  with any  $T^{\prime}<T$.

	\section{Proof of Theorem \ref{th:2.3}}
	
	Note that \(p>1\).  We will point only the main differences in the proof between Theorem \ref{th:2.1} and \ref{th:2.3}.

	First, one can check \eqref{eq:5.4} by following exactly the same way as \eqref{eq:5.1}-\eqref{eq:5.3} and using $\eqref{eq:2.5}_1$. Then, it remains to show \eqref{eq:4.6} by an argument of contradiction.

	Since the nonlinear term  $v_{1}^{p-1}v_{2}^2$  does not have a fixed sign generally,  in order to use  $v_{1}^{p-1}v_{2}^2$  to control  $K_{2}(t,  x)$  in \eqref{eq:4.3},  the key idea is to make the following a priori assumption:
	\begin{equation}\label{eq:7.1}
		\boxed{	A \leq v_{1}(t;x) \leq B  \quad \text { for } (t,x) \in [0,  T_{1}]\times [\bar{x}_{1},  \bar{x}_{2}]},
	\end{equation}
	in which  $A$  and  $B$  are given in Theorem
	\ref{th:2.3} satisfying \eqref{eq:2.6}.
	
	For \((t,x) \in [0,  T_{1}]\times [\bar{x}_{1},  \bar{x}_{2}]\), define
	\begin{equation*}
		\Sigma_{\delta}(t)=\{x\in[\bar{x}_{1}, \bar{x}_{2}]: v_{2}(t; x) \leq (A^{p-1}B^{1-p} - \delta)m(t)\}
	\end{equation*}
	and
	\begin{equation*}
		v_{2}(t, x)\bigg(=\frac{v_{2}(0)}{1+v_{2}(0) \int_{0}^{t}\big[pv_{1}^{p-1}(\tau)+\big(v_{2}^{-2} K_{1}\big)(\tau)\big]\, \, \diff \tau}\bigg)=\colon m(0) r^{-1}(t, x).
	\end{equation*}

	Then, one can show the following lemmas.
	
	\begin{lemma}~\label{lem:6}
		For fixed \(\delta\), the set $\Sigma_{\delta}(t)$ is decreasing in \(t\), namely
		$\Sigma_{\delta}(t_{2}) \subset \Sigma_{\delta}(t_{1})$  whenever  $0 \leq t_{1} \leq t_{2} \leq T_{1} $.
	\end{lemma}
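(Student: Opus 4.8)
The plan is to follow the template of Lemma~\ref{lem:3} and of its sources \cite{MR1261868,MR4409228}, the single new ingredient being that the nonlinear coefficient $p\,v_1^{p-1}$ in \eqref{eq:4.3} is no longer identically $1$ and must be pinned down through the a priori bound \eqref{eq:7.1}. Fix $0\le t_1\le t_2\le T_1$ and write $\alpha:=A^{p-1}B^{1-p}-\delta$; the conditions \eqref{eq:2.7} force $\alpha\in(0,1)$ (the lower bound from $A^{2p-2}>8\delta B^{2p-2}$, the upper bound from $B>A$). Since $m(t)<0$ on $[0,T_1]$ — which one checks exactly as in the $p=1$ case, by evaluating \eqref{eq:4.3} at a minimizing label and using \eqref{eq:7.1} together with \eqref{eq:4.6} — the ratio $\rho(t,x):=v_2(t,x)/m(t)$ is well defined, and for $x\in[\bar x_1,\bar x_2]$ one has $x\in\Sigma_\delta(t)\iff\rho(t,x)\ge\alpha$. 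Thus $\Sigma_\delta(t_2)\subset\Sigma_\delta(t_1)$ is equivalent to: for each fixed $x\in[\bar x_1,\bar x_2]$, the slice $\{t\in[0,T_1]:\rho(t,x)\ge\alpha\}$ is an initial sub-interval, i.e.\ $\rho(\cdot,x)$ may not cross the level $\alpha$ upward as $t$ increases.

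To prove this I would argue by contradiction: if such an upward crossing occurred, pick the first time $t_\ast$ following a sub-level excursion at which $\rho(t_\ast,x)=\alpha$, so that the lower left Dini derivative of $\rho(\cdot,x)$ at $t_\ast$ is nonnegative, and then contradict this by showing $\dot\rho<0$ wherever $\rho=\alpha$. Here $v_2(\cdot,x)$ is $C^1$ by \eqref{eq:4.3}, while $m(\cdot)$ is locally Lipschitz with $\frac{\mathrm{d}m}{\mathrm{d}t}(t)=-p\,v_1^{p-1}(t,\xi(t))\,m^2(t)-K_2^s(t,\xi(t))$ for a.e.\ $t$, where $\xi(t)$ is any label realising the infimum in \eqref{eq:4.5} (the classical envelope fact for infima of $C^1$ families). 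A short computation from \eqref{eq:4.3} then yields, at any $(t,x)$ with $\rho(t,x)=\alpha$ (so $v_2=\alpha m$, $v_2^2=\alpha^2m^2$),
\begin{equation*}
\frac{\mathrm{d}\rho}{\mathrm{d}t}(t,x)=p\alpha\, m(t)\big[v_1^{p-1}(t,\xi(t))-\alpha\, v_1^{p-1}(t,x)\big]+\frac{1}{m(t)}\big[\alpha\,K_2^s(t,\xi(t))-K_2^s(t,x)\big].
\end{equation*}
The key algebraic cancellation is $A^{p-1}-\alpha B^{p-1}=\delta B^{p-1}$, immediate from the definition of $\alpha$: combined with \eqref{eq:7.1} applied at $x$ and at $\xi(t)$ — that is, $v_1^{p-1}(t,\xi(t))\ge A^{p-1}$ and $v_1^{p-1}(t,x)\le B^{p-1}$ — it shows the bracket in the first term is $\ge\delta B^{p-1}>0$, so (since $m<0$, $\alpha>0$) that term is $\le-p\alpha\delta B^{p-1}\,|m(t)|$; the second term is bounded by $(1+\alpha)\delta^2|m(t)|$ by \eqref{eq:4.6}. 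Hence $\dot\rho\le\delta|m(t)|\,[-p\alpha B^{p-1}+(1+\alpha)\delta]$, and since $p\alpha B^{p-1}=p(A^{p-1}-\delta B^{p-1})$ is bounded below by a positive multiple of $\sqrt{\delta}\,B^{p-1}$ by $A^{2p-2}>8\delta B^{2p-2}$ while $(1+\alpha)\delta<2\delta$, this bracket is strictly negative for $\delta$ small, the desired contradiction. The monotonicity of $\Sigma_\delta$ follows.

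The hard part — and the only essential departure from the $p=1$ proof, where $v_1^{p-1}\equiv1$ makes the coefficient issue evaporate — is knowing that the minimizing label $\xi(t)$ lies in $[\bar x_1,\bar x_2]$, so that \eqref{eq:7.1} (hence $A\le v_1(t,\xi(t))\le B$) may be invoked there. This is precisely where the local amplitude conditions \eqref{eq:2.6}, and the confinement of the negativity set $\{x:v_2(t,x)<0\}$ to $[\bar x_1,\bar x_2]$, enter; I would either establish that confinement before this lemma or carry it along as another running a priori assumption to be closed by contradiction, in the spirit of \eqref{eq:7.1}. The remaining point, the a.e.\ differentiation formula for $m(t)$, is standard; alternatively one can bypass it altogether by mimicking the reciprocal set-up of Lemma~\ref{lem:4}, working with the quantity $r(t,x)$ and the explicit integral representation of $v_2$, which is the route of \cite{MR1261868,MR4409228}.
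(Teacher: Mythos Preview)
The paper gives no proof of Lemma~\ref{lem:6} (nor of its $p=1$ analogue Lemma~\ref{lem:3}), deferring instead to \cite{MR1261868,MR4409228}; your argument is exactly the expected adaptation of that route---track the ratio $v_2/m$, show it cannot cross the level $\alpha=A^{p-1}B^{1-p}-\delta$ upward, using the a~priori box \eqref{eq:7.1} to control the new coefficient $v_1^{p-1}$. The derivative computation and the key identity $A^{p-1}-\alpha B^{p-1}=\delta B^{p-1}$ are correct, and your sign analysis of the resulting bracket is sound for $\delta$ small, in line with the paper's standing smallness assumption.

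Two remarks. First, you rightly flag the genuine new difficulty: one needs the minimising label $\xi(t)$ to remain in $[\bar x_1,\bar x_2]$ so that \eqref{eq:7.1} applies there. The paper does not address this explicitly here either; in \cite{MR4409228} this is handled by carrying the confinement of the negativity set as part of the bootstrap (your second suggestion), and that is the cleanest way to close the argument. Second, the a.e.\ differentiability of $m$ is a mild nuisance at the crossing time; your suggested alternative---working with the reciprocal quantity $r(t,x)$ and the explicit integral formula for $v_2$, as in \cite{MR1261868,MR4409228}---bypasses it entirely and is in fact the route those references take.
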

	
	\begin{lemma}\label{lem:7}
		It holds that
		\begin{equation*}
			(pB^{p-1}+\delta) m(0) \leq\frac{\diff}{\diff t}r(t,x) \leq (pA^{p-1}-\delta) m(0)<0 \quad \mathrm{for}\ x \in\Sigma_{\delta}(T_1),
		\end{equation*}
		\begin{equation*}
			q(t) \leq r(t,x) \leq \frac{1}{A^{p-1}B^{1-p}-\delta} q(t)  \quad \mathrm{for}\ x \in\Sigma_{\delta}(T_1)
		\end{equation*}
		and
		\begin{equation*}
			0<q(t) \leq 1.
		\end{equation*}
	\end{lemma}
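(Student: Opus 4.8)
The plan is to run the same bookkeeping as in Lemma~\ref{lem:4}, now using the a priori bound \eqref{eq:7.1} on $v_{1}$ to tame the genuinely new nonlinear term $pv_{1}^{p-1}v_{2}^{2}$ in \eqref{eq:4.3}. Abbreviate $\kappa:=A^{p-1}B^{1-p}-\delta$. Since $A<B$ (by \eqref{eq:2.7}) one has $A^{p-1}B^{1-p}<1$, and $A^{2p-2}>8\delta B^{2p-2}$ in \eqref{eq:2.7} gives $A^{p-1}B^{1-p}>2\sqrt{2\delta}$, whence for $\delta$ small $\kappa\in(0,1)$ and $\kappa^{2}\geq 2\delta$; similarly $pA^{p-1}>\delta$ for $\delta$ small. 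By Lemma~\ref{lem:6}, any $x\in\Sigma_{\delta}(T_{1})$ lies in $\Sigma_{\delta}(t)$ for all $t\in[0,T_{1}]$, so $v_{2}(t,x)\leq\kappa\, m(t)<0$ there; together with the trivial bound $v_{2}(t,x)\geq m(t)$ this pins $\kappa\lvert m(t)\rvert\leq\lvert v_{2}(t,x)\rvert\leq\lvert m(t)\rvert$ on $\Sigma_{\delta}(T_{1})$.

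I would first prove $0<q(t)\leq 1$. Positivity is immediate since $q(t)=m(0)/m(t)$ is a ratio of two negative quantities, so it suffices to show $m(t)\leq m(0)$, i.e.\ that $m$ is non-increasing. Now $m(t)=\inf_{x}v_{2}(t,x)$ is locally Lipschitz, and at a.e.\ $t$ there is a minimizing Lagrangian label $\xi=\xi(t)$ with $v_{2}(t,\xi)=m(t)$ and $m'(t)=\partial_{t}v_{2}(t,\xi)$; since $v_{2}(t,\xi)=m(t)<0$, the label $\xi$ lies in the region where $v_{2}$ is negative, which—exactly as in the $p>1$ counterpart of \eqref{eq:5.13}--\eqref{eq:5.14} that establishes \eqref{eq:7.1}—stays inside $[\bar{x}_{1},\bar{x}_{2}]$, so $A\leq v_{1}(t,\xi)\leq B$. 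Then by \eqref{eq:4.3} and $\lvert K_{2}^{s}(t,\xi)\rvert\leq\delta^{2}m^{2}(t)$ (the standing hypothesis of the contradiction argument for Theorem~\ref{th:2.3}, cf.\ \eqref{eq:4.6}),
\[
m'(t)=-p\,v_{1}^{p-1}(t,\xi)\,m^{2}(t)-K_{2}^{s}(t,\xi)\leq-\big(pA^{p-1}-\delta^{2}\big)m^{2}(t)\leq 0 ,
\]
so $m$ is non-increasing, $m(t)\leq m(0)<0$, and $q(t)\in(0,1]$.

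For the derivative bound I would differentiate $r=m(0)v_{2}^{-1}$. From \eqref{eq:4.3} one gets $\tfrac{\diff}{\diff t}v_{2}^{-1}=p v_{1}^{p-1}+v_{2}^{-2}K_{2}^{s}$, hence $\tfrac{\diff}{\diff t}r(t,x)=m(0)\big(pv_{1}^{p-1}(t,x)+v_{2}^{-2}(t,x)K_{2}^{s}(t,x)\big)$. For $x\in\Sigma_{\delta}(T_{1})\subset[\bar{x}_{1},\bar{x}_{2}]$, \eqref{eq:7.1} gives $pA^{p-1}\leq pv_{1}^{p-1}(t,x)\leq pB^{p-1}$, while $\lvert v_{2}(t,x)\rvert\geq\kappa\lvert m(t)\rvert$ together with $\lvert K_{2}^{s}(t,x)\rvert\leq\delta^{2}m^{2}(t)$ yields $\lvert v_{2}^{-2}K_{2}^{s}\rvert\leq\kappa^{-2}\delta^{2}\leq\delta$. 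Thus $pA^{p-1}-\delta\leq pv_{1}^{p-1}+v_{2}^{-2}K_{2}^{s}\leq pB^{p-1}+\delta$, and multiplying by $m(0)<0$ reverses the inequalities to give $(pB^{p-1}+\delta)m(0)\leq\tfrac{\diff}{\diff t}r(t,x)\leq(pA^{p-1}-\delta)m(0)$, the right-hand side being $<0$ since $pA^{p-1}>\delta$. Finally, the sandwich $m(t)\leq v_{2}(t,x)\leq\kappa m(t)<0$ on $\Sigma_{\delta}(T_{1})$, rewritten via $v_{2}=m(0)r^{-1}$ and $m(t)=m(0)q(t)^{-1}$ and divided by $m(0)<0$, becomes $q(t)^{-1}\geq r(t,x)^{-1}\geq\kappa q(t)^{-1}>0$; inverting gives $q(t)\leq r(t,x)\leq\kappa^{-1}q(t)=(A^{p-1}B^{1-p}-\delta)^{-1}q(t)$, which is the second display.

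The main obstacle is precisely the control of $K_{2}^{s}$ against the nonlinearity at the minimizing label (and the companion bound on $v_{2}^{-2}K_{2}^{s}$): this is where one must know that $v_{1}$ is pinned in $[A,B]$ at the relevant points, i.e.\ that the steep (negative-$v_{2}$) part of the flow never leaves $[\bar{x}_{1},\bar{x}_{2}]$ and that \eqref{eq:7.1} holds there. Those facts rest on the local amplitude conditions \eqref{eq:2.6}--\eqref{eq:2.7} and have to be established in tandem with this lemma, in the same spirit as \eqref{eq:5.13}--\eqref{eq:5.14} are handled when $p=1$. Once that structural input and the smallness $\kappa^{2}\gtrsim\delta$ are available, the three assertions follow by the elementary manipulations above.
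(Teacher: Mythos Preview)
The paper omits the proof of Lemma~\ref{lem:7} entirely (it simply states ``one can show the following lemmas'' and points to \cite{MR1261868,MR4409228}), so there is no in-paper argument to compare against. Your route---use the a~priori amplitude window \eqref{eq:7.1} to pin $pv_1^{p-1}\in[pA^{p-1},pB^{p-1}]$, combine with $|K_2^s|\le\delta^2m^2$ and $|v_2|\ge\kappa|m|$ on $\Sigma_\delta$ to get $|v_2^{-2}K_2^s|\le\delta$, then read off the bounds on $\tfrac{\diff}{\diff t}r$---is exactly the expected one, and the derivation of the first two displays is correct. The use of $A^{2p-2}>8\delta B^{2p-2}$ to secure $\kappa^2\ge\delta$ is the right reading of the structural constant in \eqref{eq:2.7}.

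The one place that is not yet clean is your argument for $q(t)\le1$. You assert that ``the label $\xi$ lies in the region where $v_2$ is negative, which \ldots\ stays inside $[\bar{x}_1,\bar{x}_2]$.'' That last clause is stronger than what is known and, as written, not justified: for $x\notin[\bar{x}_1,\bar{x}_2]$ one has $v_2(0,x)\ge0$, but \eqref{eq:4.3} at a zero of $v_2$ gives $\tfrac{\diff v_2}{\diff t}=-K_2^s$, which can be negative, so $v_2$ may well dip below zero outside $[\bar{x}_1,\bar{x}_2]$. Neither the counterparts of \eqref{eq:5.13}--\eqref{eq:5.14} (which are $L^\infty$ bounds on $v_1,v_2$) nor \eqref{eq:7.1} (which concerns $v_1$) say anything about the \emph{location} of the negative-$v_2$ set. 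What is actually needed---and what the argument in \cite{MR4409228} provides---is only the weaker fact that the infimum of $v_2(t,\cdot)$ is attained in $[\bar{x}_1,\bar{x}_2]$ (equivalently, $v_2(t,x)>\kappa\,m(t)$ for $x\notin[\bar{x}_1,\bar{x}_2]$), which is precisely the content of showing that the ``global'' set $\{x\in\R:v_2(t,x)\le\kappa m(t)\}$ coincides with $\Sigma_\delta(t)$; this is most naturally obtained in the course of proving Lemma~\ref{lem:6}. Once that is in hand, your differential inequality $m'(t)\le-(pA^{p-1}-\delta^2)m^2(t)\le0$ at the minimizing label is valid and $q(t)\le1$ follows. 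Your closing paragraph does flag that these ingredients are coupled to Lemma~\ref{lem:6} and to the bootstrap verifying \eqref{eq:7.1}; just be careful that the specific claim you invoke (invariance of the whole negative-$v_2$ set) is not the one that is actually available.
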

	
	\begin{lemma}\label{lem:8}
		It holds that
		\begin{equation}\label{eq:7.2}
			\begin{aligned}
				\int_0^t q^{-\gamma}(\tau)\,\mathrm d\tau &\leq(1-\gamma)^{-1}m^{-1}(0)(pA^{p-1}-\delta)^{-1}(A^{p-1}B^{1-p}-\delta)^{-\gamma}\\
				&\quad\times\big[q^{1-\gamma}(t)-(A^{p-1}B^{1-p}-\delta)^{\gamma-1}\big],
			\end{aligned}
		\end{equation}		
		where  $\gamma \in (0, 1) \cup (1, \infty)$, and		
		\begin{equation}\label{eq:7.3}
			\begin{aligned}
				\int_0^t q^{-1}(\tau)\,\mathrm d\tau &\leq m^{-1}(0)(pA^{p-1}-\delta)^{-1}(A^{p-1}B^{1-p}-\delta)^{-1}\\
				&\quad\times\big[\log(A^{p-1}B^{1-p}-\delta)+\log q(t)\big].
			\end{aligned}
		\end{equation}		
	\end{lemma}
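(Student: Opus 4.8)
The plan is to argue exactly as for the case $p=1$ (Lemma \ref{lem:5}), working along a single Lagrangian trajectory that stays inside the nested family $\Sigma_\delta$. Fix $t\in(0,T_1]$. Since $m(t)=\inf_x v_2(t,x)<0$ while $0<A^{p-1}B^{1-p}-\delta<1$ — the latter because \eqref{eq:2.7} gives $\big(A^{p-1}B^{1-p}\big)^2=A^{2p-2}/B^{2p-2}>8\delta>\delta^2$ and $B>A$ — the threshold $(A^{p-1}B^{1-p}-\delta)m(t)$ lies strictly above $m(t)$, so $\Sigma_\delta(t)\neq\varnothing$; choose $x\in\Sigma_\delta(t)$. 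By Lemma \ref{lem:6} we then have $x\in\Sigma_\delta(\tau)$ for all $\tau\in[0,t]$, so Lemma \ref{lem:7} applies along the whole trajectory $\tau\mapsto r(\tau,x)$: in particular $\frac{\diff}{\diff\tau}r(\tau,x)\leq(pA^{p-1}-\delta)m(0)<0$ and $q(\tau)\leq r(\tau,x)\leq(A^{p-1}B^{1-p}-\delta)^{-1}q(\tau)$. Here $pA^{p-1}-\delta>0$ is again guaranteed by \eqref{eq:2.7}, since $4pA^{2p-2}>7B^{p-1}>7A^{p-1}$ forces $pA^{p-1}>7/4$.

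Next I would pass from $q$ to $r$ using the lower bound $q(\tau)\geq(A^{p-1}B^{1-p}-\delta)r(\tau,x)$, which gives $q(\tau)^{-\gamma}\leq(A^{p-1}B^{1-p}-\delta)^{-\gamma}r(\tau,x)^{-\gamma}$ for any $\gamma>0$; thus it is enough to bound $\int_0^t r(\tau,x)^{-\gamma}\,\diff\tau$. For $\gamma\neq1$, combining the identity $\frac{\diff}{\diff\tau}\big[r^{1-\gamma}\big]=(1-\gamma)\,r^{-\gamma}\,\frac{\diff r}{\diff\tau}$ with $\frac{\diff r}{\diff\tau}\leq(pA^{p-1}-\delta)m(0)<0$ yields, after a brief sign check (separately for $\gamma\in(0,1)$ and $\gamma>1$),
\begin{equation*}
r(\tau,x)^{-\gamma}\leq\frac{1}{(1-\gamma)(pA^{p-1}-\delta)m(0)}\,\frac{\diff}{\diff\tau}\big[r(\tau,x)^{1-\gamma}\big],
\end{equation*}
and integration on $[0,t]$ gives $\int_0^t r^{-\gamma}\,\diff\tau\leq\frac{1}{(1-\gamma)(pA^{p-1}-\delta)m(0)}\big[r^{1-\gamma}(t)-r^{1-\gamma}(0)\big]$. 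Finally I would insert the endpoint bounds from Lemma \ref{lem:7}, namely $r(t,x)\geq q(t)$ and $r(0,x)\leq(A^{p-1}B^{1-p}-\delta)^{-1}q(0)=(A^{p-1}B^{1-p}-\delta)^{-1}$ (recall $q(0)=1$ from \eqref{eq:4.5}); replacing $r^{1-\gamma}(t)$ and $r^{1-\gamma}(0)$ accordingly and multiplying through by $(A^{p-1}B^{1-p}-\delta)^{-\gamma}$ produces \eqref{eq:7.2}. The case $\gamma=1$ is the same argument with $\log r$ in place of $r^{1-\gamma}$ and without the factor $(1-\gamma)^{-1}$, giving \eqref{eq:7.3}.

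The one point that requires care is the sign bookkeeping: because $m(0)<0$, the factor $1-\gamma$ and the monotonicity of $r\mapsto r^{1-\gamma}$ both flip across $\gamma=1$, so each of the three manipulations — reciprocating $\frac{\diff r}{\diff\tau}$, multiplying by $\frac{\diff}{\diff\tau}[r^{1-\gamma}]$, and inserting the endpoint inequalities — must be verified in the two regimes $\gamma\in(0,1)$ and $\gamma>1$; in both, the products of the relevant negative quantities arrange themselves into the stated formula. Everything else (nonemptiness of $\Sigma_\delta(t)$, positivity of $pA^{p-1}-\delta$ and $A^{p-1}B^{1-p}-\delta$, the change of variables) is routine and parallels the proof of Lemma \ref{lem:5}, so the full write-up should be short.
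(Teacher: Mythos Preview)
Your proposal is correct and follows exactly the approach the paper intends: the paper omits the proof of Lemma~\ref{lem:8} entirely (as it does for Lemmas~\ref{lem:3}--\ref{lem:5}, referring to \cite{MR1261868, MR4409228}), and your argument---passing from $q$ to $r$ via Lemma~\ref{lem:7}, integrating the differential inequality for $r^{1-\gamma}$ (resp.\ $\log r$), and inserting the endpoint bounds---is precisely the standard computation those references carry out in the $p=1$ setting, transplanted with the constants from Lemma~\ref{lem:7}. Your sign bookkeeping and the verifications that $A^{p-1}B^{1-p}-\delta\in(0,1)$ and $pA^{p-1}-\delta>0$ from \eqref{eq:2.7} are all correct.
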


	\underline{\bf{Estimates on \(K_{1}^{s}(t, x)\)}.}  The estimates are exactly same as in the proof of Theorem \ref{th:2.1}.
	
	\underline{\bf{Estimates on \(v_{1}(t,x)\)}.}
	Note that
	\begin{equation*}
		\begin{aligned}
			(pA^{p-1}-\delta)^{-1}(A^{p-1}B^{1-p}- \delta)^{-1}<(1-\delta)^{-2}.
		\end{aligned}
	\end{equation*}	
	Then it follows from \eqref{eq:4.1} and \eqref{eq:7.2} that
	\begin{equation}\label{eq:7.4}
		\begin{aligned}
			|v_{1}(t;x)| & \leq\|u_{0}\|_{L^{\infty}}+\int_{0}^{t}|K_{1}^{s}(\tau,  x)| \mathrm{~d} \tau \\
			& \leq \frac{1}{2} C_{0}- C(C_{0} +  C_{1})\big(pA^{p-1}-\delta\big)^{-1}\big(A^{p-1}B^{1-p}- \delta\big)^{s-1} \\
			&\quad\times m^{-1}(0) \big[(A^{p-1}B^{1-p}-\delta)^{-s} - q^{s}(t)\big]\\
			& \leq \frac{1}{2} C_{0}-C(C_{0} + C_{1})(1-\delta)^{-2} m^{-1}(0) \\
			& <C_{0} \quad \text{for }~(t,x) \in (0, T_{2}]\times \mathbb{R},
		\end{aligned}
	\end{equation}
	where $\eqref{eq:2.5}_2$ has been used in the last inequality.
	
	\underline{\bf{Estimates on \(K_{2}^{s}(t, x)\)}.} By \eqref{eq:7.3}, one solves \eqref{eq:5.26} to find that
	\begin{equation*}
		\begin{aligned}
			\|\partial_{x}^{3} u\|_{L^{2}}  &\leq\|u_{0}^{\prime \prime \prime}\|_{L^{2}}\big(A^{p-1}B^{1-p}-\delta\big)^{-\frac{7}{2(pA^{p-1}-\delta)(A^{p-1}B^{1-p}-\delta)}}\\
			&\quad \times q(t)^{-\frac{7}{2(pA^{p-1}-\delta)(A^{p-1}B^{1-p}-\delta)}}\\
			&\leq\bigg(\frac{A^{p-1}}{2B^{p-1}}\bigg)^{-\frac{7B^{p-1}}{2pA^{2p-2}}}\|u_{0}^{\prime \prime \prime}\|_{L^{2}} q(t)^{-\frac{7 B^{p-1}}{2p A^{2p-2}}}\quad\text{for } ~ t \in(0,  T_{2}],
		\end{aligned}			
	\end{equation*}
	which, along with \eqref{eq:5.23}, yields
	\begin{equation*}
		|I_{5}| \leq  C \bigg(\frac{A^{p-1}}{2B^{p-1}} \bigg)^{-\frac{7B^{p-1}}{2pA^{2p-2}}}\|u_{0}^{\prime \prime \prime}\|_{L^{2}}   \eta^{s}q(t)^{-\frac{7 B^{p-1}}{2p A^{2p-2}}}.
	\end{equation*}
	This together with \eqref{K2}, \eqref{eq:5.24} by choosing  $\eta=q(t)^{-1+\frac{7B^{p-1}}{2p A^{2p-2}}}$ gives
	{\small\begin{equation*}\label{eq:7.9}
			\begin{aligned}
				|K^{s}_{2}(t;x)| &\leq  C\bigg[C_{1}  +\bigg(\frac{A^{p-1}}{2B^{p-1}}\bigg)^{-\frac{7B^{p-1}}{2pA^{2p-2}}}\|u_{0}^{\prime \prime \prime}\|_{L^{2}} \bigg]  q(t)^{\big(1-\frac{7B^{p-1}}{p A^{2p-2}}\big)(1-s)-1} \\
				&\leq C\bigg[C_{1}  +\bigg(\frac{A^{p-1}}{2B^{p-1}}\bigg)^{-\frac{7B^{p-1}}{2pA^{2p-2}}}\|u_{0}^{\prime \prime \prime}\|_{L^{2}} \bigg] q(t)^{-2}
				\quad\text{for } ~ (t,x) \in(0,  T_{2}] \times\mathbb{R},
			\end{aligned}
	\end{equation*}}

\noindent	where one has used
	\begin{equation*}
		\bigg(1-\frac{7B^{p-1}}{2pA^{2p-2}}\bigg)(1-s)-1\geq -2,
	\end{equation*}
	which follows from  \eqref{eq:2.7}, $\uwave{s\in (0, 1)}$, and $0<q(t)\leq 1$.
	
	\underline{\bf{Estimates on \(v_{2}(t,x)\)}.}	
	Note that
	\begin{equation*}
		\begin{aligned}
			(pA^{p-1}-\delta)^{-1}(A^{p-1}B^{1-p}- \delta)^{-2}<(1-\delta)^{-3}.
		\end{aligned}
	\end{equation*}	
	Then	it follows from \eqref{eq:4.3} and \eqref{eq:7.3} that
	{\small	\begin{equation*}
			\begin{aligned}
				v_{2}( t;x)
				&\leq C\bigg[C_{1}  +\ \bigg(\frac{A^{p-1}}{2B^{p-1}}\bigg)^{-\frac{7B^{p-1}}{2pA^{2p-2}}}\|u_{0}^{\prime \prime \prime}\|_{L^{2}} \bigg] \int_{0}^{t} q(\tau)^{-2} \mathrm{~d} \tau \\
				&\leq \frac{1}{2} C_{1} q(t)^{-1}-C\bigg[C_{1}  +\ \bigg(\frac{A^{p-1}}{2B^{p-1}}\bigg)^{-\frac{7B^{p-1}}{2pA^{2p-2}}}\|u_{0}^{\prime \prime \prime}\|_{L^{2}} \bigg](pA^{p-1}-\delta)^{-1} \\
				&\quad\times(A^{p-1}B^{1-p}-\delta)^{-2}m^{-1}(0)\big[q(t)^{-1}-(A^{p-1}B^{1-p}-\delta)\big] \\
				&\leq \frac{1}{2} C_{1} q(t)^{-1}-C\bigg[C_{1}  +\bigg(\frac{A^{p-1}}{2B^{p-1}}\bigg)^{-\frac{7B^{p-1}}{2pA^{2p-2}}}\|u_{0}^{\prime \prime \prime}\|_{L^{2}} \bigg]\\
				&\quad\times  (1-\delta)^{-3} m^{-1}(0) q^{-1}(t) \\
				&< C_{1} q(t)^{-1} \quad \text{for }~(t,x) \in (0, T_{2}]\times \mathbb{R},
			\end{aligned}
	\end{equation*}}
	
	\noindent where $\eqref{eq:2.5}_3$ has been used in the last inequality.

	\underline{\bf{The\ contradiction\ arguments \eqref{eq:5.15} and \eqref{eq:5.5}}}. One can get contradictions to \eqref{eq:5.15} and \eqref{eq:5.5} by collecting the estimates above. 
	
	\underline{\bf{The\ a\ priori\ assumption \eqref{eq:7.1}}}. Similar to \eqref{eq:7.4},  one may estimate
	\begin{equation}\label{eq:7.5}
		\begin{aligned}
			&v_{1}(t,  x) \leq u_{0}(x)-C m^{-1}(0)(1-\delta)^{-2}(C_{0} + C_{1})<B,\\
			&v_{1}(t,  x) \geq  u_{0}(x)+ C m^{-1}(0)(1-\delta)^{-2}(C_{0} + C_{1})>A
		\end{aligned}
	\end{equation}
	for $t \in[0,  T_{1}]$  and  $x \in[\bar{x}_{1},  \bar{x}_{2}]$.
	Here one has used $\eqref{eq:2.6}_1$ in \(\eqref{eq:7.5}_1\) and $\eqref{eq:2.6}_2$ in \(\eqref{eq:7.5}_2\), respectively.

	\section{Proof of Theorem \ref{th:2.4}}	
	
	One can follow the arguments in showing Theorem \ref{th:2.2} and Theorem \ref{th:2.3}	to prove Theorem \ref{th:2.4}.
	
	\section{Dispersive properties and weak entropy solutions}
	
	\subsection{Linear estimates}

	We have seen that \eqref{eq:1.1} shares with the Burgers equation a typical property of conservation laws, the possibility of shock formation.
	We briefly comment here on dispersive properties. The first one concerns $L^1-L^\infty$ estimates for the linear equation
	\begin{equation}\label{lin}
\begin{aligned}
	u_t+(I-\partial_x^2)^{-s/2}u_x=0, \quad u(x,0)=\phi(x).
\end{aligned}
	\end{equation}
	
	The case $s=2$ (linearized Fornberg-Whitham equation) corresponds to the linear Benjamin-Bona-Mahony (BBM ) equation	
	\begin{equation}\label{lBBM}
\begin{aligned}
	u_t+(I-\partial_x^2)^{-1}u_x=0,
\end{aligned}
	\end{equation}
for which J. Albert in \cite{MR1009061} proved the following decay estimate for the solution $u$ of \eqref{lBBM} with initial data $\phi\in L^1(\R)\cap H^4(\R)$:
\begin{equation*}
\begin{aligned}	
	\|u(\cdot,t)\|_{L^\infty}\lesssim (\|\phi\|_{L^1}+\|\phi\|_{H^4})(1+t)^{-1/3},\quad \forall t\geq 1.
\end{aligned}
\end{equation*}
In \cite{MR840595}, Albert proved a similar decay estimate in a different functional setting, that is 
	\begin{equation*}
\begin{aligned}
	\|u(\cdot,t)\|_{L^\infty}\lesssim \|(1+|x|)\phi\|_{L^2}(1+t)^{-1/3},\quad \forall t\geq 1.
\end{aligned}
\end{equation*}		
Similar linear estimates hold for \eqref{lin} as well. 
%It is very likely that similar linear estimates hold for the more general equation \eqref{eq:1.1}. We will go back to this issue in a subsequent paper.

Let 
\begin{equation*}
\begin{aligned}
\Phi(\xi)=\Phi(\xi;x,t):=t^{-1}x\xi+(1+|\xi|^2)^{s/2}\xi.
\end{aligned}
\end{equation*}	
One may write
\begin{equation*}
\begin{aligned}
e^{t(1-\partial_x^2)^{s/2}\partial_x}  g(t,x)
&=\frac{1}{\sqrt{2\pi}}\int_{-\infty}^\infty e^{\mathrm{i}t\Phi(\xi)}\widehat{g}(t,\xi)\varphi(2^{10}\xi)\,\diff \xi\\
&\quad+\frac{1}{\sqrt{2\pi}}\int_{-\infty}^\infty e^{\mathrm{i}t\Phi(\xi)}\widehat{g}(t,\xi)\big(1-\varphi(2^{10}\xi)\big)\,\diff \xi\\
&=\colon I_{L}(t,x,g)+I_{H}(t,x,g).
\end{aligned}
\end{equation*}
Then, we have the following decay estimates. 
\begin{proposition}\label{decay:linear} Let \(N_0=[s]+1\), \(t\geq 1,\ x\in\R\) and \(g\) be a real function. Assume
\begin{equation*}
	\begin{aligned}
	\|\widehat{g}\|_{L^\infty}+t^{-1/6}(\|g\|_{H^{1,1}}+\|g\|_{H^{N_0}})\leq 1.
	\end{aligned}
	\end{equation*}
Then
	\begin{equation}\label{7}
	\begin{aligned}
	|I_{L}(t,x,g)|\lesssim t^{-1/3}\langle (x+t)/t^{1/3}\rangle^{-1/4}
	\end{aligned}
	\end{equation}
and	
	\begin{equation}\label{8}
	\begin{aligned}
	|I_{H}(t,x,g)|\lesssim t^{-1/3}. 
	\end{aligned}
	\end{equation}	

Consequently, for the solution \(u\) to \eqref{eq:1.1}, it holds that
	\begin{equation*}
	\begin{aligned}
	\|u(\cdot,t)\|_{L^\infty}\lesssim t^{-1/3}[\|u\|_{L^1}+t^{-1/6}(\|u\|_{H^{1,1}}+\|u\|_{H^{N_0}})],\quad \forall t\geq 1.
	\end{aligned}
	\end{equation*}
	
\end{proposition}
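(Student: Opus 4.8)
The plan is to prove the two pointwise bounds \eqref{7} and \eqref{8} by a van der Corput/stationary-phase analysis of the oscillatory integrals; the $L^\infty$ decay of $u$ then follows at once. Throughout write $m(\xi)=(1+\xi^2)^{s/2}\xi$, so the full phase is $t\Phi(\xi)=x\xi+t\,m(\xi)$. The starting point is the elementary computation
\[
m'(\xi)=(1+\xi^2)^{\frac{s}{2}-1}\bigl(1+(1+s)\xi^2\bigr),\qquad
m''(\xi)=s\,\xi\,(1+\xi^2)^{\frac{s}{2}-2}\bigl(3+(1+s)\xi^2\bigr),
\]
which shows that $m''$ vanishes only at $\xi=0$, that $m'''(0)=3s\neq0$, and that $m(\xi)=\xi+\tfrac{s}{2}\xi^3+\bigO(\xi^5)$ near the origin. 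Hence near $\xi=0$ the phase $t\Phi$ is a controlled perturbation of the Airy phase $(x+t)\xi+\tfrac{s}{2}t\xi^3$, which produces both the $t^{-1/3}$ rate and the shift $x+t$ in \eqref{7}; for $\abs\xi$ bounded away from $0$ the phase is nondegenerate, but its curvature $\abs{m''(\xi)}\sim\abs\xi^{s-1}$ tends to $0$ as $\abs\xi\to\infty$ when $s<1$, which is exactly what forces the regularity assumptions.

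\emph{Low frequencies.} On $\supp\varphi(2^{10}\cdot)$ one has $\abs\xi\le2^{-9}$, so the Taylor remainders above are uniformly small. I would rescale $\xi=t^{-1/3}\zeta$, writing $I_L=t^{-1/3}(2\pi)^{-1/2}\int e^{\I\Psi(\zeta)}h(t,\zeta)\,\diff\zeta$ with $\Psi(\zeta)=\mu\zeta+\tfrac{s}{2}\zeta^3+\bigO(t^{-2/3}\zeta^5)$, $\mu:=(x+t)t^{-1/3}$, $h(t,\zeta):=\widehat g(t,t^{-1/3}\zeta)\varphi(2^{10}t^{-1/3}\zeta)$, and the $\zeta$-support contained in $\abs\zeta\lesssim t^{1/3}$. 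Since $\Psi''(\zeta)=t^{1/3}m''(t^{-1/3}\zeta)=3s\,\zeta\bigl(1+\bigO(t^{-2/3}\zeta^2)\bigr)$ and $\Psi'''\sim s$ uniformly on the support, $\Psi'$ has at most the two stationary points $\zeta_\pm\approx\pm\sqrt{-2\mu/(3s)}$ (present only for $\mu<0$), with $\abs{\Psi''(\zeta_\pm)}\sim\abs\mu^{1/2}$. I would then cut the $\zeta$-integral into unit-length blocks; on each block the amplitude variation is $\int\abs{\partial_\zeta h}\,\diff\zeta\lesssim t^{-1/6}\|(\widehat g)'\|_{L^2}+\norm{\widehat g}_{L^\infty}\lesssim t^{-1/6}\norm g_{H^{1,1}}+1\lesssim1$ by the normalization, so $h$ is, on each block, a constant of modulus $\le1$ plus a remainder of total variation $\le1$. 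The second-derivative van der Corput lemma on the $\bigO(1)$ blocks meeting $\{\zeta_\pm\}$ contributes $\lesssim\abs\mu^{-1/4}$; the third-derivative van der Corput lemma on the block through $\zeta=0$ contributes $\lesssim1$; and on all other blocks $\abs{\Psi'}$ is bounded below in terms of the distance to $\{\zeta_\pm,0\}$, so repeated integration by parts (using that $\Psi'$ is monotone on the half-lines $\zeta\lessgtr0$) gives decay that is summable over blocks. Adding up --- and noting that every piece is in fact $\lesssim\langle\mu\rangle^{-1/4}$, since the ``constant'' and central blocks matter only when $\abs\mu\lesssim1$, while for $\abs\mu\gg1$ they are beaten by the integration-by-parts decay --- yields \eqref{7}.

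\emph{High frequencies.} For $I_H$ I would decompose dyadically, $I_H=\sum_{j\geq-10}I_{H,j}$ with $I_{H,j}$ supported in $\abs\xi\sim2^j$. On each block the phase has at most one, nondegenerate, stationary point and $\abs{\partial_\xi^2(t\Phi)}=t\,\abs{m''}\gtrsim t\,2^{j(s-1)}$ for $2^j\gtrsim1$ (for the finitely many blocks with $2^j\sim2^{-10}$ one uses instead $\abs{\partial_\xi^3(t\Phi)}\gtrsim ts$), so the second-derivative van der Corput lemma gives
\[
\abs{I_{H,j}}\lesssim t^{-1/2}2^{j(1-s)/2}\Bigl(\norm{\widehat g}_{L^\infty(\abs\xi\sim2^j)}+\int_{\abs\xi\sim2^j}\abs{(\widehat g)'}\,\diff\xi\Bigr).
\]
The key point is that the amplitude and its variation at scale $2^j$ are small enough to beat the $2^{j(1-s)/2}$ loss: $\norm g_{H^{1,1}}=\|\langle x\rangle g\|_{H^1}$ controls $\|\langle\xi\rangle\widehat{xg}\|_{L^2}$, whence $\int_{\abs\xi\sim2^j}\abs{(\widehat g)'}\,\diff\xi\lesssim2^{-j/2}\norm g_{H^{1,1}}$, while a one-dimensional Sobolev inequality on the interval, combined with $\norm g_{H^{N_0}}$, makes $\norm{\widehat g}_{L^\infty(\abs\xi\sim2^j)}$ decay by a strictly negative power of $2^j$. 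Plugging in, each term in $\abs{I_{H,j}}$ carries a strictly negative power of $2^j$, so the series converges and, after inserting the normalization, $\abs{I_H}\lesssim t^{-1/2}(\norm g_{H^{1,1}}+\norm g_{H^{N_0}})\lesssim t^{-1/3}$; this proves \eqref{8}. Finally, the $L^\infty$ bound for $u$ follows by applying \eqref{7}--\eqref{8} to the relevant profile, using $\norm{\widehat g}_{L^\infty}\lesssim\norm g_{L^1}$, $\langle\cdot\rangle^{-1/4}\le1$, and homogeneity to remove the normalization.

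The step I expect to be the main obstacle is the low-frequency analysis around the degenerate stationary point $\xi=0$: it is what drops the decay from the generic $t^{-1/2}$ to $t^{-1/3}$, and extracting the sharp weight $\langle(x+t)/t^{1/3}\rangle^{-1/4}$ in \eqref{7} requires carefully matching the second- and third-order van der Corput estimates across the transition between the Airy regime and the oscillatory/exponential tails (equivalently, establishing the uniform Airy-type bound $\abs{\mathrm{Ai}(\mu)}\lesssim\langle\mu\rangle^{-1/4}$ for the model integral and verifying that the cubic truncation of $m$ and the frequency cut-off do not spoil it), while keeping the amplitude under control block by block. The high-frequency estimate is more routine; its only subtlety is that for $0<s<1$ the dispersion degenerates at infinity, so one must spend the regularity $H^{N_0}$ (and use $\norm g_{H^{1,1}}$ in the strong form $\|\langle x\rangle g\|_{H^1}$) to make the dyadic sum converge.
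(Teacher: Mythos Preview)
Your low-frequency argument --- rescaling $\xi=t^{-1/3}\zeta$ to an Airy-type phase and applying van der Corput block by block --- is a legitimate alternative to the paper's route, which works directly in $\xi$, locates the stationary point $\xi_0$, splits into the cases $\xi_0\le t^{-1/3}$ and $\xi_0\ge t^{-1/3}$, and in the latter performs a further dyadic decomposition around $\xi_0$. Both strategies yield \eqref{7}; yours makes the link with the Airy weight $\langle\mu\rangle^{-1/4}$ more transparent, while the paper's stays closer to the references it cites.

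The high-frequency part, however, has a genuine gap, and it originates in a sign error inherited from a typo in the paper's displayed phase. You take $m(\xi)=(1+\xi^2)^{s/2}\xi$, but the dispersion in \eqref{eq:1.1} is the Bessel potential with symbol $(1+\xi^2)^{-s/2}$; the paper's own proof of \eqref{8} uses the correct $m(\xi)=\xi(1+\xi^2)^{-s/2}$ (note the line ``Direct calculations yield $\big|\tfrac{d}{d\xi}\big((1+|\xi|^2)^{-s/2}\xi\big)\big|\ge c_0|\xi|^{-s}$'' and the factor $2^{(1+s)k/2}$ in the paper's dyadic estimate). With the correct sign one has $|m''(\xi)|\sim|\xi|^{-s-1}$, not $|\xi|^{s-1}$, for large $|\xi|$, so second-derivative van der Corput on $|\xi|\sim2^j$ loses $2^{j(1+s)/2}$ rather than $2^{j(1-s)/2}$. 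Combined with your amplitude-variation bound $\int_{|\xi|\sim2^j}|(\widehat g)'|\lesssim 2^{-j/2}\|g\|_{H^{1,1}}$ this produces a term $t^{-1/2}2^{js/2}\|g\|_{H^{1,1}}$, which does \emph{not} sum over $j$ for any $s>0$; the $H^{N_0}$ norm cannot rescue this piece because only $\|\langle x\rangle g\|_{H^1}$, not $\|xg\|_{H^{N_0}}$, is assumed. The paper circumvents this by proving two sharper dyadic bounds --- one obtained from a further decomposition around the high-frequency stationary point and explicit integration by parts, yielding an additional $t^{-3/4}2^{(3s-1)k/4}$-type term, and one of $L^1$ type --- and then splitting the sum at the $t$-dependent threshold $2^k\sim t^{1/(100N_0)}$, using the first estimate below the threshold and the second (together with $H^{N_0}$) above it. Your single van der Corput estimate would need a comparable refinement to close.
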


\begin{proof}[Proof of \eqref{7}]  The proof is close to \cite{MR3519470} (see also \cite{MR4278400}). 	
	It is easy to see that 
	\(\partial_\xi\Phi(\xi)=0\) on \([-2^{-9},2^{-9}]\) has no root or two roots with opposite signs (corresponding to \(x>-t\)). It suffices to consider the latter case since the former case is much easier and follows from similar calculations. 
	Let \(\xi_0\) be the positive root of \(\partial_\xi\Phi(\xi_0)=0\) on \([-2^{-9},2^{-9}]\), and set
\begin{align*}
	I_L^{+}:=\int_0^\infty e^{\mathrm{i}t\Phi(\xi)}\widehat{g}(t,\xi)\varphi(2^{10}\xi)\,\diff \xi.
\end{align*}
To verify \eqref{7}, up to taking complex conjugates, it suffices to show 
	\begin{align*}
	|I_L^{+}|\lesssim t^{-1/3}\max(t^{1/3}\xi_0,1)^{-1/2},
	\end{align*}
which will be divided into two cases depending on the size of \(\xi_0\). 
	
	{\textbf{Case 1: \(\xi_0\leq t^{-1/3}\)}.} In this case, one needs only to show that $I_L^{+}$ is bounded by \(t^{-1/3}\).
For this, we decompose \(I_L^{+}\) as follows: 
	\begin{equation*}
	\begin{aligned}
	I_L^{+}
	&=\int_0^\infty e^{\mathrm{i}t\Phi(\xi)}\widehat{g}(t,\xi)\varphi(2^{-10}t^{1/3}\xi)\varphi(2^{10}\xi)\,\diff \xi\\
	&\quad+\int_0^\infty e^{\mathrm{i}t\Phi(\xi)}\widehat{g}(t,\xi)\big(1-\varphi(2^{-10}t^{1/3}\xi)\big)\varphi(2^{10}\xi)\,\diff \xi
=:A_{1}+A_{2}.
	\end{aligned}
	\end{equation*}
Clearly \(A_1\) can be controlled by the desired bound \(t^{-1/3}\).
To estimate \(A_2\), one uses integration by part to find that
	\begin{equation*}
	\begin{aligned}
	|A_2|&\lesssim 
	t^{-1}\int_0^\infty \big|\partial_\xi\big[(\partial_\xi\Phi)^{-1}\big(1-\varphi(2^{-10}t^{1/3}\xi)\big)\varphi(2^{10}\xi)\big]\widehat{g}(t,\xi)\big|\,\diff \xi\\
	&\quad+t^{-1}\int_0^\infty \big|(\partial_\xi\Phi)^{-1}\big(1-\varphi(2^{-10}t^{1/3}\xi)\big)\varphi(2^{10}\xi)\partial_\xi\widehat{g}(t,\xi)\big|\,\diff \xi\\
&=:A_{21}+A_{22}
	\end{aligned}
	\end{equation*}
with
\begin{equation*}
	\begin{aligned}
	A_{21}&\lesssim 
	t^{-1}\int_0^\infty \big|\partial_\xi\big[(\partial_\xi\Phi)^{-1}\big(1-\varphi(2^{-10}t^{1/3}\xi)\big)]\varphi(2^{10}\xi)\widehat{g}(t,\xi)\big|\,\diff \xi\\
	&\quad+t^{-1}\int_0^\infty \big|(\partial_\xi\Phi)^{-1}\big(1-\varphi(2^{-10}t^{1/3}\xi)\partial_\xi\varphi(2^{10}\xi)\widehat{g}(t,\xi)\big|\,\diff \xi\\
&=:A_{21}^1+A_{21}^2.
	\end{aligned}
	\end{equation*}
	The key point in estimating \(A_{21}\) and \(A_{22}\) is to bound \(|\partial_\xi\Phi|\) from below. 
	In fact, by Taylor's formula, one has 
	\begin{equation*}
	\begin{aligned}
	|\partial_\xi\Phi|=\bigg|\frac{3s}{2}(\xi_0^2-\xi^2)+\mathcal{O}(\xi_0^4)+\mathcal{O}(\xi^4)\bigg|
	\gtrsim |\xi_0^2-\xi^2|\gtrsim \xi^2
	\end{aligned}
	\end{equation*}
since \(\xi\gg \xi_0\) on the support of \(A_2\). Since \(\varphi(2^{10}\xi)\) is bounded, \(A_{21}^1\) and \(A_{22}\) can be shown similarly as \cite{MR3519470} to have the bound \(t^{-1/3}\). For the new term \(A_{21}^2\), one has
\begin{equation*}
\begin{aligned}
A_{21}^2&\lesssim t^{-1}\int_0^\infty \big|(\partial_\xi\Phi)^{-1}\big(1-\varphi(2^{-10}t^{1/3}\xi)\big)\partial_\xi\varphi(2^{10}\xi)\widehat{g}(t,\xi)\big|\,\diff \xi\\
&\lesssim t^{-1}\|\widehat{g}\|_{L^\infty}\int_0^\infty \xi^{-2}|1-\varphi(2^{-10}t^{1/3}\xi)|\,\diff \xi
\lesssim t^{-2/3}.
\end{aligned}
\end{equation*}
Hence \(A_2\) is bounded by the desired bound \(t^{-1/3}\).

{\textbf{Case 2: \(\xi_0\geq t^{-1/3}\)}.} In this case, one shall obtain a bound \(t^{-1/2}\xi_0^{-1/2}\) for $I_L^{+}$. 
	To this end, one instead decomposes
	\begin{equation*}
	\begin{aligned}
	I_L^{+}
	&=\int_0^\infty e^{\mathrm{i}t\Phi(\xi)}\widehat{g}(t,\xi)\big(1-\psi(\xi/\xi_0)\big)\varphi(2^{10}\xi)\,\diff \xi\\
	&\quad+\int_0^\infty e^{\mathrm{i}t\Phi(\xi)}\widehat{g}(t,\xi)\psi(\xi/\xi_0)\varphi(2^{10}\xi)\,\diff \xi
=:A_3+A_4.
	\end{aligned}
	\end{equation*}
Integration by parts leads to
	\begin{equation*}
	\begin{aligned}
	A_3&\lesssim t^{-1}\int_0^\infty \left|\partial_\xi\big[(\partial_\xi\Phi)^{-1}\big(1-\psi(\xi/\xi_0)\big)\varphi(2^{10}\xi)\big]\widehat{g}(t,\xi)\right|\,\diff \xi\\
	&\quad+t^{-1}\int_0^\infty \left|(\partial_\xi\Phi)^{-1}\big(1-\psi(\xi/\xi_0)\big)\varphi(2^{10}\xi)\partial_\xi\widehat{g}(t,\xi)\right|\,\diff \xi\\
&=:A_{31}+A_{32}
	\end{aligned}
	\end{equation*}
with
\begin{equation*}
	\begin{aligned}
	A_{31}&\lesssim t^{-1}\int_0^\infty \left|\partial_\xi\big[(\partial_\xi\Phi)^{-1}\big(1-\psi(\xi/\xi_0)\big)\big]\varphi(2^{10}\xi)\widehat{g}(t,\xi)\right|\,\diff \xi\\
	&\quad+t^{-1}\int_0^\infty \left|(\partial_\xi\Phi)^{-1}\big(1-\psi(\xi/\xi_0)\big)\partial_\xi\varphi(2^{10}\xi)\widehat{g}(t,\xi)\right|\,\diff \xi\\
&=:A_{31}^1+A_{32}^2.
	\end{aligned}
	\end{equation*}
Note that
	$
	|\partial_\xi\Phi|
	\gtrsim |\xi_0^2-\xi^2|\gtrsim \max(\xi,\xi_0)^2
	$
on the support of \(A_3\). Again, since \(\varphi(2^{10}\xi)\) is bounded,  one can follow \cite{MR3519470} to show \(A_{31}^1\) and \(A_{32}\) to have the bound \(t^{-1/2}\xi_0^{-1/2}\). It remains to consider \(A_{32}^2\). Indeed, it holds that
	\begin{equation*}
	\begin{aligned}
	A_{32}^2
	\lesssim t^{-1}\|\widehat{g}\|_{L^\infty}\int_0^\infty \max(\xi,\xi_0)^{-2}|1-\psi(\xi/\xi_0)|\,\diff \xi
	\lesssim t^{-1}\xi_0^{-3},
	\end{aligned}
	\end{equation*}
which is better than the desired bound
	\(t^{-1/2}\xi_0^{-1/2}\) due to \(\xi_0\geq t^{-1/3}\). Thus, we have shown that \(A_3\) satisfies the desired bound \(t^{-1/2}\xi_0^{-1/2}\).

We next deal with \(A_4\).
  Let \(l_0\) be the smallest integer satisfying \(2^{l_0}\geq (t\xi_0)^{-1/2}\). Then, it holds that
	\begin{equation*}
	\begin{aligned}
	A_4&= \int_{-\infty}^\infty e^{\mathrm{i}t\Phi(\xi)}\widehat{g}(t,\xi)\psi(\xi/\xi_0)\varphi\big(2^{-l_0}(\xi-\xi_0)\big)\varphi(2^{10}\xi)\,\diff \xi\\
	&\quad+\sum_{l\geq l_0+1}\int_{-\infty}^\infty e^{\mathrm{i}t\Phi(\xi)}\widehat{g}(t,\xi)\psi(\xi/\xi_0)\psi\big(2^{-l}(\xi-\xi_0)\big)\varphi(2^{10}\xi)\,\diff \xi\\
&=:A_{4l_0}+\sum_{l\geq l_0+1}A_{4l}.
	\end{aligned}
	\end{equation*}
First, one has
	\begin{equation*}
	\begin{aligned}
	|A_{4l_0}|
	\lesssim 2^{l_0}\|\widehat{g}\|_{L^\infty}\lesssim t^{-1/2}\xi_0^{-1/2}.
	\end{aligned}
	\end{equation*}
It remains to estimate \(A_{4l}\) for \(l\geq l_0+1\). For this, one integrates by parts to deduce that
	\begin{equation*}
	\begin{aligned}
	|A_{4l}|&\lesssim 
	t^{-1}\int_{-\infty}^\infty \big|\partial_\xi\big[(\partial_\xi\Phi)^{-1}\psi(\xi/\xi_0)\psi\big(2^{-l}(\xi-\xi_0)\big)\varphi(2^{10}\xi)\big]\widehat{g}(t,\xi)\big|\,\diff \xi\\
	&\quad+t^{-1}\int_{-\infty}^\infty \big|(\partial_\xi\Phi)^{-1}\psi(\xi/\xi_0)\psi\big(2^{-l}(\xi-\xi_0)\big)\varphi(2^{10}\xi)\partial_\xi\widehat{g}(t,\xi)\big|\,\diff \xi\\
&=:A_{4l,1}+A_{4l,2}
	\end{aligned}
	\end{equation*}
with
\begin{equation*}
	\begin{aligned}
	A_{4l,1}&\lesssim 
	t^{-1}\int_{-\infty}^\infty \big|\partial_\xi\big[(\partial_\xi\Phi)^{-1}\psi(\xi/\xi_0)\psi\big(2^{-l}(\xi-\xi_0)\big)\big]\varphi(2^{10}\xi)\widehat{g}(t,\xi)\big|\,\diff \xi\\
	&\quad+t^{-1}\int_{-\infty}^\infty \big|(\partial_\xi\Phi)^{-1}\psi(\xi/\xi_0)\psi\big(2^{-l}(\xi-\xi_0)\big)\partial_\xi\varphi(2^{10}\xi)\widehat{g}(t,\xi)\big|\,\diff \xi\\
&=:A_{4l,1}^1+A_{4l,2}^2.
	\end{aligned}
	\end{equation*}
Observe that 
	$
	|\partial_\xi\Phi|
	\gtrsim  2^l\xi_0
	$
on the support of \(A_{4l}\). For the same reason as before, it suffices to focus on the new term \(A_{4l,2}^2\), which can be estimated as follows:
\begin{equation*}
\begin{aligned}
A_{4l,2}^2
\lesssim t^{-1}2^{-l}\xi_0^{-1}\|\widehat{g}\|_{L^\infty}\int_{-\infty}^\infty \psi(\xi/\xi_0)\psi\big(2^{-l}(\xi-\xi_0)\big)\,\diff \xi
\lesssim t^{-1}2^{-l},
\end{aligned}
\end{equation*}
which yields the desired bound \(t^{-1/2}\xi_0^{-1/2}\) by summation over \(l\geq l_0+1\) and using \(2^{l_0}\geq (t\xi_0)^{-1/2}\). Hence, we have also shown that \(A_4\) satisfies the desired bound \(t^{-1/2}\xi_0^{-1/2}\).

\end{proof}

\begin{proof}[Proof of \eqref{8}]  
The proof is similar to \cite{MR3961297} (see also \cite{MR4309883}). 	
Observe
	\begin{equation*}
	\begin{aligned}
	I_{H}:=\sum_{k\in \mathbb{Z}}\underbrace{\int_{-\infty}^\infty e^{\mathrm{i}t\Phi(t,\xi)}\widehat{P_kg}(t,\xi)\big(1-\varphi(2^{10}\xi)\big)\,\diff \xi}_{I_{H, k}}\approx \sum_{k\in \mathbb{N}}I_{H, k},
	\end{aligned}
	\end{equation*}
where one has used the property of the support of the integral. To show \eqref{8}, we first prove 	  
	\begin{equation}\label{14}
	\begin{aligned}
	|I_{H, k}|
	\lesssim t^{-1/2}2^{(1+s)k/2}\|\widehat{P_kg}\|_{L^\infty}
	+t^{-3/4}2^{(3s-1)k/4}(\|\widehat{P_kg}\|_{L^2}+2^k\|\partial\widehat{P_kg}\|_{L^2}),
	\end{aligned}
	\end{equation}
	and
	\begin{equation}\label{15}
	\begin{aligned}
	|I_{H, k}|
	\lesssim t^{-1/2}2^{(1+s)k/2}\|P_kg\|_{L^1}.
	\end{aligned}
	\end{equation}

We only show \eqref{14} since \eqref{15} is much easier.  	
When \(t\lesssim 2^{(s-1)k}\), it is easy to see that
	\begin{align*}
	|I_{H, k}|\lesssim 2^k\|\widehat{P_kg}\|_{L^\infty}\lesssim t^{-1/2}2^{(1+s)k/2}\|\widehat{P_kg}\|_{L^\infty}.
	\end{align*}
	It remains to consider \(t\gtrsim 2^{(s-1)k}\). Direct calculations yield
	\begin{align*}
	\bigg|\frac{\diff}{\diff \xi}\big((1+|\xi|^2)^{-s/2}\xi\big)\bigg|\geq c_0|\xi|^{-s},\quad \text{as}\
 |\xi|\geq 1/100
	\end{align*}
for some constant \(c_0>0\) independent of \(\xi\).		
	Let
	\begin{align*}
	\mathcal{I}:=\{k\in \mathbb{N}:  \frac{c_0}{4} |tx^{-1}|\leq 2^{sk}\leq 4c_0|tx^{-1}|\}.
	\end{align*}

	\noindent{\bf{Case 1: \(k\in \mathbb{N}\setminus\mathcal{I}\).}} 
	Integration by parts yields
	\begin{equation*}
	\begin{aligned}
	|I_{H, k}|
	&\lesssim t^{-1}\int_{-\infty}^\infty \big|\partial_\xi\big[(\partial_\xi\Phi)^{-1}\big(1-\varphi(2^{10}\xi)\big)\psi_k(\xi)\big]\widehat{P_kg}(t,\xi)\big|\,\diff \xi\\
	&\quad+t^{-1}\int_{-\infty}^\infty \big|(\partial_\xi\Phi)^{-1}\big(1-\varphi(2^{10}\xi)\big)\psi_k(\xi)\partial_\xi\widehat{P_kg}(t,\xi)\big|\,\diff \xi\\
&=:B_1+B_2.
	\end{aligned}
	\end{equation*}
	Notcing that 
	$
	|\partial_\xi\Phi(\xi)|\gtrsim \big||t^{-1}x|-c_0|\xi|^{-s}\big|\gtrsim 2^{-sk}
	$
on the support of \(I_{H, k}\),
	one may estimate 
	\begin{equation*}
	\begin{aligned}
	B_1
	\lesssim t^{-1}2^{(s-1/2)k}\|\widehat{P_kg}\|_{L^2},
	\end{aligned}
	\end{equation*}
	and
	\begin{equation*}
	\begin{aligned}
	B_2
	\lesssim t^{-1}2^{(s+1/2)k}\|\partial\widehat{P_kg}\|_{L^2}.
	\end{aligned}
	\end{equation*}
	Recalling \(t\gtrsim 2^{(s-1)k}\), we obtain
	\begin{equation*}
	\begin{aligned}
	|I_{H, k}|
	\lesssim t^{-3/4}2^{(3s-1)k/4}(\|\widehat{P_kg}\|_{L^2}+2^k\|\partial\widehat{P_kg}\|_{L^2}).
	\end{aligned}
	\end{equation*}

	\noindent{\bf{Case 2: \(k\in \mathcal{I}\).}} Notice that 
	\(\partial_\xi\Phi(\xi)=0\) on \(\mathbb{R}\setminus[-2^{-9},2^{-9}]\) has no root or two roots with opposite signs (corresponding to \(x>0\)). 
	We only consider the latter case since the other case is much easier. Denote by \(\xi_0\) the positive root of \(\partial_\xi\Phi(\xi)=0\) on \(\mathbb{R}\setminus[-2^{-9},2^{-9}]\).
	 Let \(l_0\) be the smallest integer satisfying  \(2^{l_0}\geq t^{-1/2}2^{(1+s)k/2}\).  Then, one has
	\begin{align*}
	I_{H, k}
	&= \int_{-\infty}^\infty e^{\mathrm{i}t\Phi(\xi)}\widehat{P_kg}(t,\xi)\big(1-\varphi(2^{10}\xi)\big)\varphi_{l_0}\big(\xi-\xi_0\big)\,\diff \xi\\
	&\quad+\sum_{l\geq l_0+1}\int_{-\infty}^\infty e^{\mathrm{i}t\Phi(\xi)}\widehat{P_kg}(t,\xi)\big(1-\varphi(2^{10}\xi)\big)\psi_l(\xi-\xi_0)\,\diff \xi\\
&=:J_{l_0}+\sum_{l\geq l_0+1}J_l.
	\end{align*}
First, one can easily check that
	\begin{align*}
	|J_{l_0}|
	\leq t^{-1/2}2^{(1+s)k/2}\|\widehat{P_kg}\|_{L^\infty}.
	\end{align*}
It remains to bound \(J_l\) for \(l\geq l_0+1\). Integration by parts yields
\begin{equation*}
\begin{aligned}
|J_l|
&\lesssim t^{-1}\int_{-\infty}^\infty \big|\partial_\xi\big[(\partial_\xi\Phi)^{-1}\big(1-\varphi(2^{10}\xi)\big)\psi_l(\xi-\xi_0)\psi_k(\xi)\big]\widehat{P_kg}(t,\xi)\big|\,\diff \xi\\
&\quad+t^{-1}\int_{-\infty}^\infty \big|(\partial_\xi\Phi)^{-1}\big(1-\varphi(2^{10}\xi)\big)\psi_l(\xi-\xi_0)\psi_k(\xi)\partial_\xi\widehat{P_kg}(t,\xi)\big|\,\diff \xi\\
&=:B_3+B_4.
\end{aligned}
\end{equation*}
Since  
$
|\partial_\xi\Phi(\xi)|\gtrsim \big||\xi_0|^{-s}-|\xi|^{-s}\big|\gtrsim 2^{l-(1+s)k}
$
on the support of \(J_l\), it follows that
\begin{equation*}
\begin{aligned}
B_3
\lesssim t^{-1}2^{-l+(1+s)k}\|\widehat{P_kg}\|_{L^\infty},
\end{aligned}
\end{equation*}
and
\begin{equation*}
\begin{aligned}
B_4
\lesssim t^{-1}2^{-\frac{l}{2}+(1+s)k}\|\partial\widehat{P_kg}\|_{L^2}.
\end{aligned}
\end{equation*}
Therefore
\begin{align*}
\sum_{l\geq l_0+1}J_l\lesssim t^{-1/2}2^{(1+s)k/2}\|\widehat{P_kg}\|_{L^\infty}
+t^{-3/4}2^{(3s-1)k/4}2^k\|\partial\widehat{P_kg}\|_{L^2}.
\end{align*}

Finally, we are in the position to show \eqref{8}. Let \(\theta=(N_0-s)/2>0\).
 If \(2^k\leq t^{1/(100N_0)}\), then we use \eqref{14} to deduce that 
\begin{equation}\label{16}
\begin{aligned}
2^{\theta k}|I_{H, k}|
\lesssim t^{-1/2}2^{(1+s)k/2}2^{\theta k}+t^{-3/4}2^{(3s-1)k/4}2^{(1+\theta)k}t^{1/6}\lesssim t^{-1/3}.
\end{aligned}
\end{equation}
If \(2^k\geq t^{1/(100N_0)}\), then it follows from \eqref{15} that 
\begin{equation}\label{17}
\begin{aligned}
2^{\theta k}|I_{H, k}|
&\lesssim t^{-1/2}2^{(1+s)k/2}2^{\theta k}2^{-k/2}2^{-N_0k/2}\\
&\quad\times\|P_k g\|_{H^{N_0}}^{1/2}(\|\widehat{P_k g}\|_{L^2}+2^k\|\partial\widehat{P_k g}\|_{L^2})^{1/2}\\
&\lesssim t^{-1/2}t^{1/12} t^{1/12}\lesssim t^{-1/3},
\end{aligned}
\end{equation}
where one has used (see for instance \cite{MR3961297})
\begin{equation*}
\begin{aligned}
\|P_k g\|_{L^2}\lesssim 2^{-k/2}\|P_k g\|_{L^2}^{1/2}(\|\widehat{P_k g}\|_{L^2}+2^k\|\partial\widehat{P_k g}\|_{L^2})^{1/2}.
\end{aligned}
\end{equation*}

Thanks to the decay factor \(2^{\theta k}\), \eqref{16} and \eqref{17} suffice to fulfil \eqref{8}.

\end{proof}

	\begin{remark}
	The linear estimates in \cite{MR1009061, MR840595} are used to prove the global existence and decay of small solutions to the generalized BBM equation:
\begin{equation*}
\begin{aligned}
	u_t+u_x+u^pu_x-u_{xxt}=0,
\end{aligned}
\end{equation*}		
when $p\geq 4$.
	
	On the other hand, Kwak and Munoz \cite{MR3931840} proved decay properties for small solutions of the generalized BBM equations, for any $p\geq 1$, in the region:
	\begin{equation*}
\begin{aligned}
	I(t)=(-\infty,-at)\cup ((1+b)t,\infty),\quad t>0,
\end{aligned}
\end{equation*}			
for any $b>0,\ a>1/8$.
	
	It would be interesting to extend this result to the generalized Fornberg-Whitham equations.
	\end{remark}

	\subsection{Solitary wave solutions}
	In order to study their long wave limit, we rescale \eqref{eq:1.1} with $p=1$ as
	\begin{equation}\label{resc}
\begin{aligned}
	u_t+\varepsilon uu_x-\mathcal K^\varepsilon_s u_x=0,
\end{aligned}
	\end{equation}
where $\varepsilon\ll 1$ and $\mathcal K_s^\varepsilon= (1-\varepsilon \partial_x^2)^{-s/2}$.
Observing that 
\begin{equation*}
\begin{aligned}
(1+\varepsilon\xi^2)^{-s/2}=1-\varepsilon s\frac{\xi^2}{2}+O(\varepsilon^2),
\end{aligned}
\end{equation*}
one gets from \eqref{resc} formally that
\begin{equation}\label{rescbis}
\begin{aligned}
	u_t+u_x+\varepsilon uu_x+\frac{\varepsilon s}{2} u_{xxx}=O(\varepsilon^2),
\end{aligned}
\end{equation}
suggesting that one obtains the KdV equation in the long wave limit. A similar fact has been used in \cite{MR2979975} for a class of nonlocal equations involving the Whitham equation to prove the existence of solitary wave solutions of those equations. In fact, one can check that the symbol of the linear part and nonlinearity of \eqref{eq:1.1} satisfy the {\bf Assumptions (A1)-(A3)} of \cite{MR2979975}, so that the existence result \cite[Theorem 1.2]{MR2979975} there applies in our case yielding existence of solitary wave solutions $u(x-\nu t)$ of the generalized Fornberg-Whitham equation for any $s>0$.
%We plan here to extend those results to equation \eqref{resc}.
	 
%\textcolor{red}{TO BE COMPLETED}
	
	\subsection{Global weak entropy solutions}
%The global weak entropy solution of the Burgers-Poisson has been studied in \cite{MR2084199, MR3527628}.  	
	
We first give the definition of weak entropy solution of \eqref{eq:1.1} for \(p=1\):	
\begin{definition}[\cite{MR4270781}]
	Let $u_0\in L^1(\R)\cap L^\infty(\R)$. A function $u\in C([0,\infty),L^1(\R))$ that is bounded on $\R\times [0,T]$ for every $T>0$ is called a {\it weak entropy solution} of \eqref{eq:1.1}, if 
	\begin{equation*}
\begin{aligned}
	 &\int_0^\infty\int_{-\infty}^\infty [(|u(x,t)-\lambda|\partial_t\phi(x,t)+\frac{1}{2}\mathrm{sgn}(u(x,t)-\lambda)(u^2(x,t)-\lambda^2)\partial_x\phi(x,t)\\
	&\qquad\qquad-\mathrm{sgn}(u(x,t)-\lambda)\mathcal K_s'\star u(\cdot,t)(x)\phi(x,t)]\, \diff x\diff t\geq 0
\end{aligned}
	\end{equation*}
holds for arbitrary $\lambda\in \R$ and nonnegative test functions $\phi\in \mathcal D (\R\times (0,\infty))$.
	\end{definition}
	
When $s=2$, the weak entropy solution of the Fornberg-Whitham equation \eqref{FWter} was obtained in \cite{MR4270781}:
\begin{theorem}\label{Th2}
Let $u_0\in L^1(\R)\cap L^\infty(\R)$. Then the Cauchy problem to \eqref{FWter} with the initial data \(u(x,0)=u_0(x)\)
 has a unique entropy solution $u$. For any $t>0, x, y\in \R, x<y$, \(u\) satisfies the Oleinik type inequality
\begin{equation*}
\begin{aligned}
	u(y,t)-u(x,t)\leq \bigg(\frac{1}{t}+2+2t(1+2e^t\|u_0\|_{L^1}\bigg)(y-x).
\end{aligned}
\end{equation*}
	
Moreover, the following $L^1$ stability holds: if $v$ is the weak entropy solution corresponding to the initial data $v_0\in L^1(\R)\cap L^\infty(\R)$, then 
\begin{equation*}
\begin{aligned}
	\|u(t)-v(t)\|_{L^1}\leq e^t\|u_0-v_0\|_{L^1},\quad \forall t>0.
\end{aligned}
\end{equation*}
		
\end{theorem}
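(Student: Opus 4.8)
The plan is to read \eqref{FWter} as the scalar balance law
\begin{equation*}
u_t+\left(\tfrac{u^{2}}{2}\right)_x=-K'\star u,\qquad K'(x)=-\tfrac12\,\mathrm{sgn}(x)\,e^{-|x|},\qquad \|K'\|_{L^1(\R)}=1,
\end{equation*}
and to run the vanishing viscosity plus Kruzhkov program; the only non-classical point is that the source $u\mapsto-K'\star u$ is nonlocal, but it is globally Lipschitz on $L^1(\R)$ and on $L^\infty(\R)$, which is all that is needed. First I would mollify $u_0$ to $u_0^{\varepsilon}\in C_c^\infty(\R)$ with $\|u_0^{\varepsilon}\|_{L^1}\le\|u_0\|_{L^1}$ and $\|u_0^{\varepsilon}\|_{L^\infty}\le\|u_0\|_{L^\infty}$, and solve the uniformly parabolic Cauchy problem $u^{\varepsilon}_t+u^{\varepsilon}u^{\varepsilon}_x+K'\star u^{\varepsilon}=\varepsilon u^{\varepsilon}_{xx}$, $u^{\varepsilon}(\cdot,0)=u_0^{\varepsilon}$; since the nonlocal term is a bounded lower-order perturbation, standard parabolic theory yields a global smooth solution $u^{\varepsilon}$.

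Next I would establish three a priori bounds, uniform in $\varepsilon$. An $L^1$-contraction computation gives $\frac{\mathrm d}{\mathrm dt}\|u^{\varepsilon}(t)\|_{L^1}\le\|K'\star u^{\varepsilon}(t)\|_{L^1}\le\|u^{\varepsilon}(t)\|_{L^1}$, hence $\|u^{\varepsilon}(t)\|_{L^1}\le e^{t}\|u_0\|_{L^1}$ --- already the constant in the final $L^1$-stability estimate. The maximum principle along the (viscous) characteristics, together with $\|K'\star u^{\varepsilon}\|_{L^\infty}\le\|K'\|_{L^\infty}\|u^{\varepsilon}\|_{L^1}=\tfrac12\|u^{\varepsilon}\|_{L^1}$, gives a time-local $L^\infty$ bound $\|u^{\varepsilon}(t)\|_{L^\infty}\le\|u_0\|_{L^\infty}+\tfrac12\int_0^t e^{s}\|u_0\|_{L^1}\,\mathrm ds$. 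For the one-sided Oleinik bound, differentiate the equation: using $(K'\star u)_x=K''\star u=K\star u-u$, the quantity $w:=u^{\varepsilon}_x$ satisfies $w_t+u^{\varepsilon}w_x+w^{2}=\varepsilon w_{xx}+\big(u^{\varepsilon}-K\star u^{\varepsilon}\big)$. Since $|u^{\varepsilon}-K\star u^{\varepsilon}|\le 2\|u^{\varepsilon}(t)\|_{L^\infty}=:M(t)$ is bounded on finite time intervals, the parabolic maximum principle compares $w$ from above with the spatially constant supersolution $y(t)$ solving the Riccati equation $\dot y=-y^{2}+M(t)$ with $y(0^{+})=+\infty$; this produces $u^{\varepsilon}_x(x,t)\le \tfrac1t+C\!\left(t,\|u_0\|_{L^1},\|u_0\|_{L^\infty}\right)$, and keeping track of the constants yields precisely $\tfrac1t+2+2t\big(1+2e^{t}\|u_0\|_{L^1}\big)$.

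Then I would pass to the limit $\varepsilon\to0$. The uniform $L^\infty$ bound together with the one-sided Lipschitz bound (which controls the spatial variation of $u^{\varepsilon}(t)$ on bounded sets for $t$ bounded away from $0$) and the equation itself (which controls $t\mapsto u^{\varepsilon}(t)$ in $L^1_{\mathrm{loc}}$) yield, via Helly's selection theorem, a subsequence converging in $C_{\mathrm{loc}}\big((0,\infty);L^1_{\mathrm{loc}}(\R)\big)$; the uniform $L^1$ bound then upgrades this to a limit $u\in C\big([0,\infty);L^1(\R)\big)$ that is bounded on $\R\times[0,T]$ for every $T$. Because convolution with $K'\in L^1(\R)$ is continuous on $L^1$, one has $K'\star u^{\varepsilon}\to K'\star u$, so passing to the limit in the viscous Kruzhkov inequalities $\partial_t|u^{\varepsilon}-\lambda|+\partial_x\!\big[\mathrm{sgn}(u^{\varepsilon}-\lambda)\tfrac{(u^{\varepsilon})^{2}-\lambda^{2}}{2}\big]\le-\mathrm{sgn}(u^{\varepsilon}-\lambda)\,K'\star u^{\varepsilon}+\varepsilon\,\partial_x^{2}|u^{\varepsilon}-\lambda|$ gives exactly the entropy inequality in the definition, and the Oleinik inequality for $u$ follows by letting $\varepsilon\to0$ in the pointwise bound on $u^{\varepsilon}_x$.

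Finally, uniqueness and $L^1$-stability follow from Kruzhkov's doubling of variables adapted to the nonlocal source: for two entropy solutions $u,v$ with data $u_0,v_0$, the usual symmetrized computation and test-function limit give $\frac{\mathrm d}{\mathrm dt}\|u(t)-v(t)\|_{L^1}\le\|K'\star u(t)-K'\star v(t)\|_{L^1}\le\|K'\|_{L^1}\|u(t)-v(t)\|_{L^1}=\|u(t)-v(t)\|_{L^1}$, so Gronwall yields $\|u(t)-v(t)\|_{L^1}\le e^{t}\|u_0-v_0\|_{L^1}$, and uniqueness is the case $u_0=v_0$. The main obstacle throughout is precisely the nonlocality: one must verify that the a priori estimates still close as a chain ($L^1$ bounds $L^\infty$, which bounds the Riccati coefficient $M(t)$ in the Oleinik estimate) and that the doubling-of-variables argument tolerates the nonlocal source --- both work because $K,K'\in L^1(\R)$ with $\|K'\|_{L^1}=1$, which is also the source of the sharp exponential constants. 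Squeezing the exact Oleinik constant $\tfrac1t+2+2t(1+2e^{t}\|u_0\|_{L^1})$ out of the Riccati comparison is the only genuinely computational point.
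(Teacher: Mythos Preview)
The paper does not supply a proof of this theorem at all: it is quoted verbatim from H\"ormann \cite{MR4270781} as background for the discussion of global weak entropy solutions, and the surrounding text explicitly attributes the result to that reference. So there is no ``paper's own proof'' against which to compare your proposal.

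That said, your sketch is essentially the standard route (vanishing viscosity, a priori $L^1$/$L^\infty$ bounds, Riccati comparison for the one-sided derivative, Helly compactness, Kruzhkov doubling for stability) and is in the spirit of what is carried out in the cited reference. One point to flag: your Riccati coefficient $M(t)=2\|u^{\varepsilon}(t)\|_{L^\infty}$ depends on $\|u_0\|_{L^\infty}$ through your $L^\infty$ bound, yet the Oleinik constant in the statement involves only $\|u_0\|_{L^1}$. To reproduce that exact constant you would need to bound the forcing $u^{\varepsilon}-K\star u^{\varepsilon}$ in a way that avoids $\|u_0\|_{L^\infty}$ (or else accept a constant that also depends on $\|u_0\|_{L^\infty}$); as written, your claim that ``keeping track of the constants yields precisely'' the stated expression is not fully justified by the bounds you set up. This is a bookkeeping issue rather than a structural one, and it does not affect existence, uniqueness, or the $L^1$-stability estimate, all of which your argument handles correctly.
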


It would be interesting to extend this result to the generalized Fornberg-Whitham equation \eqref{eq:1.1} for all \(s>0\). 

One can refer to \cite{MR4571864, MR2084199, MR3527628} for other related works.

	\section{Final comments}
	
	%We have exhibit in this paper weakly dispersive perturbations of the Burgers equation that allow shock formation. It would be interesting to prove that these models also display other typical  properties of the Burgers equation, for instance the global existence of weak entropy solutions.	We refer to \cite{MR4270781} for similar results for the Fornberg-Whitham equation.
	
	%Another interesting issue is that of the behavior of equation \eqref{eq:1.1} in the long wave limit. More precisely the symbol of operator $ \mathcal K _s$ is equivalent in the long  wave limit $k\to 0$ to $1-\frac{s}{2}k^2$ so that we recover the KdV dispersion. A similar situation occurs df for the Whitham equation (see \cite{MR0671107, MR4321411, MR3763731} for the derivation of this equation) and it would be interesting to check whether the existence of solitary wave solutions of the Whitham equation obtained in \cite{MR2979975, MR4531652} still hold for the fractionary KdV equation with Bessel potentials.
	
	We have shown that the Burgers equation with a dispersive perturbation based on Bessel potentials has a rather rich dynamics. It has a hyperbolic character (possibility of shock formation, existence of global weak solutions), and it displays dispersive properties: linear dispersive estimates and, in the long wave limit, a link with the KdV equation leading to  the existence of solitary waves.
	
	It has been shown in \cite{MR4201835, MR4309883} that the modified (cubic) fKdV equation with $-1<\alpha<0$ has also dispersive properties in the sense that it possesses global small  smooth solutions. It would be interesting to check if  this property still holds with a "Bessel type" dispersion and also for quadratic nonlinearities.

%\section*{Acknowledgement}	

%\textcolor{red}{Dear Professor Saut, could you add some information on your grants? NO, I have a new grant but it will start on October 1st, 2023!}

%Wang's research was supported by the Grant No. 830018 from China.

\end{document}